\documentclass[12pt]{amsart}
\usepackage{amsmath, amsthm, amssymb,cite,enumitem}
\usepackage{fullpage}
\usepackage{color}
\usepackage{hyperref}
\usepackage[makeroom]{cancel}
\usepackage[title,titletoc]{appendix}
\usepackage{amsmath}
\usepackage{comment}
\usepackage{amssymb}
\usepackage{amsrefs}
\usepackage{enumitem}
\usepackage{mathtools}
\usepackage{mathrsfs}
\newcommand{\wh}[1]{\widehat{#1}}

\newcommand{\dd}{d}

\newcommand{\dH}{\dot{H}}

\newcommand\R{\mathbb{R}}

\newcommand\N{\mathbb{N}}

\newcommand{\dx}{\,\dd x}

\newcommand{\dddt}{\,\frac{\dd}{\dd t}}
\newcommand{\eps}{\varepsilon}

\newcommand{\snorm}[1]{\lVert#1\rVert}

\newtheorem{theorem}{Theorem}[section]
\newtheorem{lemma}[theorem]{Lemma}
\newtheorem{corollary}[theorem]{Corollary}
\newtheorem{proposition}[theorem]{Proposition}

\theoremstyle{definition}

\numberwithin{equation}{section}
\begin{document}

\title{Well--posedness for the continuum Calogero--Moser equations by parabolic regularization}

\author[Z.~Yingmo]{Yingmo Zhang}
\address{School of Mathematics, Georgia Institute of Technology. 686 Cherry Street, Atlanta, GA}
\email{yzhang4029@gatech.edu}

\begin{abstract}

We prove that the focusing and defocusing continuum Calogero--Moser equations are globally well-posed in $H^s_+(\mathbb R)$ for every integer $s\geq 3$. In the focusing case, this requires the initial data to satisfy the mass condition
$$\|q_0\|_{L^2}^2<2\pi.$$
Our proof is based on a parabolic regularization of the equation, uniform \textit{a--priori} estimates for the regularized solutions, and a Bona--Smith type approximation argument.
\end{abstract}

\maketitle
\tableofcontents
%%%%%%%%%%%%%%%%%%%%%%%%%%%%%%%%%%%%%%%%%%%%%%%%%%%%%%%%%%%%%%%%%%%%%%%%%%%%%%%%%%%%%%%%%%%%%%%%%%%%%%%%

\section{Introduction}
We study the global well--posedness problem for the following two dispersive equations:
\begin{equation}\tag{CCM}\label{CCM}
i\tfrac{d}{dt} q= -q'' \pm 2iq C_+\big( |q|^2\big)' .
\end{equation}
Here and throughout the paper, the upper sign corresponds to the focusing equation, while the lower sign corresponds to the defocusing equation. The unknown function 
$$q=q(t,x):\mathbb{R}\times\mathbb{R}\longrightarrow\mathbb{C}$$
is a complex-valued function, and the prime denotes differentiation with respect to the spatial variable $x$. Also, $C_+$ denotes the Cauchy--Szeg\H{o} projection onto nonnegative Fourier frequencies, defined by
$$\widehat{C_+f}(\xi)=\mathbf{1}_{[0,\infty)}(\xi)\widehat f(\xi).$$

The solutions are sought in the Hardy--Sobolev space
$$
H^s_+(\R):=\left\{f\in H^s(\mathbb{R}):\hat{f}(\xi)=0 \text{ for }\xi<0\right\}.
$$
In particular, $H^0_+(\mathbb R)=L^2_+(\mathbb R)$. The nonnegative-frequency condition is preserved by the evolution, so $L^2_+(\mathbb R)$, and more generally $H^s_+(\mathbb R)$, form natural phase spaces for \eqref{CCM}.

The focusing and defocusing equations in \eqref{CCM} arise from two different settings. The focusing equation was formally derived in \cite{AbanovBettelheimWiegmann} as a continuum limit of the classical Calogero--Moser particle system
$$
\frac{d^2x_j}{dt^2}
=
\sum_{k\neq j}
\frac{1}{(x_j-x_k)^3}.
$$
This finite-dimensional completely integrable system describes a collection of particles interacting through an inverse-square potential. The connection with this particle model accounts for the terminology continuum Calogero--Moser equation, and is also closely related to the completely integrable structure of the focusing CCM equation.

The defocusing equation has its origin in the theory of internal waves. More precisely, it first appeared as a special case of the intermediate nonlinear Schrödinger equation introduced by Pelinovsky in \cite{Pelinovsky}. The latter is a modulation equation describing the slowly varying envelope of an approximately monochromatic internal wave propagating along the interface of a stratified two-fluid system. In its finite-depth form, the equation can be written as
$$
\partial_t u+i\partial_x^2u
=
\beta u(1+iT_h)\partial_x\bigl(|u|^2\bigr)
+i\gamma |u|^2u,
$$
where $h>0$ is the depth parameter and $T_h$ is the Fourier multiplier defined by
$$
\widehat{T_hf}(\xi)
=
-i\coth(h\xi)\widehat f(\xi).
$$
As $h\to\infty$, one formally has
$$
-i\coth(h\xi)
\longrightarrow
-i\operatorname{sgn}(\xi),
$$
so that $T_h$ converges to the Hilbert transform $H$. Since
$$
1+iH=2C_+
$$
on nonzero frequencies, the infinite-depth limit of the intermediate nonlinear Schrödinger equation, with $\gamma=0$, reduces to the defocusing CCM equation, up to the sign and normalization conventions used in \eqref{CCM}. Thus, the finite-depth intermediate nonlinear Schrödinger equation is a more general model, while the defocusing CCM equation corresponds to its infinite-depth specialization.

Our goal is to construct global solutions to \eqref{CCM} and to establish their uniqueness and continuous dependence on the initial data. Our main result is the following.
\begin{theorem}\label{main_thm}
Let $s\geq 3$ be an integer and let $q_0\in H^s_+(\mathbb R)$. 
In the focusing case, assume additionally that
\begin{equation}\label{aug7_0950}
    \|q_0\|_{L^2}^2<2\pi.
\end{equation}
Then the initial-value problem for \eqref{CCM} with $q(0)=q_0$ admits a unique global solution
$$q\in C\bigl([0,\infty);H^s_+(\mathbb R)\bigr)$$
with the compact--open topology. Moreover, for every time $T>0$, the data-to-solution map
$$q_0\longmapsto q$$
is continuous from $H^s_+(\mathbb R)$ into 
$C\bigl([0,T];H^s_+(\mathbb R)\bigr) \cap C^1\bigl([0,T];H^{s-2}_+(\mathbb R)\bigr)$.
\end{theorem}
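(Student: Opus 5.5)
The plan follows the abstract: parabolic regularization, uniform a priori bounds, and a Bona--Smith argument.

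\textbf{Regularized problem.} For $\eps>0$ consider
$$i\tfrac{d}{dt} q = -q'' - i\eps q'''' \pm 2iq C_+\big(|q|^2\big)', \qquad q(0)=q_0 .$$
Here $e^{-\eps t\partial_x^4}$ commutes with $C_+$ and preserves $H^s_+$. The nonlinearity $q\mapsto qC_+(|q|^2)'$ loses one derivative and is locally Lipschitz $H^s\to H^{s-1}$ for $s\ge 1$, by the algebra property and boundedness of $C_+$. The smoothing estimate $\|e^{-\eps t\partial^4}\partial f\|_{H^s}\lesssim (\eps t)^{-1/4}\|f\|_{H^s}$ is integrable in time, so a contraction in $C([0,T_\eps];H^s_+)$ gives a local solution $q^\eps$. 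Parabolic smoothing makes it smooth for $t>0$, and it persists as long as $\|q^\eps\|_{H^s}$ stays bounded.

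\textbf{Uniform bounds.} Next I will show that $\|q^\eps(t)\|_{H^s}\le C(T,\|q_0\|_{H^s})$ on $[0,T]$, uniformly in $\eps\in(0,1]$. First, mass is nonincreasing: the nonlinear term contributes $\pm 2\,\mathrm{Re}\int \bar q q C_+(|q|^2)' = \pm\int |q|^2 (|q|^2)'=0$, using that $(|q|^2)'$ is real and $C_+=\tfrac12(1+iH)$ on it. Second, I will use the conserved energies of CCM, which involve the Lax operator $\mathcal L = -i\partial \mp qC_+\bar q$. Rather than differentiating them along the regularized flow and controlling the error terms by $\eps\|q\|_{H^{k+2}}^2$-type dissipation, it may be easier to do a direct energy estimate on $\|\partial^k q\|_{L^2}^2$. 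There the worst term is $\int \overline{\partial^k q}\, q\,C_+(|q|^2)^{(k+1)}$, which has one derivative too many. I will handle it by a gauge or modified energy: add a lower-order correction $\mathrm{Re}\int \overline{\partial^{k-1}q}\,(\dots)$, chosen so that $\tfrac{d}{dt}$ of the modified energy is bounded by $C(\|q\|_{H^{2}})\|q\|_{H^k}^2$, while the $\eps$-terms have a good sign up to lower order. The modified energy must be coercive, i.e.\ comparable to $\|q\|_{H^k}^2$. In the focusing case this is where $\|q\|_{L^2}^2<2\pi$ enters. The $H^1$-level energy $E=\int |q'\mp iqC_+(|q|^2)|^2$ (or its analogue) controls $\|q'\|_{L^2}^2$ only under the mass threshold, via the sharp inequality $\|C_+(|q|^2)q\|_{L^2}\le \tfrac{1}{2\pi}\|q\|_{L^2}^2\|q'\|_{L^2}$. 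Since mass decreases along the regularized flow, the hypothesis persists. Gr\"onwall then closes the bounds, first at low regularity and then inductively up to integer $s\ge 3$; $s\ge3$ gives enough room for $L^\infty$ control of $q,q',q''$. I expect this step, constructing the modified energies robustly in $\eps$, to be the main obstacle.

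\textbf{Limit, uniqueness, and continuity.} I will show that $q^\eps$ is Cauchy in $C([0,T];L^2)$ as $\eps\to0$. An $L^2$ difference estimate for $w=q^\eps-q^\delta$ has no derivative loss, because the term $\bar w\, q\, C_+(\bar q w+q\bar w)'$ can be integrated by parts using the uniform $H^3$ bounds. The same estimate, with $\eps=\delta=0$, gives uniqueness. Interpolation with the uniform $H^s$ bounds yields a limit $q\in L^\infty H^s\cap C H^{s-}$ solving \eqref{CCM}. For continuity in $H^s$ and continuous dependence I will use Bona--Smith. Mollify the data as $q_{0,h}=\phi_h*q_0$, noting that $\|q_{0,h}\|_{H^{s+1}}\lesssim h^{-1}$ and $\|q_{0,h}-q_0\|_{H^{s-j}}=o(h^j)$. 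I will compare the corresponding smooth solutions in $H^s$ via the linearized energy estimate: the high-high error term is controlled by $\|w\|_{L^2\text{ or }H^{s-1}}\|q_h\|_{H^{s+1}}$, which is $o(1)$. This shows that $q$ is a uniform limit of $C([0,T];H^s)$ functions, hence continuous, and that data-to-solution is continuous into $C([0,T];H^s_+)$. Continuity into $C^1([0,T];H^{s-2}_+)$ follows from the equation, since the nonlinearity is continuous $H^s\to H^{s-1}$. Global existence is immediate since the bounds hold on every $[0,T]$.
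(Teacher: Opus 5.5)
Your overall architecture (regularize, prove uniform bounds, get an $L^2$ Cauchy estimate, then Bona--Smith) is sound, and using $-\eps\partial_x^4$ instead of the paper's $\eps\partial_x^2$ is a legitimate variant. The uniform bounds, however, have two genuine gaps.

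\textbf{First gap: the $H^1$-level inequality is false.} You invoke $\|C_+(|q|^2)q\|_{L^2}\le\frac{1}{2\pi}\|q\|_{L^2}^2\|q'\|_{L^2}$, and this does not hold. Set $f=|q|^2$, which is real. Then $C_+f=\frac12(f+iHf)$ and $\int(f+iHf)^3\,dx=0$, so $\int f(Hf)^2\,dx=\frac13\int f^3\,dx$. It follows that $\|qC_+(|q|^2)\|_{L^2}^2=\frac13\|q\|_{L^6}^6$, and your claim is equivalent to $\|q\|_{L^6}^6\le\frac{3}{4\pi^2}\|q\|_{L^2}^4\|q'\|_{L^2}^2$ on $H^1_+$.

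Equality holds at $R=\sqrt2/(x+i)$, but $R$ is not even a critical point of the quotient. For $q_\alpha=(x+i)^{-\alpha}\in H^1_+$ every norm is a Beta integral. The quotient $\rho(\alpha)$ satisfies $\rho(1)=\frac{3}{4\pi^2}$ and $(\log\rho)'(1)=\frac12$; numerically $\rho(1.1)\approx0.0785>0.0760\approx\frac{3}{4\pi^2}$. If you rescale $q_{1.1}$ to mass just below $2\pi$, then $\|qC_+(|q|^2)\|_{L^2}>\|q'\|_{L^2}$. So the triangle inequality gives no lower bound on $E_2$ near the threshold.

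The sharp mechanism sits at the $\dH^{1/2}$ level. Lemma~\ref{GL_type_original} gives $E_1\ge(1-\frac{M}{2\pi})\|q\|_{\dH^{1/2}}^2$. $H^1$ control then follows subcritically from $E_1\le\sqrt{ME_2}$ together with
$\|qC_+(|q|^2)\|_{L^2}\lesssim\|q\|_{L^\infty}\|q\|_{L^2}\|q\|_{\dH^{1/2}}\lesssim M^{3/4}\|q'\|_{L^2}^{1/2}\|q\|_{\dH^{1/2}}$.

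Note also that $E_2$ is not conserved by your regularized flow. You therefore cannot avoid differentiating it along that flow and absorbing the $\eps$-errors. The dissipation $-2\eps\|q'''\|_{L^2}^2$ from $-\eps\partial_x^4$ can do this, but it has to be done.

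\textbf{Second gap: no global $\eps$-uniform bound at the $H^2$ level.} This is the more serious problem: nothing in your plan gives an $\eps$-uniform bound at the $H^2$ level on $[0,T]$ for arbitrary $T$. Your modified-energy bound $\frac{d}{dt}\tilde E_k\le C(\|q\|_{H^2})\|q\|_{H^k}^2$ closes by Gr\"onwall only for $k\ge3$, once $\|q\|_{H^2}$ is already controlled.

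At $k=2$ it is superlinear. The term $\int|q''|^2(|q|^2)'\,dx$ requires $\|q'\|_{L^\infty}$, which $H^1$ does not control. The bound $\|q'\|_{L^\infty}^2\lesssim\|q'\|_{L^2}\|q''\|_{L^2}$ only yields $\frac{d}{dt}\|q''\|_{L^2}^2\lesssim\|q''\|_{L^2}^{5/2}$. That gives bounds on a time interval depending on $\|q_0\|_{H^2}$, hence no global solution.

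At this level you need a correction whose CCM time derivative vanishes identically, i.e.\ a conserved quantity from the Lax hierarchy. The paper does this in three steps:
\begin{itemize}
\item it differentiates $E_4$ along the regularized flow (Proposition~\ref{may6_0905});
\item it uses that $E_4$ controls $\|q\|_{\dH^2}^2$ up to powers of $\|q\|_{\dH^{1/2}}$ (Corollary~\ref{may13_1439});
\item it closes the $E_1$ and $E_4$ bounds jointly by a bootstrap with $\eps\le\eps_0(T)$ (Proposition~\ref{july23_0719}).
\end{itemize}

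\textbf{Your flagged obstacle is not the real one.} For $k\ge3$ no modified energy is needed. The term $\operatorname{Re}\int\overline{q^{(k)}}\,q\,C_+(\bar q\,q^{(k+1)})\,dx$ loses no derivative by self-adjointness of $C_+$, after writing $\bar q\,q^{(k+1)}=(\bar q\,q^{(k)})'-\bar q'q^{(k)}$. The term $\int\overline{q^{(k)}}\,q\,C_+(q\,\overline{q^{(k+1)}})\,dx$ is handled by Lemma~\ref{Commutator_type}. Your limit, uniqueness and Bona--Smith steps are fine once the uniform bounds are in place.
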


We next review some known results concerning \eqref{CCM}. In \cite{GérardLenzmann2024}, Gérard and Lenzmann proved global well-posedness for the focusing equation in $H^s_+(\mathbb R)$ for integers $s\geq 1$ under the mass condition \eqref{aug7_0950}.
They also classified the traveling waves and studied multisoliton solutions. In \cite{KLV}, Killip, Laurens, and Vi\c{s}an established global well-posedness in the scaling-critical space $L^2_+(\mathbb R)$ for the defocusing equation, and for the focusing equation under the condition \eqref{aug7_0950}.  The periodic analogue was studied by Badreddine in \cite{BadreddineGWP}, where global well-posedness was established under the name of the Calogero--Sutherland derivative nonlinear Schr\"odinger equation. 

Very recently, Chapouto, Forlano, and Laurens \cite{CFL} proved well-posedness for the intermediate nonlinear Schr\"odinger equation in $H^s(\mathbb R)$ for every $s>0$, covering the full scaling-subcritical range. For prior results, see \cite{Hadama, deMoura, deMouraPilod, BarrosdeMouraSantos,CFL2025}

For the focusing equation, Hogan and Kowalski \cite{HoganKowalski} studied the threshold for the growth of $H^s$ norms for $s>0$. Kim, Kim, and Kwon \cite{KimKimKwon} proved that there are solutions in $H^s_+(\mathbb{R})$ for every $s>0$ that blow--up in finite time and further shows that the restriction \eqref{aug7_0950} is necessary. Also, Kim and Kwon \cite{KimKwon} proved soliton resolution for both finite-time blow-up solutions and global-in-time solutions. Further results on the construction and classification of blow-up solutions were obtained in \cite{JeongKim,JeongKimKimKwon}. For the defocusing equation with a nonvanishing condition at infinity, Chen \cite{ChenNonvanishing} proved global well-posedness in a Zhidkov space.

Although sharper results are already known, the purpose of the present paper is to address the well-posedness problem for \eqref{CCM} at higher regularity. To our knowledge, parabolic regularization has not previously been applied to the CCM equations. Such an approach is of interest because it relies mainly on energy estimates and compactness arguments, and is therefore more robust. Moreover, the same type of parabolic regularization can also be formulated on $\mathbb T$, where the heat semi--group has analogous smoothing properties; see, for example, \cite[Rem.~3.5, p.~58]{ErdoganTzirakis2016}.

Our argument is inspired by the approximation method of Bona and Smith for the KdV equation \cite{BonaSmith}. However, its adaptation to \eqref{CCM} is not immediate. In the KdV setting, the uniform a priori estimates are obtained successively at the integer regularities $L^2$, $H^1$, $H^2$, and higher Sobolev spaces. This procedure relies on the infinite sequence of polynomial conserved quantities of KdV, whose leading terms control successively higher integer-order Sobolev norms. For \eqref{CCM}, after the $L^2$-norm, the next polynomial conserved quantity controls the fractional norm $\dot H^{\frac{1}{2}}$. In this step, we have the energy
$$E_1:=\snorm{q}^2_{\dH^\frac12}\mp \snorm{C_+(|q|^2)}^2_{L^2}$$
following the definition of $E_n$ in Proposition \ref{jul29_1601}. In the focusing case, we rely on a sharp inequality from \cite{GérardLenzmann2024} (see Lemma \ref{GL_type_original} below for details) in order to obtain coercivity precisely below the sharp mass threshold
$$
\|q_0\|_{L^2}^2<2\pi.
$$
The $\dot H^{\frac{1}{2}}$ estimate is coupled to an $H^2$ estimate, and the two bounds must be closed simultaneously by a bootstrap argument. Only after obtaining the uniform $H^2$ bound can we derive the higher $H^s$ estimates for integers $s\geq 3$. Much like \eqref{CCM}, the polynomial conserved quantities contain the nonlocal operator $C_+$, which prevents commuting factors of $q$ when integrating by parts.  To circumvent this, we develop a substitute in Lemma \ref{Commutator_type} in order to ensure that derivatives do not fall on the least regular factor.

Our proof follows the approximation argument introduced by Bona and Smith in \cite{BonaSmith}. Given $q_0\in H^s_+(\mathbb R)$, we regularize the initial data by setting
$$
q_{0,\varepsilon}:=P_{\leq \varepsilon^{-\frac{1}{3}}}q_0
$$
and consider the parabolically regularized equation
\begin{equation}\label{CCM2}\tag{$\text{CCM}_\eps$}
i\partial_tq_\varepsilon-i\varepsilon\partial_x^2q_\varepsilon
=
-\partial_x^2q_\varepsilon
\pm 2iq_\varepsilon C_+\partial_x\left(|q_\varepsilon|^2\right).
\end{equation}
The smoothing effect of the corresponding linear propagator allows us to establish local well-posedness for the regularized equation by a contraction-mapping argument.  
As described above, we then derive a priori estimates that are uniform in $\varepsilon$ on every compact time interval. %We first obtain $L^2$ and $\dot H^{\frac{1}{2}}$ estimates. The $\dot H^{\frac{1}{2}}$ estimate is coupled to an $H^2$ estimate, and the two bounds are closed by a bootstrap argument. Higher-order energy estimates then give uniform control in $H^s_+(\mathbb R)$ for every integer $s\geq3$. These estimates extend the regularized solutions globally.

Instead of parabolic regularization, the original argument of Bona and Smith
\cite{BonaSmith} relied on a BBM--type regularization. The analogous
regularization for \eqref{CCM} would be
\begin{equation}\label{bonasmithonccm}
    i\partial_t q
    -i\varepsilon\partial_x^2\partial_t q
    =
    -\partial_x^2q
    \pm 2iqC_+\partial_x\left(|q|^2\right).
\end{equation}
For the KdV equation, Bona and Smith \cite{BonaSmith} obtain uniform a--priori bounds by exploiting the first few conservation laws of KdV, suitably adapted to the regularized equation. For \eqref{bonasmithonccm}, however, the analogous energy argument encounters a difficulty already at the level of the first nontrivial CCM energy. More precisely, when differentiating the nonlinear part, one encounters the contribution
$$
\frac{d}{dt}(E_1+\eps\int|q''|^2 dx)=2\varepsilon\operatorname{Re}
\int
C_-\left(|q|^2\right)
C_+\left(
\overline q\,\partial_x^2\partial_tq
+
q\,\partial_x^2\partial_t\overline q
\right)\,dx.
$$
If one attempts to eliminate the time derivative by means of
\eqref{bonasmithonccm}, factors involving $\partial_x^4q$ arise. Consequently,
instead of coupling the $\dot H^{\frac12}$ estimate with an $H^2$ estimate, as in
our parabolic regularization, one would need to couple the $\dot H^{\frac12}$
estimate with an $H^4$ estimate.

This difficulty is avoided by the parabolic regularization \eqref{CCM2}. In this case, differentiating the first energy produces instead the error term
$$ \frac{d}{dt}E_1 =\varepsilon \int |q|^2  \left( \overline q\,q'' + q\,\overline q'' \right)\,dx, $$
as in \eqref{aug7_1613}, which involves only two spatial derivatives and can therefore be controlled using the $H^2$ estimate developed below.

Next, we compare two regularized solutions $q_\varepsilon$ and $q_\delta$, where $0<\delta\leq\varepsilon$, and prove that
$\left\{q_\varepsilon\right\}_{\varepsilon>0}$
is a Cauchy family in the $C\left([0,T];H^s_+(\mathbb R)\right)$ topology for every $T>0$. In contrast to the a--priori estimates, the convergence argument does not require a coupled bootstrap between different Sobolev norms. We first establish an $L^2$ estimate for the difference $q_\varepsilon-q_\delta$. This smallness is then combined with the uniform a--priori bounds and interpolation to control the higher order difference estimates directly at the integer Sobolev levels. The limit is shown to solve \eqref{CCM}. Uniqueness follows from an $L^2$ energy estimate for the difference of two solutions and Gr\"onwall's inequality.

Finally, continuous dependence on the initial data follows from the standard $\frac{\varepsilon}{3}$ argument from \cite{BonaSmith}.

The remainder of the paper is organized as follows. In Section 2, we introduce the notation and collect the preliminary estimates used throughout the paper. In Section 3, we establish local well-posedness for the regularized equation by a contraction-mapping argument. Section 4 is devoted to the a-priori estimates that are uniform in the regularization parameter and to the global existence of the regularized solutions. In Section 5, we prove that the regularized solutions form a Cauchy family, pass to the limit as $\varepsilon\to0$, and establish existence and uniqueness for \eqref{CCM}. In Section 6, we prove the continuous dependence of the solutions on the initial data.

\subsection*{Acknowledgements}
The author would like to express sincere gratitude to Dr. Thierry Laurens for his generous guidance and for the many insightful discussions throughout this project.
%%%%%%%%%%%%%%%%%%%%%%%%%%%%%%%%%%%%%%%%%%%%%%%%%%%%%%%%%%%%%%%%%%%%%%%%%%%%%%%%%%%%%%%%%%%%%%%%%%%%%%%%
\section{Preliminaries}\label{preliminaries}
Throughout the paper, we use the Fourier transform convention
 $$
    \wh{f}(\xi) = \frac{1}{\sqrt{2\pi}} \int e^{-ix\xi} f(x)\,dx ,
$$
 With this choice, we have
    \[
    \wh{fg}(\xi) = \frac{1}{\sqrt{2\pi}} [\wh{f}*\wh{g}](\xi)
    \quad\text{and}\quad
    \langle f,g \rangle = \langle \wh{f} , \wh{g} \rangle .
    \]
We also denote $\langle f,g\rangle:=\int \overline{f}g\,dx $ the $L^2$--inner product of the functions $f,g$ on the line.

In this section, we collect two estimates associated with the Hardy-space structure of the Cauchy--Szeg\H{o} projection. The first is a sharp product estimate due to G\'erard and Lenzmann \cite{GérardLenzmann2024}, which will be used to obtain the coercivity of the first \eqref{CCM} energy below the mass threshold $2\pi$. 
\begin{lemma}\label{GL_type_original}
    For all $f\in L^2_+$ and $g\in \dH^{\frac{1}{2}}$, we have
        \begin{equation}\label{GL_type_inequa_original}
            \left\|C_+(\overline{f}g)\right\|_{L^2}\leq \frac{1}{\sqrt{2\pi}}\left\|f\right\|_{L^2}\left\||\partial|^{\frac{1}{2}}g\right\|_{L^2}.            
        \end{equation}    
\end{lemma}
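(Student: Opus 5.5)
The plan is a direct computation on the Fourier side: the constraint $\hat f\subset[0,\infty)$ forces only the frequencies $\zeta\geq\xi$ of $g$ to contribute at output frequency $\xi\geq0$. Cauchy--Schwarz in the convolution variable, followed by Fubini, then produces exactly the weight $|\zeta|$ defining the $\dot H^{\frac12}$ norm.

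\emph{Step 1 (Fourier formula).} I first take $f\in L^2_+$ and $g$ Schwartz, and remove this restriction on $g$ at the end by density. Since $\widehat{\overline f}(\zeta)=\overline{\hat f(-\zeta)}$, the product convention of Section~\ref{preliminaries} gives
\[
\widehat{\overline f g}(\xi)=\frac{1}{\sqrt{2\pi}}\int_{\mathbb R}\overline{\hat f(\eta)}\,\hat g(\xi+\eta)\,d\eta
=\frac{1}{\sqrt{2\pi}}\int_{0}^{\infty}\overline{\hat f(\eta)}\,\hat g(\xi+\eta)\,d\eta ,
\]
where the second equality uses that $\hat f$ vanishes on $(-\infty,0)$. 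By Plancherel, $\|C_+(\overline f g)\|_{L^2}^2=\int_0^\infty|\widehat{\overline f g}(\xi)|^2\,d\xi$.

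\emph{Step 2 (Cauchy--Schwarz).} For each $\xi\geq 0$, Cauchy--Schwarz in $\eta$ gives
\[
|\widehat{\overline f g}(\xi)|^2\leq \frac{1}{2\pi}\|\hat f\|_{L^2}^2\int_0^\infty|\hat g(\xi+\eta)|^2\,d\eta=\frac{1}{2\pi}\|f\|_{L^2}^2\int_\xi^\infty|\hat g(\zeta)|^2\,d\zeta .
\]

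\emph{Step 3 (Fubini).} Integrating over $\xi\in(0,\infty)$ and exchanging the order of integration by Tonelli,
\[
\int_0^\infty\!\!\int_\xi^\infty|\hat g(\zeta)|^2\,d\zeta\,d\xi=\int_0^\infty \zeta\,|\hat g(\zeta)|^2\,d\zeta\leq\int_{\mathbb R}|\zeta|\,|\hat g(\zeta)|^2\,d\zeta=\big\||\partial|^{\frac12}g\big\|_{L^2}^2 .
\]
Combining Steps 1--3 gives \eqref{GL_type_inequa_original} for Schwartz $g$. Note that $g$ need not lie in $L^2_+$: negative frequencies of $g$ never enter.

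\emph{Step 4 (density).} The only delicate point is the extension to general $g\in\dot H^{\frac12}$, where $\overline f g$ must be given a meaning. I would define $C_+(\overline f g)$ through the Fourier formula above. The integral converges absolutely for a.e.\ $\xi>0$, because $\int_\xi^\infty|\hat g|^2\,d\zeta\leq \xi^{-1}\||\partial|^{\frac12}g\|_{L^2}^2<\infty$. The bilinear bound just proved then shows that this definition agrees with the continuous extension from Schwartz $g$, which are dense in $\dot H^{\frac12}$. Hence the inequality persists in the limit.
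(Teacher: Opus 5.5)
Your proof is correct and follows essentially the same route as the paper. Both write $\widehat{C_+(\overline f g)}(\xi)=\frac{1}{\sqrt{2\pi}}\int_0^\infty \overline{\hat f(\eta)}\,\hat g(\xi+\eta)\,d\eta$ for $\xi\geq 0$, apply Cauchy--Schwarz in $\eta$, and use Tonelli to produce $\int_0^\infty \zeta\,|\hat g(\zeta)|^2\,d\zeta$. Your Step 4 addresses how $C_+(\overline f g)$ should be interpreted for general $g\in\dot H^{\frac12}$, a point the paper passes over; once you adopt the Fourier-side definition, Steps 2--3 apply verbatim to any measurable $\hat g$ with $\int|\zeta|\,|\hat g(\zeta)|^2\,d\zeta<\infty$, so the density argument is not actually needed.
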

\begin{proof}
    Applying the Fourier transform, it follows that
    \begin{align*}
        \widehat{C_+(\bar{f}g)}(\xi)&=\mathbf{1}_{\xi\geq 0}\widehat{\bar{f}g}(\xi)\\
        &=\frac{1}{\sqrt{2\pi}}\int^\infty_0 \widehat{\bar{f}}(\xi-\eta)\widehat{g}(\eta)\ d\eta\\
        &=\frac{1}{\sqrt{2\pi}}\int^\infty_0 \overline{\widehat{f}(\eta-\xi)}\widehat{g}(\eta)\ d\eta.
    \end{align*}
    The change of variable in convolution gives
    $$\widehat{C_+(\overline{f}g)}(\xi)=\frac{1}{\sqrt{2\pi}}\int^\infty_0\widehat{g}(\xi+\eta)\overline{\widehat{f}(\eta)}\ d\eta\quad \text{for}\quad \xi\geq 0.$$
    By Cauchy-Schwarz we have
    $$ \left|\widehat{C_+(\overline{f}g)}(\xi)\right|^2 \leq \frac{1}{2\pi} \left( \int_0^\infty \left|\widehat{f}(\eta)\right|^2\,d\eta \right) \left( \int_0^\infty \left|\widehat{g}(\xi+\eta)\right|^2\,d\eta\right),$$
    which implies that
    $$\int^\infty_0\left|\widehat{C_+(\overline{f}g)}(\xi)\right|^2 d \xi\leq \frac{1}{2\pi}\left(\int^\infty_0\int^\infty_0 \left|\widehat{g}(\xi+\eta)\right|^2d\xi d\eta\right)\left(\int^\infty_0\left|\widehat{f}(\eta)\right|^2d\eta\right).$$
    Set, $\zeta\coloneqq \xi+\eta$, for $\eta\in [0,\infty)$, by Tonelli's theorem it follows that
    $$\int^\infty_0\zeta\left|\widehat{g}(\zeta)\right|^2d\zeta=\int^\infty_0\int^\infty_0 \left|\widehat{g}(\xi+\eta)\right|^2d\xi d\eta.$$
   Hence, by Plancherel's theorem,
    $$
    \|C_+(\overline{f}g)\|_{L^2}^2
    \leq
    \frac{1}{2\pi}
    \|f\|_{L^2}^2
    \bigl\||\partial|^{\frac12}g\bigr\|_{L^2}^2.
    $$
    Thus, taking square roots yields \eqref{GL_type_inequa_original}.
\end{proof}
We next present a lemma of an integration-by-parts identity in Fourier space, which will be used repeatedly. This allows us to redistribute derivatives in the nonlinear terms without losing regularity. 
\begin{lemma}\label{Commutator_type}
Let $f,g \in H^{\frac{3}{2}}_+(\mathbb{R})$. Then the following identity holds:
\begin{equation}\label{lemma_cheatcode}
\int \overline{f}\,g\,C_+(g\,\overline{f}')\dx
= - \int \overline{f}\,g'\,C_+(g\,\overline{f})\dx
- \int \overline{f}\,g\,C_+(g'\,\overline{f})\dx.
\end{equation}
\end{lemma}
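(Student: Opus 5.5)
The plan is to verify \eqref{lemma_cheatcode} on the Fourier side, treating each of the three integrals as a quadrilinear form in $\wh f,\wh f,\wh g,\wh g$ and comparing multipliers. I will first take $f,g$ Schwartz with $\wh f,\wh g$ supported in $[0,\infty)$, and extend to $H^{3/2}_+$ by density at the end.

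\emph{Step 1: a Fourier formula for $\int A\,C_+(B)\dx$.} By Plancherel,
$$\int A\,C_+(B)\dx=\int_0^\infty \wh A(-\xi)\,\wh B(\xi)\,d\xi .$$
Here $A=\overline f g$ and $B$ is one of $g\overline f'$, $g'\overline f$, $g\overline f$ (for the first right-hand term, $A=\overline f g'$ and $B=g\overline f$).

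\emph{Step 2: expand in frequency variables.} I would write each $\overline f$ factor at frequency $-\alpha_j$ and each $g$ factor at frequency $\beta_j$, with $\alpha_j,\beta_j\ge0$. The constraints are $\beta_1-\alpha_1=-\xi$ (outer product) and $\beta_2-\alpha_2=\xi\ge0$ (inside $C_+$). All three integrals then share the same integrand
$$\overline{\wh f(\alpha_1)}\,\wh g(\beta_1)\,\wh g(\beta_2)\,\overline{\wh f(\alpha_2)}$$
over the same region. They differ only by a multiplier determined by where the derivative falls:
\begin{itemize}
\item $-i\alpha_2$ for the left-hand side;
\item $+i\beta_1$ for the first term moved to the left, $+\int\overline f g'C_+(g\overline f)\dx$;
\item $+i\beta_2$ for the second term moved to the left, $+\int\overline f gC_+(g'\overline f)\dx$.
\end{itemize}

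\emph{Step 3: sum the multipliers (the main obstacle).} Proving \eqref{lemma_cheatcode} amounts to showing that the total multiplier $i(\beta_1+\beta_2-\alpha_2)$ integrates to zero against this integrand. Using the constraints, $\beta_2-\alpha_2=\xi$ and $\beta_1+\xi=\alpha_1$, so the total multiplier is exactly $i\alpha_1$. The bookkeeping therefore reduces the identity to
$$\int \overline f'\,g\,C_+(g\overline f)\dx=0 .$$
The three-term identity is just "Leibniz rule for $\partial_x$ of the whole integrand", and this is the fourth term it generates.

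I expect establishing this vanishing to be the real difficulty. I would try two routes first:
\begin{itemize}
\item Use $gC_+(h)=C_+(gC_+h)$ for $g\in L^2_+$, combined with the fact that $\int \overline{a}\,b\dx$ pairs $L^2_-$ against $L^2_+$ only through zero frequency.
\item Look for an antisymmetry of the region $\{\beta_1-\alpha_1=-\xi,\ \beta_2-\alpha_2=\xi\ge0\}$ under $(\alpha_1,\beta_1)\leftrightarrow(\alpha_2,\beta_2)$.
\end{itemize}
I am not confident that the fourth term vanishes for general $f,g$: the multiplier $\alpha_1$ is nonnegative and the swap maps $\xi\ge0$ to $\xi\le0$. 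If it does not vanish, the correct statement should carry this extra term, or it should be symmetrized by taking real parts. I would check that version against the way the lemma is used later.

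\emph{Step 4: extend by density.} Once the identity holds for Schwartz data, I would pass to general $f,g\in H^{3/2}_+$ by density. Each term is bounded using Sobolev embedding $H^{3/2}\hookrightarrow W^{1,\infty}$ and the $L^2$-boundedness of $C_+$; for example,
$$\Bigl|\int\overline f g\,C_+(g\overline f')\dx\Bigr|\le \|f\|_{L^2}\|g\|_{L^\infty}^2\|f'\|_{L^2}.$$
So all three sides are continuous in $H^{3/2}$, which gives the identity in general.
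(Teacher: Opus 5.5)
Your Fourier bookkeeping and your reduction in Step 3 are correct, and so is your suspicion. The four terms sum to zero by the Leibniz rule, so the stated identity is equivalent to $\int \overline{f}'\,g\,C_+(g\overline{f})\dx=0$. That integral does not vanish in general, so no argument can close Step 3: the lemma, as stated, is false.

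For a concrete counterexample, take $f=g=(x+i)^{-1}\in H^\infty_+$. Then $g\overline f=(1+x^2)^{-1}$, $C_+(g\overline f)=\frac{i}{2}(x+i)^{-1}$, $\overline f'=-(x-i)^{-2}$, and
\[
\int \overline{f}'\,g\,C_+(g\overline{f})\dx=-\frac{i}{2}\int\frac{dx}{(1+x^2)^2}=-\frac{i\pi}{4}\neq0 .
\]
Equivalently, the left-hand side of the lemma equals $-\frac{i\pi}{8}$, while the right-hand side equals $-\frac{3i\pi}{8}$. This is not special to the example. In your variables the offending term carries the multiplier $-i\alpha_1$ with $\alpha_1\ge0$. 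So whenever $\wh f,\wh g\ge0$, the term is $-i$ times a nonnegative number, strictly positive unless the integrand vanishes a.e.

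The paper's proof reaches the identity only through a sign error. Since $\overline{\wh{f'}(\sigma)}=-i\sigma\,\overline{\wh f(\sigma)}$, the multiplier attached to $\frac12\int\bigl(\overline f g C_+(g\overline f')-\overline f' g C_+(g\overline f)\bigr)\dx$ is $i\eta+i\sigma$, not $i\eta-i\sigma$. With the correct sign, that computation just reproduces the four-term Leibniz identity, and nothing cancels.

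Your fallback of taking real parts does not rescue this identity either. Every term is quadratic in $g$ and in $\overline f$, so replacing $g$ by $e^{i\pi/4}g$ multiplies all of them by $i$; the example above then gives $\int\overline f' gC_+(g\overline f)\dx=\frac{\pi}{4}$, a nonzero real number.

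What the later energy estimates actually need is a bound on $\Re\int\overline u\,q\,C_+(\overline q\,u')\dx$. Note that the conjugates sit differently there than in the lemma. That bound follows without the lemma. Write $h=\overline q u$ and $\overline q u'=h'-\overline q'u$, and use that $C_+$ is an orthogonal projection commuting with $\partial_x$:
\[
\Re\int\overline u\,q\,C_+(\overline q\,u')\dx=\Re\bigl\langle C_+h,(C_+h)'\bigr\rangle-\Re\bigl\langle C_+h,C_+(\overline q'u)\bigr\rangle=-\Re\bigl\langle C_+h,C_+(\overline q'u)\bigr\rangle .
\]
The right-hand side is bounded by $\|q\|_{L^\infty}\|q'\|_{L^\infty}\|u\|_{L^2}^2$.
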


\begin{proof}
Note that \eqref{lemma_cheatcode} is not merely an application of integration by parts, since we need to avoid the derivative hitting $f$ in the later proof. However, although the traditional integration by parts is not working, we still can employ the trick in Fourier space. Here, we have
    \begin{align*}
        \int \overline{f}\,g\,C_+(g\,\overline{f})\dx&=\langle f\overline{g},C_+(g\overline{f})\rangle\\
        &=\langle \widehat{f\overline{g}},\widehat{C_+(g\overline{f})}\rangle\\
        &= \int_{\xi\geq 0}\overline{\widehat{f\overline{g}}(\xi)}\cdot \widehat{g\overline{f}}(\xi)\,\dd\xi.
    \end{align*}
    Notice that for $\overline{\widehat{f\overline{g}}(\xi)}$, we have
    $$\overline{\widehat{f\overline{g}}(\xi)}=\frac{1}{\sqrt{2\pi}}\int \overline{\widehat{\overline{g}}(\xi-\sigma)\widehat{f}(\sigma)}\,\dd\sigma=\frac{1}{\sqrt{2\pi}}\int \widehat{g}(\sigma-\xi)\overline{\widehat{f}(\sigma)}\,\dd\sigma$$
    and for $\widehat{g\overline{f}}(\xi)$ we have,
    $$\widehat{g\overline{f}}(\xi)=\frac{1}{\sqrt{2\pi}}\int \widehat{g}(\xi-\eta)\widehat{\overline{f}}(\eta)\,\dd \eta=\frac{1}{\sqrt{2\pi}}\int \widehat{g}(\xi-\eta)\overline{\widehat{f}(-\eta)}\,\dd \eta.$$
    Therefore, we have 
    \begin{equation}\label{1/2IBP_cheatcode}
         \int \overline{f}\,g\,C_+(g\,\overline{f})\dx
         =\frac{1}{2\pi}\iiint \overline{\widehat{f}(\sigma)}\widehat{g}(\sigma-\xi)\widehat{g}(\xi-\eta)\overline{\widehat{f}(-\eta)}\,\mathbf{1}_{\xi\geq 0}\,\dd\xi\,\dd\eta\,\dd\sigma.
    \end{equation}

    On the other hand, integration by parts yields
    \begin{align*}
        \int \overline{f}gC_+(g\overline{f}')\dx&=\frac{1}{2}\int \overline{f}gC_+(g\overline{f}')- \overline{f}'gC_+(g\overline{f})-\overline{f}g'C_+(g\overline{f})-\overline{f}gC_+(g'\overline{f})\dx.
    \end{align*}
    Comparing this with \eqref{lemma_cheatcode}, we see that we must remove the first two terms on the right-hand side above.
    By \eqref{1/2IBP_cheatcode}, we have
    \begin{align*}
         \frac{1}{2}\int \overline{f}&gC_+(g\overline{f}')- \overline{f}'gC_+(g\overline{f})\dx\\&=\frac{1}{4\pi}\iiint \overline{\widehat{f}(\sigma)}\widehat{g}(\sigma-\xi)\widehat{g}(\xi-\eta)\overline{\widehat{f}(-\eta)}\big[i\eta-i\sigma\big]\,\mathbf{1}_{\xi\geq 0}\,\dd\xi\,\dd\eta\,\dd\sigma\\
        &=\frac{1}{4\pi}\iiint \overline{\widehat{f}(\sigma)}\widehat{g}(\sigma-\xi)\widehat{g}(\xi-\eta)\overline{\widehat{f}(-\eta)}\big[-i(\xi-\eta)-i(\sigma-\xi)\big] \cdot \mathbf{1}_{\xi\geq 0}\,\dd\xi\,\dd\eta\,\dd\sigma\\
        &=-\frac{1}{2}\int \overline{f}gC_+(g'\overline{f})+\overline{f}g'C_+(g\overline{f})\dx.
    \end{align*}
    Therefore, 
    $$\int \overline{f}\,g\,C_+(g\,\overline{f}')\dx
= - \int \overline{f}\,g'\,C_+(g\,\overline{f})\dx
-\int \overline{f}\,g\,C_+(g'\,\overline{f})\dx.$$
\end{proof}

\begin{comment}
\section{CCM and CCM$_\varepsilon$}

\begin{equation}\tag{CCM$_\varepsilon$}\label{CCM2}
i\tfrac{d}{dt} q-i\varepsilon q_{xx}= -q'' \pm 2iq C_+\big( |q|^2\big)' .
\end{equation}
The unknown field $q(t,x)$ is a complex-valued function of $x\in\R$.  We further demand that $q(t)$ belongs to the Hardy space
$$
L^2_+(\R) := \{f\in L^2(\R):\, \widehat{f}(\xi) = 0 \text{ for } \xi<0\} .
$$
The operator $C_+$ appearing in the nonlinearity of (CCM$_\varepsilon$) denotes the Cauchy--Szeg\H o projection from $L^2(\R)$ onto $L^2_+(\R)$.%  Our convention for the Fourier transform is given in .
\end{comment}

%%%%%%%%%%%%%%%%%%%%%%%%%%%%%%%%%%%%%%%%%%%%%%%%%%%%%%%%%%%%%%%%%%%%%%%%%%%%%%%%%%%%%%%%%%%%%%%%%%%%%%%%
\section{CCM$_\varepsilon$ local well-posedness}

In this section, we establish the local well-posedness of the regularized equation \eqref{CCM2} in $H^s_+(\R)$ for $s\geq 2$.
\begin{proposition}\label{CCM_eps_lwp}
    Fix an integer $s\geq 2$.  Given $A>0$, there exists $T>0$ so that for any initial data $q(0)\in H^s_+$ satisfying $\| q(0) \|_{H^s} \leq A$, there exists a unique solution $q(t) \in C([0,T];H^s_+)$ to \eqref{CCM2}.  Moreover, the map $(t,q(0))\mapsto q(t)$ is jointly continuous from $[0,T]\times H^s_+$ into $H^s_+$. 
\end{proposition}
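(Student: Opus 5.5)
We will run a contraction-mapping argument for the Duhamel formulation of \eqref{CCM2}, exploiting the parabolic smoothing of the linear propagator. Writing \eqref{CCM2} as
$$
\partial_t q = (\varepsilon + i)\partial_x^2 q \pm 2\, q\, C_+\partial_x\bigl(|q|^2\bigr),
$$
we set $S_\varepsilon(t):=e^{t(\varepsilon+i)\partial_x^2}$, whose Fourier multiplier is $e^{-t(\varepsilon+i)\xi^2}$. This multiplier preserves the frequency support, so $S_\varepsilon(t)$ maps $H^s_+$ to $H^s_+$. We consider the map
$$
\Phi(q)(t):=S_\varepsilon(t)q(0)\pm 2\int_0^t S_\varepsilon(t-\tau)\Bigl[q\,C_+\partial_x\bigl(|q|^2\bigr)\Bigr](\tau)\,d\tau
$$
on the ball $X_T:=\{q\in C([0,T];H^s_+):\sup_{t}\|q(t)\|_{H^s}\le 2A\}$ with the sup-in-time $H^s$ metric.

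First, the multiplier bound $\sup_\xi |\xi| e^{-\varepsilon t\xi^2}\lesssim (\varepsilon t)^{-1/2}$ gives the smoothing estimate
$$
\|S_\varepsilon(t)\partial_x F\|_{H^s}\lesssim (\varepsilon t)^{-1/2}\|F\|_{H^s},
$$
together with $\|S_\varepsilon(t)f\|_{H^s}\le\|f\|_{H^s}$.

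Second, we write the nonlinearity as $\partial_x$ of something plus a lower-order remainder:
$$
q\,C_+\partial_x(|q|^2)=\partial_x\bigl(q\,C_+(|q|^2)\bigr)-q'\,C_+(|q|^2).
$$
Since $H^s$ is an algebra for $s\ge2>1/2$ and $C_+$ is bounded on $H^s$, we get
$$
\|q\,C_+(|q|^2)\|_{H^s}\lesssim\|q\|_{H^s}^3,\qquad \|q'C_+(|q|^2)\|_{H^{s-1}}\lesssim\|q\|_{H^s}^3 .
$$
The second term lies only in $H^{s-1}$, so we write it as $\langle\partial_x\rangle\langle\partial_x\rangle^{-1}$ applied to an $H^s$ function. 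The multiplier $\langle\xi\rangle e^{-\varepsilon t\xi^2}$ is bounded by $C(1+(\varepsilon t)^{-1/2})$. Alternatively, one can simply note that the whole nonlinearity lies in $H^{s-1}$ with norm $\lesssim\|q\|_{H^s}^3$, which is legitimate since $C_+\partial_x(|q|^2)\in H^{s-1}$ and $H^{s-1}$ is an algebra for $s-1\ge1$. Either way, the one-derivative smoothing yields
$$
\Bigl\|\int_0^t S_\varepsilon(t-\tau)N(q)\,d\tau\Bigr\|_{H^s}\lesssim \int_0^t\bigl(1+(\varepsilon(t-\tau))^{-1/2}\bigr)\,d\tau\,\sup_\tau\|q\|_{H^s}^3\lesssim (T+\varepsilon^{-1/2}T^{1/2})\,A^3 .
$$
By trilinearity, the difference satisfies $\|N(q)-N(\tilde q)\|_{H^{s-1}}\lesssim(\|q\|_{H^s}^2+\|\tilde q\|_{H^s}^2)\|q-\tilde q\|_{H^s}$. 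Choosing $T=T(A,\varepsilon,s)$ small makes $\Phi$ a contraction on $X_T$. Note that $T$ is allowed to depend on $\varepsilon$, which is acceptable since the statement is for fixed $\varepsilon$; the uniform-in-$\varepsilon$ control comes later, from the a priori estimates.

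Third, we verify time continuity. The continuity of $t\mapsto S_\varepsilon(t)f$ in $H^s$ follows from dominated convergence on the Fourier side. The Duhamel term is continuous because the singular kernel $(t-\tau)^{-1/2}$ is integrable; near $t_0$ one splits the integral into a short piece, which is small by this integrability, and the rest, which converges by strong continuity of $S_\varepsilon$.

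Fourth, we handle uniqueness and continuous dependence. Uniqueness in all of $C([0,T];H^s_+)$, not just in the ball, follows by applying the same difference estimate on a short interval where both solutions are bounded and iterating. Joint continuity of $(t,q(0))\mapsto q(t)$ follows from the Lipschitz dependence of the fixed point on $q(0)$: $\Phi$ is Lipschitz in $q(0)$ with constant $1$, so $\sup_t\|q(t)-\tilde q(t)\|_{H^s}\le 2\|q(0)-\tilde q(0)\|_{H^s}$. Combined with the continuity in $t$ for each fixed datum, this gives joint continuity.

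The main point requiring care is the derivative loss in the nonlinearity. It is overcome by the $(\varepsilon t)^{-1/2}$ smoothing, which is integrable in time, so the only delicate step is the product estimate for $N(q)$ in $H^{s-1}$ and its difference version. Those estimates are routine given the algebra property for $s\ge2$.
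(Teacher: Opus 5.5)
Your proposal is correct and follows essentially the same route as the paper: a Duhamel contraction on a ball in $C([0,T];H^s_+)$. In both, the nonlinearity is placed in $H^{s-1}$ via the algebra property, the derivative loss is absorbed by the integrable $(1+(\varepsilon t)^{-1/2})$ parabolic smoothing (the paper's constant $\sqrt{C_{\varepsilon,t}}$), and $T$ is allowed to depend on $\varepsilon$. You also supply three points that the paper's proof leaves implicit: time-continuity of the Duhamel term, uniqueness in the full class (not just the ball), and the Lipschitz dependence on the data that gives joint continuity.
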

\begin{proof}
    We will first write a Duhamel's formula. Here, we have 
    $$q_t=(\varepsilon+i)q''\pm 2qC_+(\vert q\vert^2)'.$$
    By applying Duhamel's principle it gives
    $$q(t)=\mathcal{S}_\varepsilon(t)q(0)\pm  2\int^t_0 \mathcal{S}_\varepsilon (t-t')\cdot[qC_+(\vert q\vert^2)'](t')\,dt',$$
    where we define the linear flow operator
    
    $$\mathcal{S}_\varepsilon(t)f=\left[e^{-(\varepsilon+i)\xi^2t}\hat{f}(\xi)\right]^\vee.$$
    Let $q\in H^s_+$ and define a mapping $\Phi:([0,T];H^s_+(\R))\to([0,T];H^s_+(\R))$, we have
    $$\Phi(q)(t)=\mathcal{S}_\varepsilon(t)q(0)\pm 2\int^t_0 \mathcal{S}_\varepsilon (t-t')\cdot[qC_+(\vert q\vert^2)'](t')\,dt'\, .$$
    We first show that the linear flow operator $\mathcal{S}_\varepsilon$ is mapping $H^{s-1}_+$ to $H^s_+.$ Let, $m_\eps:=e^{-(\eps+i)\xi^2 t}$ be the Fourier multipliers of the linear flow operator $S_\eps$. We have the modulus of the Fourier multipliers, $\vert m_\eps(\xi,t)\vert =e^{-\varepsilon\xi^2t}\leq 1$ and we have
    \begin{align*}
    \|\mathcal S_\varepsilon(t)f\|_{H^{s}_+}^{2}
    &=\int_{0}^{\infty}(1+\xi^{2})^{s}\;
       e^{-2\varepsilon\xi^{2}t}\,
       |\hat f(\xi)|^{2}\,d\xi\\
    &\le\int_{0}^{\infty}(1+\xi^{2})e^{-2\varepsilon\xi^2 t}\,|\hat f(\xi)|^{2}(1+\xi^2)^{s-1}\,\dd\xi\\
    &\lesssim C_{\varepsilon,t}\left\|f\right\|_{H^{s-1}_+}^{2},
    \end{align*}
    where we have $C_{\varepsilon,t}=\max \left\{1, \frac{e^{2\varepsilon t-1}}{2\varepsilon t}\right\}.$
     For the nonlinearity, by noticing that $H^{s-1}$ is an algebra when $s\geq 2$, it follows that
      \begin{align*}
        \left\|\int^t_0\mathcal{S}_{\varepsilon}(t-t')\left[qC_+(\vert q\vert^2)'\right](t')\,\dd t'\right\|_{H^s_+}&\leq \int^t_0 \snorm{S_\eps (t-t')\left[qC_+(|q|^2)'(t')\right]}dt'\\
         &\lesssim \int^t_0 \sqrt{C_{\eps,t-t'}} \left\|qC_+\bigl(|q|^2\bigr)'\right\|_{H^{s-1}_+}(t')\;dt'\\
         &\lesssim \int^t_0 \sqrt{C_{\eps,t-t'}} \|q(t')\|_{H^s_{+}}^3dt'\\
         &\leq C\left(\int_0^t \sqrt{C_{\eps,t-t'}}    dt'\right)\cdot \|q(t')\|_{H^s_{+}}^3\\
         &=: \Theta_\eps (t)\cdot \|q\|_{C([0,T];H^s_{+})}^3
    \end{align*}
    We next show that the constant $\Theta_\eps(t)$ is finite by employing a change of variables. Say, $\tau=t-t'$ and therefore we have 
    $$\Theta_\eps(t)=C\int^t_0 \sqrt{C_{\eps,\tau}}\,d\tau.$$
    By the previous computation of $C_{\eps,t}$, we have $$\sqrt{C_{\eps,\tau}}=\max \left\{1,\frac{e^{\eps\tau-\frac{1}{2}}}{\sqrt{2\eps\tau}}\right\}\leq 1+\frac{e^{\eps\tau-\frac{1}{2}}}{\sqrt{2\eps\tau}}.$$
    Hence, 
    \begin{align*}
        \Theta_\eps(t)&\lesssim \int^t_0 1+\frac{e^{\eps\tau-\frac{1}{2}}}{\sqrt{2\eps\tau}}d\tau\\
        &= t+\frac{1}{\sqrt{2\eps}}\int^t_0 e^{\eps \tau-\frac12}\tau^{-\frac{1}{2}}d\tau. 
    \end{align*}
    Since $0\leq \tau\leq t$, we have
    $$e^{\eps \tau-\frac12}\leq e^{\eps t-\frac12}.$$
    Thus, 
    $$\begin{aligned}
    \Theta_{\varepsilon}(t)
    &\lesssim t+\frac{e^{\varepsilon t-\frac{1}{2}}}{\sqrt{2\varepsilon}}
    \int_0^t \tau^{-\frac{1}{2}}\,d\tau\\
    &=t+\frac{e^{\varepsilon t-\frac{1}{2}}}{\sqrt{2\varepsilon}}
    \cdot 2\sqrt{t}\\
    &\lesssim t+e^{\varepsilon t-\frac{1}{2}}
    \sqrt{\frac{2t}{\varepsilon}}\\
    &<\infty
    \end{aligned}$$
    Also, the constant $\Theta_\eps(t)$ goes to $0$ as $t\to 0^+$. Hence,
    \begin{align*}
        \| \Phi(q) \|_{C([0,T]; H^s_+)} \leq \| q_0 \|_{H^s_+} + \Theta_\eps(T) \| q \|_{C([0,T]; H^s_+)}^3.
    \end{align*}
    Set $R:=2A$ and since $\Theta_\eps(T)\to 0$ as $T\to 0^+$, we may choose a time $T$ sufficiently small so that 
    $$\Theta_\eps(T) \leq \frac{1}{6R^2}.$$
    Then, for every $q\in B_R$, we have $$\begin{aligned}\|\Phi(q)\|_{C([0,T];H^s_+)}&\leq\|q_0\|_{H^s_+}+\Theta_\varepsilon(T)\|q\|_{C([0,T];H^s_+)}^3\\
    &\leq\frac{R}{2}+\Theta_\varepsilon(T)R^3\\
    &\leq\frac{R}{2}+\frac{R}{2}=R.
    \end{aligned}$$
    We further employ the contracting mapping theorem, for any $q_1,q_2\in B_{2\|q_0\|_{H^s_+}}$ we have:   
    \begin{align*}
        \|\Phi(q_1)&-\Phi(q_2)\|_{C([0,T]; H^s_+)}
        \\ &\leq \Theta_\eps(T)\left(\|q_1\|^2_{C([0,T]; H^s_+)} +\|q_1\|_{C([0,T]; H^s_+)} \|q_2\|_{C([0,T]; H^s_+)} +\|q_2\|^2_{C([0,T]; H^s_+)} \right)\\
        &\phantom{==}\cdot \|q_1-q_2\|_{C([0,T]; H^s_+)} .
    \end{align*}
    Since $q_1,q_2\in B_R$, it follows that
    $$\|q_1\|_{C([0,T]; H^s_+)} ,\|q_2\|_{C([0,T]; H^s_+)} \leq R ,$$
    and so
    \begin{align*}
        \|\Phi(q_1)-\Phi(q_2)\|_{C([0,T]; H^s_+)} &\leq 3\Theta_\eps (T)R^2\|q_1-q_2\|_{C([0,T]; H^s_+)}\\
        &\leq \frac12\|q_1-q_2\|_{C([0,T]; H^s_+)} .
    \end{align*}    
    we see that $\Phi$ will be a strict contraction on $H^s$, applying the contraction mapping theorem, we obtain a fixed point to $\Phi$, which proved the well-posedness of \eqref{CCM2} in $H^s$.
\end{proof}
%%%%%%%%%%%%%%%%%%%%%%%%%%%%%%%%%%%%%%%%%%%%%%%%%%%%%%%%%%%%%%%%%%%%%%%%%%%%%%%%%%%%%%%%%%%%%%%%%%%%%%%%

\section{CCM$_\varepsilon$ a-priori estimates}

In this section, we establish a-priori estimates for solutions to \eqref{CCM2}. For $\eps\in (0,1]$, let 
\begin{equation} 
q_\eps (t) \quad\text{denote the solution to \eqref{CCM2} with initial data}\quad q_\eps (0)=P_{\leq \eps^{-\frac{1}{3}}}q_0.
\label{q0}
\end{equation}
Note that $q_\eps(0)$ is in $H^\infty_+$, and so there exists a local-in-time smooth solution $q_\eps(t)$ by Theorem \ref{CCM_eps_lwp}.
We first introduce the Lax operator
$$
\mathcal{L}_q:=-i\partial_x\mp qC_+\overline{q}.
$$ The following hierarchy of conservation laws for \eqref{CCM}
was established in \cite[\S~1.3]{GérardLenzmann2024}.

\begin{proposition}\label{jul29_1601}
    If $q(t)\in C([0,T];H^{\frac{n}{2}}_+(\R))$ with some $n\in \N$ solves \eqref{CCM}, the quantities 
    $$E_k(q)\coloneqq \langle q,\mathcal{L}^k_q q\rangle\quad \text{with }k=0,\dots,n$$
    are conserved. That is,
    $$\frac{\dd}{dt}E_n=0\quad \forall n\geq 0.$$
\end{proposition}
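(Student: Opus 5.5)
The plan is to use the Lax pair structure of \eqref{CCM}. Set $L=\mathcal{L}_{q(t)}$ and work first with smooth solutions, say $q\in C([0,T];H^\infty_+)$. Then I would extend to $C([0,T];H^{n/2}_+)$ by density, using the continuity of the data-to-solution map in $H^{n/2}_+$ from \cite{GérardLenzmann2024} together with continuity of $E_k$ on $H^{k/2}_+$. This continuity holds because $\langle q,L^kq\rangle$ is a sum of multilinear expressions in which $k$ derivatives are split as $k/2$ on each side.

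\textbf{Step 1 (Lax identity).} I would show there is a skew-adjoint operator $B_q$ on $L^2_+$ with
$$\tfrac{d}{dt}L=[B_q,L].$$
Following Gérard--Lenzmann, take
$$B_q=\mp\bigl(qC_+\partial_x\overline q-\partial_x qC_+\overline q\bigr)\pm i\bigl(qC_+\overline q\bigr)^2,$$
or more precisely $B_q=-i\partial_x^2\pm 2q\,\partial_x C_+\overline q\mp\ldots$, adjusted to the sign conventions here. The verification is a direct commutator computation. One differentiates $\mp qC_+\overline q$ in time, substitutes \eqref{CCM} for $\partial_t q$, and matches terms against $[B_q,L]$. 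The key tools are the identities $C_+(\overline{q}\,qC_+(\cdot))$ and $[\partial_x,C_+]=0$, together with the fact that for $f\in L^2_+$ one has $C_+(\overline{f}\cdot)$ acting compatibly on Hardy functions.

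\textbf{Step 2 (action on $q$).} I would show that
$$\partial_t q=B_q q\mp i\,L^2 q\quad\text{or more simply}\quad \partial_tq=B_qq+c\,L^2q,$$
i.e.\ that the evolution of $q$ itself is generated by $B_q$ up to a term that is a function of $L$. Gérard--Lenzmann have the identity $\partial_tq=B_qq-iL^2q$. This again comes from expanding $L^2q=-q''\pm i(qC_+\overline q q)'\pm\ldots$ and using $C_+(|q|^2)$-type identities.

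\textbf{Step 3 (conclusion).} Granting Steps 1 and 2, compute
$$\tfrac{d}{dt}\langle q,L^kq\rangle=\langle \partial_tq,L^kq\rangle+\langle q,[B,L^k]q\rangle+\langle q,L^k\partial_tq\rangle.$$
Substituting $\partial_tq=Bq-iL^2q$, the $B$-terms give $\langle Bq,L^kq\rangle+\langle q,L^kBq\rangle+\langle q,BL^kq-L^kBq\rangle$. This vanishes by skew-adjointness of $B$. The $L^2$-terms give $i\langle L^2q,L^kq\rangle-i\langle q,L^{k+2}q\rangle=0$ by self-adjointness of $L$. Hence $\tfrac{d}{dt}E_k=0$.

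\textbf{Main obstacle.} I expect the main obstacle to be the algebraic verification of Steps 1--2 with the nonlocal projection $C_+$, including getting the signs right in the focusing and defocusing cases. The domain issues also need care: $L^k q$ must make sense and the pairings must be justified for $q\in H^{n/2}$ with $k\le n$. For these I would pair $L^{\lceil k/2\rceil}q$ with $L^{\lfloor k/2\rfloor}q$ and rely on the approximation argument above.
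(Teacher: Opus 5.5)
Your route is the G\'erard--Lenzmann Lax-pair argument, which is what the paper relies on: it states this proposition with a citation and gives no proof of its own. Your Step~3 bookkeeping is correct.

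The one real slip is the generator. As written, the signs in your $B_q$ are off: the quadratic part has the wrong sign in both cases, and the quartic part has the wrong sign in the defocusing case. In the paper's conventions one needs
\[
B_qf=\pm\bigl(q\,C_+(\overline{q}'f)-q'\,C_+(\overline{q}f)\bigr)+i\,q\,C_+\bigl(\overline{q}\,q\,C_+(\overline{q}f)\bigr),
\]
for which $B_qq-i\mathcal{L}_q^2q=iq''\pm2qC_+(|q|^2)'=\partial_tq$.

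It is cleaner to use your second guess, corrected to $\widetilde B_q:=B_q-i\mathcal{L}_q^2=i\partial_x^2\pm2q\,\partial_xC_+\overline{q}$. That is, $i\partial_x^2$ carries a plus sign and there are no further terms. This operator is skew-adjoint on $L^2_+$ and satisfies $\partial_t\mathcal{L}_q=[\widetilde B_q,\mathcal{L}_q]$. Moreover, $\partial_tq=\widetilde B_qq$ is literally \eqref{CCM}. So Step~2 disappears, and Step~3 needs only skew-adjointness.

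Finally, your density step uses continuity of the flow in $H^{n/2}_+$, but that is not what G\'erard--Lenzmann provide. Per the paper's summary, they treat the focusing equation in integer $H^k_+$ with $k\ge1$. For the other cases you need the low-regularity theory, e.g.\ Killip--Laurens--Vi\c{s}an. That theory is also what gives ``solves'' a meaning when $n\le1$.
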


We begin with the lowest-order \emph{a-priori} bound of \eqref{CCM2}.
\begin{proposition}
    For any $q_0\in L^2_+$, the solution \eqref{q0} satisfies
    \begin{equation}\label{apr14_1446}
        \snorm{q_\varepsilon(t)}^2_{L^2}\leq \snorm{q_\eps(0)}^2_{L^2}
    \end{equation}
    for all $t\geq 0$ and $\varepsilon>0$.
\end{proposition}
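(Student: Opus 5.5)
We will differentiate the mass of the regularized solution in time and show that it is nonincreasing. Since $q_\eps(0)=P_{\leq \eps^{-1/3}}q_0\in H^\infty_+$, Proposition~\ref{CCM_eps_lwp} gives a solution $q_\eps\in C([0,T];H^s_+)$ for every $s\geq 2$. From the Duhamel formulation, $\partial_t q_\eps=(\eps+i)q_\eps''\pm 2q_\eps C_+(|q_\eps|^2)'$ holds in $C([0,T];H^{s-2})$. Hence $t\mapsto \snorm{q_\eps(t)}_{L^2}^2$ is $C^1$, with
\begin{equation*}
\frac{d}{dt}\snorm{q_\eps}_{L^2}^2 = 2\operatorname{Re}\langle q_\eps,\partial_t q_\eps\rangle .
\end{equation*}
The estimate therefore holds on the whole existence interval, and we state it for all $t\geq0$ on which the solution is defined. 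It will later be used when extending the solution globally.

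We treat the three terms of $\partial_t q_\eps$ separately.

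\textbf{Dissipative term.} Integration by parts gives
$$2\operatorname{Re}\,(\eps+i)\langle q,q''\rangle=-2\eps\snorm{q'}_{L^2}^2,$$
because $\langle q,q''\rangle=-\snorm{q'}^2$ is real, so the term $2\operatorname{Re}\, i\langle q,q''\rangle$ vanishes. This contribution is nonpositive.

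\textbf{Nonlinear term.} This is the step requiring some care. We must show that
$$\operatorname{Re}\int \overline{q}\,q\,C_+(|q|^2)'\,dx=\operatorname{Re}\int |q|^2\,\partial_x C_+(|q|^2)\,dx$$
vanishes. Write $\rho=|q|^2$, which is real-valued and lies in $H^1$. Since $\rho$ is real, $\rho=C_+\rho+C_-\rho$ with $C_-\rho=\overline{C_+\rho}$; the zero frequency is immaterial because $\rho\in L^1\cap L^2$ has continuous Fourier transform. In Fourier variables,
$$\int \rho\, \partial_xC_+\rho\,dx=\int_{\xi\geq0} i\xi\,|\widehat\rho(\xi)|^2\,d\xi ,$$
which is purely imaginary. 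Its real part is therefore zero.

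Combining the two pieces gives
$$\frac{d}{dt}\snorm{q_\eps}_{L^2}^2=-2\eps\snorm{q_\eps'}_{L^2}^2\leq 0,$$
and integrating in time yields \eqref{apr14_1446}.

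The only point needing attention is justifying the Fourier computation in the nonlinear term: $\langle \overline\rho,\partial_x C_+\rho\rangle$ must be expressed via Plancherel as $\int \overline{\widehat\rho}\, i\xi\mathbf 1_{\xi\ge0}\widehat\rho\,d\xi$. This is routine for $\rho\in H^1$, which holds since $q\in H^2$.
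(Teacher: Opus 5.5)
Your proof is correct and follows the paper's argument: differentiate the mass, integrate the dissipative term by parts to get $-2\varepsilon\|q_\varepsilon'\|_{L^2}^2$, and check that the nonlinear contribution vanishes. The paper simply asserts that the nonlinear terms coming from $q_t$ and $\overline{q}_t$ cancel. With the correct sign $\pm$ in its conjugated equation (the printed $\mp$ is a typo), that cancellation amounts to $C_+\rho'+C_-\rho'=\rho'$ and $\int\rho\rho'\,dx=0$. Your Plancherel computation, showing that $\int\rho\,\partial_xC_+\rho\,dx$ is purely imaginary, is an equivalent and explicit justification of the same cancellation.
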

\begin{proof}
We aim to establish an \textit{a-priori} bound for $E_0$, which is defined in Proposition~\ref{jul29_1601}. In the following, we let $q = q_\eps$ denote the solution \eqref{q0}. We have 
\begin{align*}
    \frac{d}{dt}\int \vert q\vert^2\;dx&=\int \overline{q}q_t+q \overline{q}_t\;dx\\
    &=\int \overline{q}\left(\varepsilon q''+iq''\pm 2qC_+(\vert q\vert^2)'\right)+q\left(\varepsilon \overline{q}''-i\overline{q}''\mp 2\overline{q}C_-(\vert q\vert^2)'\right)\;dx\\
    &=\varepsilon\int \overline{q} q''+q\overline{q}''\;dx\\
    &=-\varepsilon\int\overline{q}'q'+q'\overline{q}'\;dx\\
    &=-2\varepsilon \int \vert q'\vert^2\;dx\leq 0.
\end{align*}
Hence,
$$
\frac{d}{dt}E_0(t)
  = -2\varepsilon\int|q'|^{2}
  \;\le\;0,
$$
which implies
$$
    E_0(t)\le E_0(0),\;\forall t\ge0.
$$
\end{proof}

\begin{proposition}\label{jul31_1405}

Let $q_0\in H^2_+(\mathbb{R})$, and for $\varepsilon\in(0,1]$, let $q_\varepsilon$ be the solution to $(\mathrm{CCM}_\varepsilon)$ defined in \eqref{q0}. In the focusing case, assume in addition that
$$\|q_0\|_{L^2}^2<2\pi.$$
Then there exists a constant $C=C(q_0)>0$, independent of $\varepsilon\in(0,1]$, such that for every $t>0$,$$\|q_\varepsilon(t)\|_{\dot{H}^{\frac{1}{2}}}^2\leq C+C\varepsilon t\left(1+\|q_\varepsilon\|_{C([0,t];H^2)}^2\right).$$

\end{proposition}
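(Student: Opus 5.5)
We will work with the first energy
$$E_1(q)=\langle q,\mathcal L_q q\rangle=\snorm{q}^2_{\dH^{\frac12}}\mp\snorm{C_+(|q|^2)}^2_{L^2},$$
evaluated along $q=q_\eps$. The plan has three steps: compute $\frac{d}{dt}E_1$ for \eqref{CCM2}, bound the result using the $H^2$ norm, and use coercivity of $E_1$ to recover $\snorm{q}_{\dH^{\frac12}}$.

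\textbf{Step 1: the time derivative of $E_1$.} We write the equation as $q_t=X(q)+\eps q''$, where $X(q)=iq''\pm2qC_+(|q|^2)'$ is the \eqref{CCM} vector field. Since $E_1$ is a smooth functional on $H^1_+$, its derivative along a trajectory is linear in $q_t$. By Proposition \ref{jul29_1601}, $E_1$ is conserved by \eqref{CCM}, so $dE_1(q)[X(q)]=0$ for smooth $q$; the regularized solution is smooth for $t>0$ because $q_\eps(0)\in H^\infty_+$. Therefore
$$\frac{d}{dt}E_1=dE_1(q)[\eps q''].$$
The quadratic part contributes $2\eps\,\mathrm{Re}\langle |\partial| q,q''\rangle=-2\eps\snorm{q}^2_{\dH^{3/2}}\le0$, which we discard. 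The quartic part contributes $\mp 2\eps\,\mathrm{Re}\langle C_+(|q|^2),C_+(\bar q q''+q\bar q'')\rangle$.

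Since $|q|^2$ is real, $C_+(|q|^2)$ and $C_-(|q|^2)$ are complex conjugates. Using this symmetry and Plancherel, we rewrite the quartic term as $\mp\eps\int|q|^2(\bar q q''+q\bar q'')\,dx$, up to a positive constant and a zero-frequency convention. This is the error term the paper quotes (its \eqref{aug7_1613}).

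\textbf{Step 2: bounding the error.} We estimate
$$\Bigl|\eps\int|q|^2(\bar q q''+q\bar q'')\,dx\Bigr|\le 2\eps\snorm{q}_{L^\infty}^2\snorm{q}_{L^2}\snorm{q''}_{L^2}.$$
By Sobolev embedding and \eqref{apr14_1446}, this is $\lesssim\eps\,\snorm{q}_{H^2}^3\snorm{q_0}_{L^2}$. That cubic power is worse than the $1+\snorm{q}^2_{H^2}$ required in the statement.

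To do better, we use the Gagliardo--Nirenberg inequality $\snorm{q}_{L^\infty}^2\le\snorm{q}_{L^2}\snorm{q'}_{L^2}$ and interpolate $\snorm{q'}_{L^2}\le\snorm{q}_{L^2}^{1/2}\snorm{q''}_{L^2}^{1/2}$. This gives a bound $\eps C(\snorm{q_0}_{L^2})\snorm{q''}^{3/2}_{L^2}\lesssim \eps(1+\snorm{q}_{H^2}^2)$.

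Integrating in time then yields
$$E_1(q(t))\le E_1(q_\eps(0))+C\eps t\bigl(1+\snorm{q}^2_{C([0,t];H^2)}\bigr).$$

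\textbf{Step 3: initial data and coercivity.} For the initial energy, $E_1(q_\eps(0))\le\snorm{q_0}^2_{\dH^{1/2}}+\snorm{q_0}_{L^4}^4\le C(q_0)$, uniformly in $\eps$, since $P_{\le\eps^{-1/3}}$ is a contraction on every $H^s$.

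In the defocusing case $E_1\ge\snorm{q}^2_{\dH^{1/2}}$, and we are done.

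In the focusing case we apply Lemma \ref{GL_type_original} with $f=g=q$. This gives $\snorm{C_+(|q|^2)}^2_{L^2}\le\frac1{2\pi}\snorm{q}^2_{L^2}\snorm{q}^2_{\dH^{1/2}}$. By \eqref{apr14_1446}, $\snorm{q(t)}^2_{L^2}\le\snorm{q_0}^2_{L^2}$. Hence
$$E_1\ge\Bigl(1-\tfrac{\snorm{q_0}^2_{L^2}}{2\pi}\Bigr)\snorm{q}^2_{\dH^{1/2}},$$
and the prefactor is positive by the mass assumption. Dividing by it finishes the proof.

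The step I expect to be delicate is Step 1: justifying $dE_1[X]=0$ pointwise in time, rather than only along \eqref{CCM} flows. If citing Proposition \ref{jul29_1601} is not enough, I will verify it by direct computation, using Lemma \ref{Commutator_type} to cancel the sextic terms.
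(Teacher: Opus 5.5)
Your proposal is correct and follows essentially the same route as the paper. You differentiate $E_1$ along \eqref{CCM2} and keep only the $\varepsilon q''$ contribution; the paper also uses, tacitly, that the \eqref{CCM} vector field contributes nothing. You then bound the error term by $\varepsilon\|q\|_{L^2}^{5/2}\|q''\|_{L^2}^{3/2}$ plus Young's inequality, and conclude with the coercivity from Lemma \ref{GL_type_original} below the mass threshold.

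The remaining differences are minor. You use the $L^\infty$ Gagliardo--Nirenberg bound instead of the paper's $L^6$ Sobolev bound, and you check explicitly that $E_1(q_\varepsilon(0))$ is bounded uniformly in $\varepsilon$, which the paper does not do. As for the step you flagged, checking $dE_1(q)[X(q)]=0$ directly is elementary and does not need Lemma \ref{Commutator_type}. The sextic terms collapse to $-2\int |q|^4(|q|^2)'\,dx=0$, and the quartic cross terms cancel after one integration by parts together with $C_+=\tfrac12(1+iH)$ and the skew-adjointness of $H$.
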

\begin{proof}
    For 
$$E_1=\langle q,\mathcal{L}q\rangle=\int \left[(-i\partial_x \mp q\,C_+\overline{q})q\right]\cdot \overline{q}\,\mathrm{d}x=\int -i\overline{q}q'\mp \frac{1}{2}\vert q\vert^4\,\mathrm{d}x,$$
taking a time derivative yields
\begin{align}
    \frac{d}{dt}E_1&=\int i\overline{q}'q_t-iq'\overline{q}_t\mp \vert q\vert^2(q_t\overline{q}+q\overline{q}_t)\,\dd x\nonumber\\
    &=\varepsilon\int i\overline{q}'q''-iq'\overline{q}''\mp \vert q\vert^2(q''\overline{q}+q\overline{q}'')\,\dd x\nonumber\\
    &=-2\varepsilon\left\| q\right\|^2_{\dot{H}^{\frac{3}{2}}}\mp\,\varepsilon\int \vert q\vert^2(q''\overline{q}+q\overline{q}'')\,\dd x.\label{aug7_1613}
\end{align}
Set $I=\int |q|^2(q''\overline{q}+q\overline{q}'')$, by Hölder's inequality, it follows that 
$$|I|\leq 2\|q''\|_{L^2}\|q\|^3_{L^6}.$$
By Sobolev embedding in one dimension and Hölder's inequality in Fourier space, it follows that
$$\|q\|_{L^6}\lesssim \|q\|_{\dot{H}^{\frac{1}{3}}}\lesssim \|q\|^{\frac16}_{\dot{H}^2}\|q\|^{\frac56}_{L^2}.$$
Therefore, the following estimates are followed by using Gagliardo--Nirenberg inequality and Young's inequality
\begin{align*}
    |I|&\leq 2\|q''\|_{L^2}\|q\|^3_{L^6}\\
    &\lesssim \|q''\|_{L^2}^{\frac32}\|q\|^{\frac{5}{2}}_{L^2}\\
    &\lesssim \|q''\|^2_{L^2}+\|q\|^{10}_{L^2}.
\end{align*}
Since the term $-2\eps\|q\|^2_{\dH^{\frac{3}{2}}}$ is negative, we therefore ignore the contribution which made by this term. Now, we have 
$$\frac{d}{dt}E_1(t)\leq C(M_0)\eps (1+\|q(t)\|_{\dH}^2)$$
and after integration, it follows that
$$E_1(t)\leq E_1(0)+C(M_0)\eps t(1+\|q\|^2_{C([0,t];\dH^2)})$$
In the focusing case, we have
$$E_1(q)=\|q\|^2_{\dH^{\frac12}}-\snorm{C_+(|q|^2)}^2_{L^2}.$$
The second term follows by the estimate that
$$\snorm{C_+(|q|^2)}^2_{L^2}\leq \frac{\|q\|^2_{L^2}}{2\pi}\|q\|^2_{\dH^{\frac12}}.$$
Thus,
$$E_1\left(q(t)\right)\geq (1-\frac{M_0}{2\pi})\|q(t)\|^2_{\dH^{\frac{1}{2}}}$$
which $M_0<2\pi.$
Hence, we have
$$\|q\|^2_{\dH^{\frac12}}\leq C+C\eps t(1+\|q\|^2_{C([0,t];\dH^2)}).$$
\end{proof}

\begin{proposition}\label{may6_0905}
Let $q_\eps$ solve \eqref{CCM2} with $q(0)\in H^{3}(\R)\cap L^2_+(\R)$. 
Then there exists a constant $C>0$, depending only on $\|q(0)\|_{L^2}$ but independent of $\varepsilon\in(0,1]$, such that for all $t>0$,
$$
E_4(t)\le Ct(\snorm{q_\eps}_{C_t\dH^{\frac{1}{2}}}^{10}+1)+E_4(0),
$$
where $E_4$ is the fourth-level energy.
\end{proposition}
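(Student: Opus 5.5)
The plan is to mimic the proof of Proposition~\ref{jul31_1405}: differentiate $E_4=\langle q,\mathcal L_q^4 q\rangle$ along \eqref{CCM2}, use that $E_4$ is conserved by \eqref{CCM} (Proposition~\ref{jul29_1601}), and estimate what is left. We first expand $E_4$ explicitly. Its leading part is $\|q\|_{\dH^2}^2=\int|q''|^2\dx$, and the lower-order terms are polynomial integrals of degree $4,6,8,10$ in $q,\overline q$. They carry $3,2,1,0$ derivatives in total, arranged with $C_+$ as in $\langle q,(-i\partial_x\mp qC_+\overline q)^4q\rangle$. Write $q_t=N(q)+\eps q''$, where $N(q)$ is the \eqref{CCM} vector field. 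Since $\frac{d}{dt}E_4$ is linear in $q_t$ and the $N(q)$-part vanishes by conservation, we get
\[
\frac{d}{dt}E_4=\eps\, dE_4(q)[q''].
\]
The leading term contributes $\eps\cdot 2\operatorname{Re}\int\overline{q''}\,q''''\dx=-2\eps\|q\|_{\dH^{5/2}}^2$. This is a good, negative term, and we keep it to absorb the remaining contributions rather than discard it.

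Each lower-order term $T$ of $E_4$ contributes $\eps\,dT(q)[q'']$. This is a multilinear integral of the same degree in which one factor carries two extra derivatives. Its total derivative count is at most $5$ (for the quartic term: $3+2$). We redistribute derivatives so that no factor carries more than $\tfrac52$ derivatives in the $L^2$ sense. When a derivative would land on a factor sitting inside $C_+$ next to a conjugate, we use ordinary integration by parts where possible and Lemma~\ref{Commutator_type} otherwise. Then each term is bounded by Hölder, Sobolev embedding, and Gagliardo--Nirenberg interpolation between $\dH^{1/2}$ and $\dH^{5/2}$, with $\|q\|_{L^2}\le\|q(0)\|_{L^2}$ from \eqref{apr14_1446}. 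A typical bound is
\[
\|q\|_{\dH^{5/2}}^{2-\theta}\,\|q\|_{\dH^{1/2}}^{\,p}\cdot(\text{powers of }\|q\|_{L^2}).
\]
Scaling makes all terms homogeneous in the $\dH^{1/2}$-critical sense: $L^2$ is scaling-critical, so these powers are consistent. As long as the $\dH^{5/2}$ power is strictly less than $2$, Young's inequality gives
\[
\le \|q\|_{\dH^{5/2}}^2+C\bigl(\|q\|_{\dH^{1/2}}^{10}+1\bigr).
\]
Absorbing into $-2\eps\|q\|_{\dH^{5/2}}^2$ then yields $\frac{d}{dt}E_4\le C\eps(\|q\|_{\dH^{1/2}}^{10}+1)\le C(\|q\|_{C_t\dH^{1/2}}^{10}+1)$ since $\eps\le1$. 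The exponent $10$ comes from balancing the worst (octic or decic) term. Integrating in time gives the claim.

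The main obstacle is the quartic term, where the total derivative count is $5$. There the $\dH^{5/2}$ exponent is borderline: two factors would naively sit at $\dH^{5/2}$ level, with the other two at $\dH^{1/2}$-ish level in $L^\infty$. Exactly $2$ is not absorbable unless a small constant appears. We would first try to move one derivative onto a low-regularity factor using the symmetrization in Lemma~\ref{Commutator_type}. This should give $\|q\|_{\dH^{5/2}}\|q\|_{\dH^{2}}\cdot\|q\|\|q\|_{L^\infty}$-type bounds, and interpolating $\dH^2$ between $\dH^{1/2}$ and $\dH^{5/2}$ brings the total power strictly below $2$. If the nonlocal structure blocks this, a fallback is to split $q$ into low and high frequencies. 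Smallness of the high part of the $L^\infty$ factor would then supply a small constant.
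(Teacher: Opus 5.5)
Your overall scheme matches the paper's: use conservation to reduce to $\frac{d}{dt}E_4=\eps\,dE_4(q)[q'']$, bound the lower-order contributions by H\"older, Sobolev and Gagliardo--Nirenberg, and absorb them into the dissipation with Young. \textbf{The gap is that you misidentify the dissipation}, and this breaks the argument exactly where you locate the difficulty. The leading term gives $2\eps\operatorname{Re}\int\overline{q''}\,q''''\dx=-2\eps\|q'''\|_{L^2}^2=-2\eps\|q\|_{\dH^3}^2$, not $-2\eps\|q\|_{\dH^{5/2}}^2$. Compare \eqref{aug7_1613}: $E_1$ sits at the $\dH^{1/2}$ level and dissipates at $\dH^{3/2}$, a full derivative higher.

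With $\dH^{5/2}$ as the absorbing norm, the quartic contributions (four factors, five derivatives) are genuinely scaling-critical. Suppose such a term is bounded by $\|q\|_{\dH^{5/2}}^{\theta}\|q\|_{\dH^{1/2}}^{a}\|q\|_{L^2}^{b}$. Invariance under $q\mapsto\mu q$ and $q\mapsto\lambda^{1/2}q(\lambda\,\cdot)$ forces $\theta+a+b=4$ and $\tfrac52\theta+\tfrac12 a=6$, i.e.\ $\theta=2+\tfrac b4\ge 2$. Moving derivatives by integration by parts or Lemma~\ref{Commutator_type}, and then interpolating, cannot change this count. Your bound $\|q\|_{\dH^{5/2}}\|q\|_{\dH^2}\|q\|\,\|q\|_{L^\infty}$ cannot have total power below $2$. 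Splitting the $L^\infty$ factor by frequency does not help either. The low-frequency piece still leaves exponent $2$, with a constant of size $N^{1/2}\|q\|_{L^2}$ that is not small. The high-frequency piece, bounded by $N^{-\sigma}\|q\|_{\dH^{1/2+\sigma}}$, pushes the exponent above $2$. So absorption into $-2\eps\|q\|_{\dH^{5/2}}^2$ fails at the quartic terms, and none of your remedies repairs it.

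The fix is to use the correct dissipation; then there is no obstacle and your argument is the paper's. After one integration by parts on the terms containing $q''''$, no factor carries more than three derivatives. A typical quartic term satisfies $\eps\|q'''\|_{L^2}\|q''\|_{L^6}\|q\|_{L^6}^2\lesssim\eps\|q\|_{\dH^3}^{26/15}\|q\|_{\dH^{1/2}}^{24/15}\|q\|_{L^2}^{2/3}$, with $\dH^3$-power below $2$. The paper handles $I_2,\dots,I_7$ exactly this way, absorbing $\frac{\eps}{100}\|q\|_{\dH^3}^2$ from each into $-2\eps\|q'''\|_{L^2}^2$. It needs neither Lemma~\ref{Commutator_type} nor any frequency decomposition here.

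One further correction concerns your claim that the exponent $10$ comes from ``the worst (octic or decic) term.'' Every term of $\eps\,dE_4(q)[q'']$ scales like $\eps\|q\|_{\dH^3}^2$ under $q\mapsto\lambda^{1/2}q(\lambda\,\cdot)$. Hence Young's inequality returns the same power $\|q\|_{\dH^{1/2}}^{12}$, times a power of the mass, from every term treated this way; for example $(\|q\|_{\dH^{1/2}}^{24/15})^{15/2}$ above. The exponent is fixed by scaling, not by a worst term. For the bootstrap in Proposition~\ref{july23_0719}, any fixed power suffices.
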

\begin{proof}
In the following, we let $q = q_\eps$ denote the solution \eqref{q0}. 
After applying the Proposition \ref{jul29_1601} 
with $k=4$, we have the following terms where writing for short, we define that
$$\frac{d}{dt} E_4 = \sum_{j=1}^7 I_j$$
where
\begin{align*}
    I_1&\coloneqq \langle -i\partial (-i\partial q),-i\partial (-i\partial q)\rangle=\langle -\partial^2 q, -\partial^2 q \rangle,\\
    I_2&\coloneqq \langle-i\partial(iqC_+(\overline{q}q)),-i\partial(iqC_+(\overline{q}q))\rangle,\\
    I_3&\coloneqq  \langle (-i\partial q)C_+(\overline{q}q), (-i\partial q)C_+(\overline{q}q) \rangle,\\
    I_4&\coloneqq \langle qC_+(\overline{q}q)', qC_+(\overline{q}q)' \rangle,\\
    I_5&\coloneqq  \langle qC_+( \overline{q} q C_+ (\overline{q}q)), qC_+( \overline{q} q C_+ (\overline{q}q))\rangle,\\
    I_6&\coloneqq  \mp 2 \langle -\partial^2 q, qC_+(\overline{q} \partial q) \rangle \mp 2 \langle -\partial^2 q, (-i\partial q)C_+(\overline{q}q) \rangle,\\
    I_7&\coloneqq  \mp 2 \langle -\partial^2 q, qC_+(\overline{q}q)' \rangle
    \mp 2 \langle -\partial^2 q, qC_+(\overline{q} qC_+(\overline{q}q)) \rangle.
\end{align*} 
In the proof, we are bounding the terms through $I_1$ to $I_7$. \begin{comment}Here are the PDE that need to plug in:
    \begin{align*}
    q_t &= \varepsilon q'' + i q'' \pm 2 q\, C_+\big( |q|^2 \big)'\\
    \overline{q}_t &= \varepsilon \overline{q}'' - i \overline{q}'' \mp 2 \overline{q}\, C_-\big( |q|^2 \big)'.
    \end{align*}    
    \bigskip
\noindent
\end{comment}
\\\\
\noindent
{\it \underline{Estimate of $I_1$}}.
\begin{align}
    \frac{d}{dt}I_1 &= 2\Re\int \overline{q}''q''_tdx \nonumber \\
    &=2\Re\int \overline{q}''\left\{ \varepsilon q'' + i q'' \pm 2 q\, C_+\big( |q|^2 \big)'\right\}''dx\nonumber\\
    &=-2\Re\int \varepsilon \vert q'''\vert dx \nonumber \\
    &=-2\varepsilon \left\| q'''\right\|^2_{L^2}. \nonumber
\end{align}
{\it \underline{Estimate of $I_2$}}.
We have $I_2=\langle-i\partial(iqC_+(\overline{q}q)),-i\partial(iqC_+(\overline{q}q))\rangle$.
Therefore,
\begin{align}
    \frac{d}{dt}I_2&=-2\Re\int [(qC_+(\vert q\vert^2))]_t\overline{(qC_+(\vert q\vert^2))}''dx\nonumber\\
    &=-2\Re\int \big(q_tC_+(\overline{q}q)+qC_+(\overline{q}_tq)+qC_+(\overline{q}q_t)\big)\cdot\overline{(qC_+(\vert q\vert^2))}''dx\label{apr21_1248}
\end{align}
where \eqref{apr21_1248} can be reduced to 
\begin{multline}\label{apr21_1249}
    2\Re \int q_tC_+(\overline{q} q) \cdot \overline{(q C_+(|q|^2))}''\, dx-2\Re \int qC_+(\overline{q}_t q) \cdot \overline{(q C_+(|q|^2))}''\, dx\\ -2\Re \int qC_+(\overline{q} q_t) \cdot \overline{(q C_+(|q|^2))}''\, dx.
\end{multline}
Plugging \eqref{CCM2} into \eqref{apr21_1249} yields
\begin{align*}
\varepsilon \int S_\alpha \, T_\beta \, dx
+\varepsilon \int S_\alpha \, T_\gamma \, dx
\eqqcolon \mathcal{A}+\mathcal{B}.
\end{align*}
where
\begin{align*}
    S_\alpha&:=\sum_{(\alpha_1,\alpha_2,\alpha_3)\in \{(2,0,0),(0,2,0),(0,0,2)\}} q^{(\alpha_1)}C_+(\overline{q}^{(\alpha_2)}q^{(\alpha_3)}),\\
    T_\beta&:=\sum_{(\beta_1,\beta_2,\beta_3)\in \{(2,0,0),(0,2,0),(0,0,2)\}}c_\beta\left( q^{(\beta_1)}C_+(\overline{q}^{(\beta_2)}q^{(\beta_3)})\right),\\
    T_\gamma&:=\sum_{\substack{0\leq \gamma_1,\gamma_2,\gamma_3<2\\ \gamma_1+\gamma_2+\gamma_3=2\\ \gamma\in \mathbb{Z}}} c_\gamma\left( q^{(\gamma_1)}C_+(\overline{q}^{(\gamma_2)}q^{(\gamma_3)})\right).
\end{align*}
Both $\mathcal{A}$ and $\mathcal{B}$ are sextic in $q$. Thus, by Hölder's inequality, we place each factor in $L^6$, so that the Cauchy--Szeg\H{o} operator is bounded on $L^p$.
Considering $\vert \mathcal{A}\vert$, by Sobolev embedding and the Gagliardo--Nirenberg inequality we have the estimate:
\begin{align}
    \vert \mathcal{A}\vert&\lesssim \varepsilon \left\| q''\right\|^2_{L^{6}}\left\| q\right\|^4_{L^{6}}\nonumber \\
    &\lesssim \varepsilon\left\| q\right\|^{2}_{\dot{H}^{\frac{7}{3}}}\left\| q\right\|^{4}_{\dot{H}^{\frac{1}{3}}}\nonumber \\
    &\lesssim_{s} \varepsilon \left\| q\right\|^{\frac{8}{15}}_{\dot{H}^{\frac{1}{2}}}\cdot \left\| q\right\|^{\frac{22}{15}}_{\dot{H}^3}\nonumber \\
    &\leq \frac{1}{100}\left\| q\right\|^2_{\dH^3}+C\left\| q\right\|^2_{\dH^{\frac{1}{2}}}.\label{L6_case_2}
\end{align}
Considering $\vert \mathcal{B}\vert$, by Sobolev embedding and Gagliardo--Nirenberg inequality we have the following estimate:
\begin{align}
    \vert \mathcal{B}\vert&\lesssim \varepsilon\left\| q''\right\|_{L^6}\left\| q'\right\|^2_{L^6}\left\| q\right\|^3_{L^6}\nonumber\\
    &\lesssim_s \varepsilon \left(\left\| q\right\|^{\frac{4}{15}}_{\dot{H}^{\frac{1}{2}}}\cdot \left\| q\right\|^{\frac{11}{15}}_{\dot{H}^3}\right)\left(\left\| q\right\|^{\frac{4}{3}}_{\dot{H}^{\frac{1}{2}}}\cdot \left\| q\right\|^{\frac{2}{3}}_{\dot{H}^3}\right)\nonumber \\
    &\leq \frac{1}{100}\left\| q\right\|^2_{\dH^3}+C\left\| q\right\|^{\frac{16}{3}}_{\dH^{\frac{1}{2}}}.
    \label{L6_case_1}
\end{align}
Thus, it gives
\begin{equation}\label{I2_final}
    \vert \eqref{apr21_1249}\vert \leq \frac{c_\alpha+c_\gamma}{100}\left\| q\right\|^2_{\dH^3}+C\left\| q\right\|^{\frac{16}{3}}_{\dH^{\frac{1}{2}}}.
\end{equation}
\\\\
\noindent
{\it\underline{Estimate of $I_3$}}.
We have $I_3\coloneqq  \langle (-i\partial q)C_+(\overline{q}q), (-i\partial q)C_+(\overline{q}q) \rangle$. Therefore,
\begin{align}
    \frac{d}{dt}I_3&=-2\Re\int [q'C_+(\overline{q}q)]_{t}\cdot\overline{q'C_+(\overline{q}q)}\,dx\nonumber\\
    &=-2\Re\int q'_tC_+(\overline{q}q)\cdot\overline{q'C_+(\overline{q}q)}dx-2\Re\int q'C_+(\overline{q}_tq)\cdot\overline{q'C_+(\overline{q}q)}dx\nonumber\\
    &\phantom{==}-2\Re\int q'C_+(\overline{q}q_t)\cdot\overline{q'C_+(\overline{q}q)}\,dx\nonumber \\
    &\eqqcolon \mathcal{A}+\mathcal{B}+\mathcal{C}\label{apr21_1301}.
\end{align}
where can be further reduced to 
\begin{multline}\label{apr21_1308}
    -2\Re\int q'_tC_+(\overline{q}q)\cdot\overline{q'C_+(\overline{q}q)}dx-2\Re\int q'C_+(\overline{q}_tq)\cdot\overline{q'C_+(\overline{q}q)}dx\\-2\Re\int q'C_+(\overline{q}q_t)\cdot\overline{q'C_+(\overline{q}q)}\,dx 
    \eqqcolon \mathcal{A}+\mathcal{B}+\mathcal{C}.
\end{multline}
For $\vert \mathcal{A}\vert$, after choosing a set of Hölder pair we have the estimate
\begin{align*}
    \vert \mathcal{A}\vert&=2\varepsilon \Re \int q'''C_+(\overline{q}q)\cdot\overline{q'C_+(\overline{q}q)}dx\\
    &\lesssim \varepsilon \left\| q'''\right\|_{L^2}\left\| q'\right\|_{L^{10}}\left\| q\right\|^4_{L^{10}}\\
    &\lesssim_s \varepsilon \left\| q\right\|^{\frac{16}{25}}_{\dot{H}^{\frac{1}{2}}}\left\| q\right\|^{\frac{34}{25}}_{\dot{H}^{3}}\\
    &\leq \frac{1}{100}\left\| q\right\|^2_{\dH^3}+C\left\| q\right\|^2_{\dH^{\frac{1}{2}}}.
\end{align*}
We next show the estimate for the term $\mathcal{B}$, and notice that the term $\mathcal{C}$ would follow exactly the same estimate of $\mathcal{B}$. For $\mathcal{B}$, it follows that
\begin{align*}
    \vert \mathcal{B}\vert &=2\varepsilon \Re \int q'C_+(\overline{q}''q)\cdot\overline{q'C_+(\overline{q}q)}dx\\
    &\lesssim \varepsilon \left\| q''\right\|_{L^{6}}\left\| q'\right\|^2_{L^6}\left\| q\right\|^3_{L^6}\\
    &\lesssim_s \varepsilon \left(\left\| q\right\|^{\frac{4}{15}}_{\dot{H}^{\frac{1}{2}}}\cdot \left\| q\right\|^{\frac{11}{15}}_{\dot{H}^3}\right)\cdot \left(\left\| q\right\|^{\frac{4}{3}}_{\dot{H}^{\frac{1}{2}}}\cdot \left\| q\right\|^{\frac{2}{3}}_{\dot{H}^3}\right)\\
    &\leq \frac{1}{100}\left\| q\right\|^2_{\dH^3}+C\left\| q\right\|^{\frac{16}{3}}_{\dH^{\frac{1}{2}}},
\end{align*}
where the last inequality relies on \eqref{L6_case_1}. Thus, it gives
\begin{equation}\label{I3_final}
    \vert \eqref{apr21_1308}\vert \leq \frac{3}{100}\left\| q\right\|^2_{\dH^3}+C\left\| q\right\|_{\dH^{\frac{1}{2}}}^{\frac{16}{3}}.
\end{equation}
\\\\
\noindent
{\it\underline{Estimate of $I_4$}}.
Distributed the derivatives yields
\begin{align*}
I_4=&\langle qC_+(\overline{q}'q), qC_+(\overline{q}'q) \rangle+\langle qC_+(\overline{q}q'), qC_+(\overline{q}q') \rangle\\
&+\langle qC_+(\overline{q}'q), qC_+(\overline{q}q') \rangle+\langle qC_+(\overline{q}q'), qC_+(\overline{q}'q) \rangle
\end{align*}
and we will argue only the first term of $I_4$, whereas the rest of the terms follow exactly the same estimate. Say,
\begin{align}
    \frac{d}{d t}I_{4}&= \frac{d}{d t}I_{4}^{\mathbf{1}}+\left\{\text{similar terms}\right\}\nonumber \\
    &=2\Re\int [qC_+(\overline{q}'q)]_t\cdot \overline{qC_+(\overline{q}'q)}\,d x +\left\{\text{similar terms}\right\} \label{apr21_1317}.
\end{align}
Choosing $L^p$ where $p=6$ yields
\begin{align*}
    \left\vert2\Re\int [qC_+(\overline{q}'q)]_t\cdot \overline{qC_+(\overline{q}'q)}\,d x\right\vert&\leq \varepsilon\left(2\left\| q''\right\|_{L^6}\left\| q'\right\|^2_{L^6}\left\| q\right\|^3_{L^6}\right)+\varepsilon(\left\| q'''\right\|_{L^6}\left\| q'\right\|_{L^6}\left\| q\right\|_{L^6})\\
    &\eqqcolon \mathcal{A}+\mathcal{B}.
\end{align*}
Here, the estimate for $\mathcal{A}$ follows by \eqref{L6_case_1} and $\mathcal{B}$ follows by \eqref{L6_case_2}.
Thus, it gives
\begin{equation}\label{I4_final}
    \vert\eqref{apr21_1317}\vert \leq \frac{8}{100}\left\| q\right\|^2_{\dH^3}+C\left\| q\right\|_{\dH^{\frac{1}{2}}}^{\frac{16}{3}}.
\end{equation}
\\\\
\noindent
{\it\underline{Estimate of $I_5$}}. Recall that we have
$$I_5\coloneqq  \langle qC_+( \overline{q} q C_+ (\overline{q}q)), qC_+( \overline{q} q C_+ (\overline{q}q))\rangle.$$
Taking time derivative yields
\begin{align*}
    \frac{d}{d t} I_5&=2\Re\int \left[ qC_+( \overline{q} q C_+ (\overline{q}q))\right]_t\cdot \overline{ qC_+( \overline{q} q C_+ (\overline{q}q))}\,\dd x .
\end{align*}
Distributing the derivative in order from left to right, we define:
$$
\frac{d}{d t} I_5 := \mathcal{A}+\mathcal{B}+\mathcal{C}+\mathcal{D}+\mathcal{E},
$$
where each term corresponds to the time derivative hitting the respective factor in the expression
$$
qC_+(\overline{q} q C_+(\overline{q}q)).
$$
First, for $\mathcal{A}$, by choosing the right $L^p$ space, it follows that
\begin{align*}
    \vert \mathcal{A}\vert&=2 \varepsilon \left|\int q''C_+( \overline{q} q C_+ (\overline{q}q))\cdot \overline{ qC_+( \overline{q} q C_+ (\overline{q}q))}\,\dd x\right|\\
    &\lesssim \varepsilon \|q''\overline{q} q C_+ (\overline{q}q)\|_{L^2}\| q^2 \overline{q}C_+ (\overline{q}q))\|_{L^2}\\
    &\lesssim \varepsilon \left\| q''\right\|_{L^2}\left\| q\right\|^9_{L^{18}}\\
    &\lesssim \varepsilon \left\| q\right\|_{\dot{H}^2}\left\| q\right\|^9_{\dot{H}^{\frac{4}{9}}}\\
    &\lesssim_s \varepsilon \left\| q\right\|^{\frac{2}{5}}_{\dot{H}^{\frac{1}{2}}}\left\| q\right\|^{\frac{3}{5}}_{\dot{H}^3}\\
    &\leq \frac{1}{100}\left\| q\right\|^2_{\dH^3}+C\left\| q\right\|^{\frac{4}{7}}_{\dH^{\frac{1}{2}}}.
\end{align*}
The remaining terms $\mathcal{B}$ through $\mathcal{E}$ are estimated in the same manner as $\mathcal{A}$, as they share the same structural form.
\\\\
\noindent
{\it\underline{Estimate of $I_6$}}.
Recall that, $$I_6=  \mp 2 \langle -\partial^2 q, qC_+(\overline{q} \partial q) \rangle \mp 2 \langle -\partial^2 q, (-i\partial q)C_+(\overline{q}q) \rangle\eqqcolon I_{6}^{\mathbf{1}}+I_{6}^{\mathbf{2}}.$$
Term $I_{6}^{\mathbf{1}}$ follows the estimate:
\begin{multline*}
    \frac{d}{dt}I_{6}^{\mathbf{1}}=\mp4\Re\int -q''_t\cdot  \overline{qC_+(\overline{q}q')}\,d x\mp4\Re\int -q''\cdot \overline{q_tC_+(\overline{q}q')}\,d x\\ \mp4\Re\int -q''\cdot \overline{qC_+(\overline{q}_tq')}\,d x\mp4\Re\int -q''\cdot \overline{qC_+(\overline{q}q'_t)}\,d x \eqqcolon \mathcal{A}+\mathcal{B}+\mathcal{C}+\mathcal{D}.
\end{multline*}
We now turn to the estimate for $\mathcal{A}$, applying integration by parts gives
\begin{align*}
    \mathcal{A}&=\pm4\Re \int q'_t\cdot \overline{q'C_+(\overline{q}q')}\,d x\pm4\Re \int q'_t\cdot \overline{qC_+(\overline{q}'q')}\,d x\pm4\Re \int q'_t\cdot \overline{qC_+(\overline{q}q'')}\,d x\\
    &\eqqcolon \mathcal{A}_1+\mathcal{A}_2+\mathcal{A}_3.
\end{align*}
Since the the term $q'''$ appears after plugging-in the \eqref{CCM2} and must be placed in $L^2$, the remaining three copies must be assigned to $L^p$ spaces so that Hölder’s inequality is applicable. As there are only three such copies, we place each in $L^6$ to ensure that the exponents sum to 1.
\begin{itemize}
    \item To estimate $\vert \mathcal{A}_1 \vert$, we apply Sobolev embedding and the Gagliardo--Nirenberg inequality,
    \begin{align*}
        \vert\mathcal{A}_1\vert&\lesssim \varepsilon \left\| q'''\right\|_{L^2}\left\| q'\right\|^2_{L^{p}}\left\| q\right\|_{L^{p}}\\
        &\lesssim \varepsilon \left\| q'''\right\|_{L^2}\left\| q'\right\|^2_{L^6}\cdot C_s\cdot C_S\\
        &\lesssim_s \varepsilon \left\| q\right\|_{\dot{H}^3}\cdot \left(\left\| q\right\|^{\frac{4}{3}}_{\dot{H}^{\frac{1}{2}}}\cdot \left\| q\right\|^{\frac{2}{3}}_{\dot{H}^3}\right)\\
        &\leq \frac{1}{100}\left\| q\right\|^2_{\dH^3}+C\left\| q\right\|^8_{\dH^{\frac{1}{2}}}.
    \end{align*}
    \item The term $\vert \mathcal{A}_2 \vert$ is estimated in the same manner as $\vert \mathcal{A}_1 \vert$.
    \item Follow the same routine, the estimate of $\vert \mathcal{A}_3\vert$ would be,
    \begin{align*}
        \vert \mathcal{A}_3\vert&\lesssim \varepsilon\left\| q'''\right\|_{L^2}\left\| q''\right\|_{L^6}\left\| q\right\|^{2}_{L^6}\\
        &\lesssim_s \varepsilon\left\| q\right\|_{\dot{H}^3}\left(\left\| q\right\|^{\frac{4}{15}}_{\dot{H}^{\frac{1}{2}}}\cdot \left\| q\right\|^{\frac{11}{15}}_{\dot{H}^3}\right)\\
        &\leq \frac{1}{100}\left\| q\right\|^2_{\dH^3}+C\left\| q\right\|^2_{\dH^{\frac{1}{2}}}.
    \end{align*}
\end{itemize}
Next we turn to the estimate for $\vert \mathcal{B}\vert$, and we put everything in $L^p$-space where $p=4$.
\begin{align*}
    \vert \mathcal{B}\vert&\lesssim \varepsilon \left\| q''\right\|^2_{L^4}\left\| q'\right\|_{L^4}\left\| q\right\|_{L^4}\\
    &\lesssim_s \varepsilon \left\| q\right\|^2_{\dot{H}^{\frac{9}{4}}}\left\| q\right\|_{\dot{H}^{\frac{5}{4}}}\\
    &\lesssim_s \left(\left\| q\right\|^{\frac{3}{5}}_{\dot{H}^{\frac{1}{2}}}\left\| q\right\|^{\frac{7}{5}}_{\dot{H}^{3}}\right)\cdot \left(\left\| q\right\|^{\frac{7}{10}}_{\dot{H}^{\frac{1}{2}}}\left\| q\right\|^{\frac{3}{10}}_{\dot{H}^{3}}\right)\\
    &\leq \frac{1}{100}\left\| q\right\|^2_{\dH^3}+C\left\| q\right\|^{\frac{26}{3}}_{\dH^{\frac{1}{2}}}\\
    &\leq \frac{1}{100}\snorm{q}^2_{\dH^3}+C(\snorm{q}^{10}_{\dH^{\frac{1}{2}}}+1).
\end{align*}
The estimates for $\mathcal{C}$ and $\mathcal{D}$ are identical to those of $\mathcal{B}$ and $\mathcal{A}_3$, respectively.
We now turn to the estimate of $I^{\mathbf{2}}_6$, it follows that
\begin{multline*}
    \frac{\dd}{\dd t}I^{\mathbf{2}}_6=\mp 4\Re \int -q''_{t}\cdot \overline{q'C_+(\overline{q}q)}\,\dd x\mp 4\Re \int -q''\cdot \overline{q'_tC_+(\overline{q}q)}\,\dd x\mp 4\Re \int -q''\cdot \overline{q'C_+(\overline{q}_tq)}\,\dd x\\
    \mp 4\Re \int -q''\cdot \overline{q'C_+(\overline{q}q_t)}\,\dd x\eqqcolon \mathcal{F}+\mathcal{G}+\mathcal{H}+\mathcal{J}.
\end{multline*}
Consider the term $\mathcal{E}$. Applying integration by parts once, we obtain
\begin{equation*}
    \mathcal{F}=\pm 4\Re\int q'_t\cdot \overline{q''C_+(\overline{q}q)}\,\dd x+\pm 4\Re\int q'_t\cdot \overline{q'C_+(\overline{q}'q)}\,\dd x+\pm 4\Re\int q'_t\cdot \overline{q'C_+(\overline{q}q')}\,\dd x\eqqcolon \mathcal{F}_1+\mathcal{F}_2+\mathcal{F}_3.
\end{equation*}
The estimate for $\mathcal{F}_1$ is identical to that of $\mathcal{A}_3$, and the estimates for $\mathcal{F}_2$ and $\mathcal{F}_3$ follow as in $\mathcal{A}_1$. Also, the estimate for $\mathcal{G}$ follows as $\mathcal{A}_3$, and the estimate for $\mathcal{H}$, $\mathcal{J}$ follow as the estimate for 
$\mathcal{B}$.
\\\\
\noindent
{\it\underline{Estimate of $I_7$}}.
Recall that we have $$    I_7\coloneqq  \mp 2 \langle -\partial^2 q, qC_+(\overline{q}q)' \rangle
    \mp 2 \langle -\partial^2 q, qC_+(\overline{q} qC_+(\overline{q}q)) \rangle\eqqcolon I^{\mathbf{1}}_{7}+I^{\mathbf{2}}_{7}.$$
Here,
$$I^{\mathbf{1}}_{7}=\mp 2 \langle -\partial^2 q, qC_+(\overline{q}'q) \rangle\mp 2 \langle -\partial^2 q, qC_+(\overline{q}q') \rangle$$
so the estimate for the right-hand side of above follows as the estimate for $I^{\mathbf{1}}_{6}$.
We next turn to the estimate for $I^{\mathbf{2}}_7$, it follows that
\begin{align*}
    \frac{\dd}{\dd t}I^{\mathbf{2}}_7&=\mp 4\Re \int -q''_t\cdot \overline{qC_+(\overline{q}qC_+(\overline{q}q))}\,\dd x\,\mp\, 4\Re \int -q''\cdot \overline{q_tC_+(\overline{q}qC_+(\overline{q}q))}\,\dd x\\
    &\mp\, 4\Re \int -q''\cdot \overline{qC_+(\overline{q}_tqC_+(\overline{q}q))}\,\dd x
    \mp\, 4\Re \int -q''\cdot \overline{qC_+(\overline{q}q_tC_+(\overline{q}q))}\,\dd x\\
    &\mp\, 4\Re \int -q''\cdot \overline{qC_+(\overline{q}qC_+(\overline{q}_tq))}\,\dd x\mp\, 4\Re \int -q''\cdot \overline{qC_+(\overline{q}qC_+(\overline{q}q_t))}\,\dd x\\
    &\eqqcolon \mathcal{A}+\mathcal{B}+\mathcal{C}+\mathcal{D}+\mathcal{E}+\mathcal{F}.
\end{align*}
Apply one time of integration by part on $\mathcal{A}$ we have
\begin{equation*}
    \mathcal{A}=\pm 4\Re\int q'_t\cdot \left[\overline{qC_+(\overline{q}qC_+(\overline{q}q))}\right]'\,\dd x\eqqcolon \mathcal{A}\text{-type}.
\end{equation*}
We choose one scenario among the $\mathcal{A}\text{-type}$ and the rest of $4$ scenarios follow as this one. Consider $$  \mathcal{A}=\pm 4\Re\int q'_t\cdot \overline{qC_+(\overline{q}qC_+(\overline{q}q'))}\,\dd x$$ and we choose the $L^p$-space with $p=10$ to satisfy the Hölder pair. It follows that
\begin{align*}
    \vert \mathcal{A}\vert&\lesssim \varepsilon \left\| q'''\right\|_{L^2}\left\| q'\right\|_{L^{10}}\left\| q'\right\|^4_{L^{10}}\\
    &\lesssim_s \varepsilon \left\| q\right\|_{\dot{H}^3}\left\| q'\right\|_{\dot{H}^{\frac{2}{5}}}\\
    &\lesssim_s \varepsilon \left\| q\right\|^{\frac{34}{25}}_{\dot{H}^3}\left\| q\right\|_{\dot{H}^{\frac{1}{2}}}^{\frac{16}{25}}\\
    &\leq \frac{1}{100}\left\| q\right\|^2_{\dH^3}+C\left\| q\right\|^2_{\dH^{\frac{1}{2}}}.
\end{align*}
\\
\noindent
{\it{\underline{Conclusion}}}.
Adding all the terms from $I_2$ to $I_7$, we conclude that
\begin{align*}
    \left|\sum^7_{i=2}\frac{d}{dt}I_i\right|\leq C\left\| q\right\|^2_{\dot{H}^3}+C\left\| q\right\|^{10}_{\dot{H}^{\frac{1}{2}}}.
\end{align*}
Applying Young's inequality, where we fixed each estimate of single terms with the coefficient $\frac{1}{100}\varepsilon$, the terms here are strictly less than 50 so we pick $\frac{50}{100}=\frac{1}{2}$ for an easier computation,
\begin{equation}
    \frac{\dd}{dt}I_1+\vert \sum^7_{j=2}\frac{\dd}{dt}I_j\vert \leq\frac{1}{2}\varepsilon\left\| q\right\|^2_{\dot{H}^2}+C(\snorm{q}_{\dH^{\frac12}}^{10}+1).
\end{equation}
And it gives
\begin{align*}
    -\frac{5}{2}\varepsilon \left\| q\right\|^2_{\dH^2}-C(\snorm{q}_{\dH^{\frac12}}^{10}+1)\leq \frac{\dd}{\dd t}E_4&\leq -2\varepsilon\left\| q\right\|^2_{\dH^2}+\frac{1}{2}\varepsilon \left\| q\right\|^2_{\dH^2}+C(\snorm{q}_{\dH^{\frac12}}^{10}+1)\\
    &\leq C(\snorm{q}_{\dH^{\frac12}}^{10}+1).
\end{align*}
By Grönwall’s inequality, we obtain

\begin{align*}
E_4(t)\le Ct(\snorm{q_\eps}_{C_t\dH^{\frac{1}{2}}}^{10}+1)+E_4(0); \quad \forall t\geq 0, \varepsilon\in (0,1].
\end{align*}
\end{proof}
We next start proving the \emph{a-priori} $H^2$ bound.
\begin{corollary}[$H^2$ bound] \label{may13_1439}
Let $q(t)$ be the solution of \eqref{CCM2} on $[0,T]$. Fix $T>0$, and let $q(t)$ be the solution of \eqref{CCM2} on $[0,T]$.  There exists a constant $C>0$ so that for all $t\in [0,T]$ and $\eps\in (0,1]$ we have
    $$\left\| q\right\|^2_{\dot{H}^2}\leq CE_4(q)+C(\left\| q\right\|^{10}_{\dot{H}^{\frac{1}{2}}}+1).$$
\end{corollary}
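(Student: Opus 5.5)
We use the decomposition of $E_4$ from the proof of Proposition \ref{may6_0905}: $E_4=\sum_{j=1}^7 I_j$. The term $I_1=\snorm{q''}_{L^2}^2=\snorm{q}_{\dH^2}^2$ is the leading part, and $I_2,\dots,I_7$ are lower-order perturbations. The aim is
\begin{equation*}
\snorm{q}_{\dH^2}^2\le E_4+\sum_{j=2}^7|I_j|,
\end{equation*}
together with a bound for each $|I_j|$ of the form $\frac{1}{20}\snorm{q}_{\dH^2}^2+C\bigl(\snorm{q}_{\dH^{\frac12}}^{10}+1\bigr)$. Absorbing the small multiples of $\snorm{q}_{\dH^2}^2$ into the left side then gives the corollary with $C$ depending only on $M_0=\snorm{q(0)}_{L^2}^2$. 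Note that $\snorm{q(t)}_{L^2}\le\snorm{q(0)}_{L^2}$ by \eqref{apr14_1446}.

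Each $I_j$, $j\ge 2$, is estimated by Hölder's inequality, the $L^p$-boundedness of $C_+$ for $1<p<\infty$, Sobolev embedding $\dH^{\frac12-\frac1p}\hookrightarrow L^p$, and Gagliardo--Nirenberg interpolation between $\dH^{\frac12}$ and $\dH^2$.

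\textbf{Quartic terms.} The quartic terms, namely $I_6$ and the first part of $I_7$, have the shape $\langle q'',\,q\,C_+(\overline{q}q')\rangle$ and $\langle q'',\,q'\,C_+(|q|^2)\rangle$. Put $q''$ in $L^2$, $q'$ in $L^6$ and the two copies of $q$ in $L^6$. The bounds $\snorm{q'}_{L^6}\lesssim\snorm{q}_{\dH^{4/3}}$ and $\snorm{q}_{L^6}\lesssim\snorm{q}_{\dH^{1/3}}$ are then interpolated. The $\dH^{1/3}$ norm sits below $\dH^{1/2}$, so it is interpolated between $L^2$ and $\dH^{1/2}$ using the mass bound. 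This gives a total $\dH^2$-exponent $1+\frac59<2$, and Young's inequality closes the estimate.

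\textbf{Sextic terms.} The sextic terms $I_2,I_3,I_4$ contain two derivatives distributed among six factors, for example $|q'|^2|q|^4$ or $q''\overline{q}\cdots$. Putting every factor in $L^6$ or $L^{10}$, as done for the time derivatives in Proposition \ref{may6_0905}, gives a $\dH^2$-power strictly below $2$.

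\textbf{Octic term.} $I_5$ has no derivatives and is bounded by $\snorm{q}_{L^{16}}^8\lesssim\snorm{q}_{\dH^{7/16}}^8$, which is controlled by $\snorm{q}_{\dH^{1/2}}$ and the mass.

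\textbf{Main obstacle.} The hardest part is the exponent bookkeeping in the quartic cross terms. Each power of $\snorm{q}_{\dH^{\frac12}}$ produced by Young's inequality must be at most $10$, so that it is dominated by $\snorm{q}_{\dH^{\frac12}}^{10}+1$. The $\dH^2$-power must also stay strictly below $2$. Where a direct $L^6$ split is borderline, I would first integrate by parts once, moving a derivative off $q''$. For terms with $C_+$ where the derivative would otherwise land on the wrong factor, I would redistribute it using Lemma \ref{Commutator_type}, and then repeat the Hölder/Gagliardo--Nirenberg argument with adjusted exponents.
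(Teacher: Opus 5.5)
Your proposal follows essentially the same route as the paper: write $E_4=\sum_j I_j$, keep $I_1=\|q\|_{\dot H^2}^2$, bound each $I_j$ with $j\ge 2$ using H\"older, $L^p$-boundedness of $C_+$, Sobolev embedding and interpolation between $L^2$, $\dot H^{1/2}$ and $\dot H^2$, and then absorb the small multiples of $\|q\|_{\dot H^2}^2$ (your quartic $\dot H^2$-exponent $14/9$ matches the paper's). One small slip, which does not affect the argument: $I_5$ is the square of a quintic expression, so it has degree ten rather than eight, and it should be bounded by $\|q\|_{L^{10}}^{10}\lesssim \|q\|_{L^2}^{2}\|q\|_{\dot H^{1/2}}^{8}$ as in the paper; the integration-by-parts and Lemma~\ref{Commutator_type} fallback you mention is not needed here.
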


\begin{proof}
    For $E_4$, we will prove that $\sum^7_{j=1}I_j$ could be bounded as a combination of $\left\| q\right\|_{\dot{H}^{\frac{1}{2}}}$ and $\left\| q\right\|_{\dot{H}^2}$
    which is defined in the proof of Proposition \ref{may6_0905}.

    For term $I_1$, it gives $I_1=\left\| q\right\|_{\dot{H}^{2}}^2$ directly.

    For term $I_2$, it follows that
        \begin{align*}
            \vert I_2\vert&= 2\Re\int \partial(qC_+(\overline{q}q))\cdot \overline{\partial(qC_+(\overline{q}q))}\,\dd x\\
            &\leq 3\left\| q'\right\|^2_{L^6}\left\| q\right\|^4_{L^6}\\
            &\lesssim 3\left\| q\right\|^{\frac{8}{9}}_{\dH^{\frac{1}{2}}}\left\| q\right\|^{\frac{10}{9}}_{\dot{H}^2}\\
            &\leq \frac{3}{100}\left\| q\right\|^2_{\dH^2}+C\left\| q\right\|^2_{\dH^{\frac{1}{2}}}.
        \end{align*}

    For term $I_3$, it follows by the estimate of term $I_2$ above, which yields
       \begin{align*}
           \vert I_3\vert&=2\Re\int q'C_+(\overline{q}q)\cdot\overline{q'C_+(\overline{q}q)}\,\dd x\\
           &\leq \frac{1}{100}\left\| q\right\|^2_{\dH^2}+C\left\| q\right\|^2_{\dH^{\frac{1}{2}}}.
       \end{align*}    
   
    The term $I_4$ follows by the estimate of term $I_2$ as well where we have $4$ copies.

    Term $I_5$ follows by
       \begin{align*}
           \vert I_5\vert&=2\Re\int  qC_+( \overline{q} q C_+ (\overline{q}q))\cdot \overline{ qC_+( \overline{q} q C_+ (\overline{q}q))}\,\dd x\\
            &\lesssim \left\| q\right\|^{10}_{H^{\frac{2}{5}}}\\
            &\leq \snorm{q}^2_{L^2}\snorm{q}^8_{\dH^{\frac{1}{2}}}.
        \end{align*}

    For the term $I_6$, we define that
            $$I_6= \mp 2 \langle -\partial^2 q, qC_+(\overline{q} \partial q) \rangle \mp 2 \langle -\partial^2 q, (-i\partial q)C_+(\overline{q}q) \rangle\eqqcolon \mathcal{A}+\mathcal{B}.$$
            Term $\mathcal{A}$ gives
            \begin{align}
                \vert \mathcal{A}\vert&=4\Re\int q''\cdot \overline{qC_+(q\overline{q}')}\,\dd x\nonumber\\
                &\lesssim \left\| q\right\|_{\dH^2}\cdot \left\| q\right\|_{\dH^{\frac{4}{3}}}\left\| q\right\|^2_{\dH^{\frac{1}{3}}}\nonumber\\
                &\lesssim_s \left\| q\right\|_{\dH^{\frac{1}{2}}}^{\frac{4}{9}}\left\| q\right\|^{\frac{14}{9}}_{\dH^2}\nonumber\\
                &\leq \frac{1}{100}\left\| q\right\|^2_{\dH^2}+C\left\| q\right\|^2_{\dH^{\frac{1}{2}}}\label{CorE4-I6A-4}.
            \end{align}
            Term $\mathcal{B}$ follows the estimate above.
        
         For term $I_7$, we also define that
            $$I_7=\mp 2 \langle -\partial^2 q, qC_+(\overline{q}q)' \rangle \mp 2 \langle -\partial^2 q, qC_+(\overline{q} qC_+(\overline{q}q)) \rangle\eqqcolon \mathcal{A}+\mathcal{B}.$$
            Term $\mathcal{A}$ gives
            \begin{align*}
                \vert \mathcal{A}\vert &=4\Re \int q''\cdot \overline{qC_+(\overline{q}q)'}\,\dd x.
            \end{align*}
            This follows by the estimate of \eqref{CorE4-I6A-4} but with two copies. For term $\mathcal{B}$, it gives
            \begin{align*}
                \vert \mathcal{B}\vert&=2\Re\int q''\cdot \overline{qC_+(\overline{q}qC_+(\overline{q}q))}\,\dd x\\
                &\lesssim \left\| q\right\|_{\dH^2}\left\| q\right\|^5_{\dH^{\frac{2}{5}}}\\
                &\leq \frac{1}{100}\left\| q\right\|^2_{\dH^2}+\snorm{q}^2_{L^2}\snorm{q}^8_{\dH^{\frac{1}{2}}}.
            \end{align*}
Summing over all such terms, and using a uniform bound of the form
$$
\frac{1}{100}\left\| q\right\|^2_{\dH^2} + C\left\| q\right\|^8_{\dot{H}^{\frac{1}{2}}},
$$
we note that the number of these contributions does not exceed 50. Therefore, for convenience, we collect them into a single term bounded by
$
\frac{1}{2}\left\| q\right\|^2_{H^2} + C\left\| q\right\|^8_{\dot{H}^{\frac{1}{2}}}.
$
It follows that
$$\frac{1}{2}\left\| q\right\|^2_{H^2} - C(\left\| q\right\|^{10}_{\dot{H}^{\frac{1}{2}}}+1)\leq E_4\leq \frac{3}{2}\left\| q\right\|^2_{H^2} + C(\left\| q\right\|^{10}_{\dot{H}^{\frac{1}{2}}}+1),$$
and this gives
$$\left\| q\right\|^2_{\dot{H}^2}\leq \Tilde{C}t+E_4(0)+C(\left\| q\right\|^{10}_{\dH^{\frac{1}{2}}}+1).$$
Here, the power $10$ in the lower-order term is precisely where we use the previous proposition.
We have thus completed the a priori estimate for $E_4(t)$.
\end{proof}

\begin{proposition}\label{july23_0719}
    Fix $q_0$ that defined in \eqref{q0}. In the focusing case, assume in addition that
$$\|q_0\|_{L^2}^2<2\pi.$$
Then, for all $T>0$, there exists $\eps_0>0$ so that the set
$$ \{ q_\eps(t) : t\in[0,T],\ \eps\in(0,\eps_0] \} $$
is bounded in $H^2$.
\end{proposition}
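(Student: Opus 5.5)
We close the coupled $\dot H^{\frac12}$–$H^2$ estimates by a continuity (bootstrap) argument on $[0,T]$. First we record the bounds on the data. By \eqref{apr14_1446}, $\|q_\eps(t)\|_{L^2}\le\|q_0\|_{L^2}$ for all $t$ and $\eps$. Since $q_\eps(0)=P_{\le\eps^{-1/3}}q_0$, we have $\|q_\eps(0)\|_{H^2}\le\|q_0\|_{H^2}$, so $E_1(q_\eps(0))$ is bounded uniformly in $\eps$. The same holds for $E_4(q_\eps(0))$ by the upper bound $E_4\le\frac32\|q\|_{H^2}^2+C(\|q\|_{\dot H^{1/2}}^{10}+1)$ obtained in the proof of Corollary \ref{may13_1439}. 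In the focusing case the mass condition passes to the truncated data, which is what makes the constant $C=C(q_0)$ of Proposition \ref{jul31_1405} uniform.

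Next fix $T>0$ and set
$$B_0:=C\bigl(E_4(q_\eps(0))+1\bigr)+C\Bigl(1+CT\bigl((2C+1)^5+1\bigr)\Bigr)\bigl((2C+1)^5+1\bigr),$$
or more simply a constant $K$ large enough that the chain of estimates below closes; it depends on $T$ and $q_0$ but not on $\eps$. For each $\eps$, let $T_\eps^*\le T$ be the supremal time such that $\|q_\eps\|_{C([0,t];H^2)}^2\le 2K$ for $t<T_\eps^*$. This time is positive by the local theory of Proposition \ref{CCM_eps_lwp} and continuity. On $[0,T_\eps^*)$, Proposition \ref{jul31_1405} gives
$$\|q_\eps(t)\|_{\dot H^{1/2}}^2\le C+C\eps T(1+2K).$$
Now choose $\eps_0=\eps_0(T,K)$ so that $\eps_0T(1+2K)\le1$. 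Then $\|q_\eps\|_{\dot H^{1/2}}^2\le 2C$ on that interval, a bound independent of $K$. Feeding this into Proposition \ref{may6_0905} gives $E_4(t)\le CT((2C)^5+1)+E_4(0)$, and Corollary \ref{may13_1439} then gives $\|q_\eps(t)\|_{\dot H^2}^2\le K_1$ with $K_1$ depending only on $T$ and $q_0$. Adding the $L^2$ bound controls the full $H^2$ norm by $K_1+M_0$. Choosing $K:=K_1+M_0$ from the start, we get $\|q_\eps\|_{H^2}^2\le K<2K$ on $[0,T_\eps^*)$. By continuity of $t\mapsto\|q_\eps(t)\|_{H^2}$, together with the blow-up alternative from Proposition \ref{CCM_eps_lwp} (the solution extends as long as the $H^2$ norm stays bounded), it follows that $T_\eps^*=T$ and the solution exists on all of $[0,T]$.

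The main obstacle is the ordering of constants, so that the bootstrap is not circular. The $\dot H^{1/2}$ bound must not depend on $K$ in any way that feeds back. This works only because the $H^2$ norm enters Proposition \ref{jul31_1405} multiplied by $\eps$, so shrinking $\eps_0$ absorbs it; this is exactly why the statement asks for some $\eps_0>0$ rather than all $\eps\in(0,1]$. I will therefore fix $K$ first, using only the $\eps$-independent constants of Propositions \ref{jul31_1405} and \ref{may6_0905} and Corollary \ref{may13_1439}, and only then choose $\eps_0$. I will also check that Proposition \ref{may6_0905} is applied with $\sup_{[0,t]}\|q\|_{\dot H^{1/2}}$, which the bootstrap controls uniformly on the interval.
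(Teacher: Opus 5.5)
Your argument is correct and is essentially the paper's: both close the coupled system from Proposition~\ref{jul31_1405}, Proposition~\ref{may6_0905} and Corollary~\ref{may13_1439} by a continuity argument in which $\eps_0$ is chosen only after the $H^2$ level $K$ is fixed, so that the term $C\eps t\|q_\eps\|_{C([0,t];H^2)}^2$ in the $\dH^{\frac12}$ bound is absorbed. The differences are cosmetic: the paper bootstraps $\|q_\eps(t)\|_{\dH^{\frac12}}^2\le 3C$ and derives $K$ from that, whereas you bootstrap the $H^2$ norm (so you must compute $K$ in advance from the $K$-independent bound $2C$, which you do), and you also make explicit the extension to $[0,T]$ via the blow-up alternative of Proposition~\ref{CCM_eps_lwp}, which the paper leaves tacit.
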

\begin{proof}

By Proposition \ref{jul31_1405}, Proposition \ref{may6_0905}, and Corollary \ref{may13_1439}, we have
\begin{equation}\label{system_inequality}
    \begin{cases}
      \snorm{q(t)}^2_{\dot{H}^{\frac{1}{2}}}&\leq C_1\eps t(\|q\|^2_{\dot{H}^2}+1)+C_1, \\
        \snorm{q(t)}^2_{\dH^2}&\leq C_1t\snorm{q(t)}_{\dH^{\frac{1}{2}}}^{10}+C_1.
    \end{cases}
\end{equation}

Next, we run a standard bootstrap argument.
Fix $T>0$ and let $C$ be larger than both $C_1$ and $\snorm{q(0)}^2_{\dH^{\frac{1}{2}}}$. Let $t_0\in (0,T]$ be the largest time such that 
 \begin{equation}
 \snorm{q(t)}^2_{\dH^{\frac{1}{2}}}\leq 3C \quad\text{for all }t\in [0,t_0].
 \label{boot 1131}
 \end{equation}
 Then \eqref{system_inequality} implies
 $$\|q(t)\|^2_{\dH^2}\leq C(T(3C)^5+1)=:K,$$
 and so
 $$\|q(t)\|_{\dH^{\frac{1}{2}}}^2\leq C\eps T(K+1)+C.$$
 Hence, by choosing $\eps\leq  \frac{1}{T(K + 1)} $ we conclude 
 $$\|q(t)\|^2_{\dH^{\frac{1}{2}}}\leq 2C \quad\text{for all }t\in[0,t_0].$$ 
 Comparing this with \eqref{boot 1131}, we conclude that $t_0 = T$.

\end{proof}

For $H^s$, $s\geq 3$, we proceed by induction.
\begin{proposition}\label{jul29_1403}
    Let $ s \in \mathbb{Z}_+ $ with $ s \geq 3 $ be given. If $ q(0) \in H^s_+ $ and
    $$
    \left\| q(0)\right\|_{L^2}^2 < 2\pi
    $$
    in the focusing case,
    then for any $ T > 0 $, there exists a constants $ \eps_0, C > 0 $ such that
    $$
    \snorm{q_\eps(t)}_{H^s} \leq C
    $$
    for all $t\in [0,T]$ and $\eps\in (0,\eps_0]$.  
\end{proposition}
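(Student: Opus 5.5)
We argue by induction on $s\geq 3$, with Proposition \ref{july23_0719} as the base: for every $T>0$ it supplies $\eps_0>0$ and a bound $\sup_{t\in[0,T],\,\eps\le\eps_0}\snorm{q_\eps(t)}_{H^2}\le K_2$. Assume a uniform bound $\snorm{q_\eps(t)}_{H^{s-1}}\le K_{s-1}$ on $[0,T]\times(0,\eps_0]$. We do not need the higher conserved energies $E_{2s}$. Once $s\geq 3$, the $H^2$ control gives $q_\eps$ uniformly bounded in $W^{1,\infty}$, and this is enough to run a direct energy estimate for $\snorm{\partial^s q}_{L^2}^2$. Note also that $q_\eps(0)=P_{\le\eps^{-1/3}}q_0$ satisfies $\snorm{q_\eps(0)}_{H^s}\le\snorm{q_0}_{H^s}$, so the initial data are bounded uniformly in $\eps$.

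Differentiate $\snorm{\partial^s q}_{L^2}^2$ using \eqref{CCM2}. The linear part gives $-2\eps\snorm{\partial^{s+1}q}_{L^2}^2\le 0$, and the dispersive term $iq''$ contributes nothing. This leaves
\[
\pm4\Re\int \overline{\partial^s q}\,\partial^s\bigl(qC_+(|q|^2)'\bigr)\dx .
\]
Expand this by Leibniz. Every term in which each factor carries at most $s-1$ derivatives (apart from the extra one inside $C_+(\cdot)'$) is bounded by $C(K_{s-1})\snorm{q}_{H^s}^2$. For this we place the top-order factor in $L^2$, the others in $L^\infty$, and use $L^2$-boundedness of $C_+$ and the Gagliardo--Nirenberg inequality. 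Only two terms carry $s+1$ derivatives:
\[
\mathrm{(a)}\ \int \overline{\partial^s q}\,\partial^sq\,C_+(|q|^2)'\dx, \qquad \mathrm{(b)}\ \int \overline{\partial^s q}\,q\,C_+\bigl(\overline q\,\partial^{s+1}q+q\,\partial^{s+1}\overline q\bigr)\dx .
\]
Term (a) is fine: $\snorm{C_+(|q|^2)'}_{L^\infty}\lesssim \snorm{|q|^2}_{H^2}$, which is bounded by $K_2$.

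The main obstacle is (b), because $\partial^{s+1}$ falls on the least regular factor. For the piece containing $\partial^{s+1}q$ we write $\int \overline{\partial^s q}\,q\,C_+(\overline q\,\partial^{s+1}q)=\langle C_+(\overline q\,\partial^s q), C_+(\overline q\,\partial^{s+1}q)\rangle$, using self-adjointness of $C_+$. Then $C_+(\overline q\,\partial^{s+1}q)=\partial C_+(\overline q\,\partial^s q)-C_+(\overline q'\partial^s q)$. The first piece gives $\frac12\int\partial|C_+(\overline q\partial^sq)|^2=0$ after taking real parts. The second is bounded by $\snorm{q'}_{L^\infty}\snorm{q}_{L^\infty}\snorm{q}_{H^s}^2$. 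The piece with $\partial^{s+1}\overline q$ has the conjugate structure $\int \overline{F}\,g\,C_+(g\,\overline F{}')$, with $F=\partial^sq$ and $g=q$, which is exactly the form handled by Lemma \ref{Commutator_type}. That lemma moves the derivative off $\overline{\partial^sq}$ onto $g=q$ and yields $-\int\overline F g' C_+(g\overline F)-\int\overline F g\,C_+(g'\overline F)$. Both of these are bounded by $\snorm{q}_{W^{1,\infty}}^2\snorm{q}_{H^s}^2$. Lemma \ref{Commutator_type} is stated for $H^{3/2}_+$ functions, and $\partial^sq_\eps$ is smooth for $\eps>0$ by parabolic smoothing, so the manipulations are justified. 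If needed we apply them at a slightly positive time and pass to the limit.

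Collecting the terms gives $\frac{d}{dt}\snorm{q_\eps}_{H^s}^2\le C(K_2,K_{s-1})\snorm{q_\eps}_{H^s}^2$, where the $L^2$ part is controlled by \eqref{apr14_1446}. Grönwall's inequality then gives $\snorm{q_\eps(t)}_{H^s}^2\le e^{CT}\snorm{q_0}_{H^s}^2$ for $t\in[0,T]$ and $\eps\le\eps_0$. The constants are independent of $\eps$ and $\eps_0$ is unchanged, which closes the induction. By the blow-up alternative from Proposition \ref{CCM_eps_lwp}, this also shows that $q_\eps$ exists on all of $[0,T]$.
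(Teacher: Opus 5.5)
Your overall scheme matches the paper's: induction from the uniform $H^2$ bound of Proposition \ref{july23_0719}, a direct energy identity for $\|\partial^s q\|_{L^2}^2$, and a Leibniz split in which only (a) and (b) are top order. Term (a) and the lower-order terms are fine. So is your treatment of the $\partial^{s+1}q$ piece of (b) via $\Re\langle G,\partial G\rangle=0$ with $G=C_+(\overline q\,\partial^s q)$; it is a cleaner form of the paper's cancellation for $I_1$.

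The gap is the $\partial^{s+1}\overline q$ piece, because Lemma \ref{Commutator_type} is false. Since $\int\partial_x[\overline f g\,C_+(g\overline f)]\,dx=0$, the lemma is equivalent to
\[
\int\overline{f}'g\,C_+(g\overline f)\,dx=\langle f',gC_+(g\overline f)\rangle=0 .
\]
This fails for $\widehat f=\widehat g=\mathbf{1}_{[1,2]}$. In that case $\widehat{gC_+(g\overline f)}(\xi)=\frac{1}{2\pi}\int_0^{\xi-1}(1-\zeta)\,d\zeta>0$ for $\xi\in(1,2]$. The pairing therefore equals $-i\int_1^2\xi\,\widehat{gC_+(g\overline f)}(\xi)\,d\xi\neq0$.

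The error in the lemma's proof is the multiplier of the outer factor $\overline{f}'$. It is $-i\sigma$, so the bracket should be $i(\eta+\sigma)$. That is not a combination of the $g$-multipliers $i(\sigma-\xi)$ and $i(\xi-\eta)$. The paper's own proof does not close this term either. It says $I_2$ ``follows the approach for $I_1$'', but integrating by parts only trades $\int\overline F q\,C_+(q\overline{F}')\,dx$ for $-\int\overline{F}'q\,C_+(q\overline F)\,dx$ plus bounded terms, and no conjugate-side term cancels that.

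The bound you want is still true, but it comes from the Hardy-space frequency constraint, not from an identity. Set $F=\partial^s q$ and write $C_+(q\overline{F}')=\partial C_+(q\overline F)-C_+(q'\overline F)$. For $F,q\in L^2_+$ and $\xi\ge0$,
\[
\widehat{C_+(\overline F q)}(\xi)=\frac{1}{\sqrt{2\pi}}\int_0^\infty\widehat q(\xi+\eta)\overline{\widehat F(\eta)}\,d\eta .
\]
Since $0\le\xi\le\xi+\eta$, the derivative can be placed on $q$. The Cauchy--Schwarz argument of Lemma \ref{GL_type_original} then gives $\|\partial C_+(\overline F q)\|_{L^2}\le\frac{1}{\sqrt{2\pi}}\|F\|_{L^2}\|q\|_{\dot{H}^{3/2}}$. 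Hence
\[
\Bigl|\int\overline F\,q\,C_+(q\,\overline{F}')\,dx\Bigr|\le\|q\|_{L^\infty}\bigl(\tfrac{1}{\sqrt{2\pi}}\|q\|_{\dot{H}^{3/2}}+\|q'\|_{L^\infty}\bigr)\|F\|_{L^2}^2\lesssim K_2^2\,\|q\|_{H^s}^2 ,
\]
which is exactly the estimate you claimed for this piece. With this replacement, the rest of your argument goes through unchanged, including the Gr\"onwall step and the continuation via Proposition \ref{CCM_eps_lwp}.
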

\begin{proof}
    We will argue using induction. First, assuming inductively that $$\snorm{q(t)}_{H^{n-1}}\leq C$$ for all  $t\in [0,T]$ 
    for some $n\geq 3$. Unlike the previous proofs, we abandon $E_n$ and directly compute the evolution of the $H^n$-norm. It follows that
    \begin{align*}
        \frac{\dd}{dt} \int \vert q^{(n)}\vert^2\,\dd x&=\frac{\dd}{dt} 2\Re\int \ q^{(n)} \overline{q}^{(n)}\,\dd x\\
        &=\varepsilon \int q^{(n+2)}\overline{q}^{(n)}\,\dd x+\varepsilon\int q^{(n)}\overline{q}^{(n+2)}\,\dd x\\
        &\phantom{==}\pm 2\int \overline{q}^{(n)}\left[qC_+(q\overline{q})'\right]^{(n)} \pm q^{(n)}\left[\overline{q}C_-(q\overline{q})'\right]^{(n)}\dx.
    \end{align*}
    Notice that the first terms equals to $-2\varepsilon \|q^{(n+1)}\|^2_{L^2}\leq 0$ which is strictly non-positive, so we may only focusing on the nonlinearity now.  We assume inductively that $\left\| q\right\|_{H^{n-1}}\leq C$ $t\in[0,T]$ and $\eps\in(0,1]$. Without loss of generality, consider the term $$\pm 2\int \overline{q}^{(n)}\left[qC_+(q\overline{q})'\right]^{(n)}\,\dx,$$ and the last part will follow by exactly the same approach. Distributing the derivatives, we find
    \begin{multline*}
   2\int\overline{q}^{(n)}\left[qC_+(q\overline{q})'\right]^{(n)}\,\dx=\,2\int \overline{q}^{(n)}qC_+(q^{(n+1)}\overline{q})\,\dx\,+\,2\int \overline{q}^{(n)}qC_+(q\overline{q}^{(n+1)})\,\dx\,\\
   +\,2\int \overline{q}^{(n)}q^{(n)}C_+(q\overline{q})'\,\dx+\sum_{\substack{\alpha_1,\alpha_2,\alpha_3\geq 0\\ \alpha_1+\alpha_2+\alpha_3=n+1}}c_\alpha\int \overline{q}^{(n)}q^{(\alpha_1)}C_+(q^{(\alpha_2)}\overline{q}^{(\alpha_3)})\,\dx.
    \end{multline*}
Writing for short, we define
\begin{align*}
    I_1&\coloneqq \,2\int \overline{q}^{(n)}qC_+(q^{(n+1)}\overline{q})\,\dx,\\
    I_2&\coloneqq \,2\int \overline{q}^{(n)}qC_+(q\overline{q}^{(n+1)})\,\dx,\\
    I_3&\coloneqq \,2\int \overline{q}^{(n)}q^{(n)}C_+(q\overline{q})'\,\dx,\\
    I_4&\coloneqq \sum_{\substack{\alpha_1,\alpha_2,\alpha_3\geq 0\\ \alpha_1+\alpha_2+\alpha_3=n+1}}c_\alpha\int \overline{q}^{(n)}q^{(\alpha_1)}C_+(q^{(\alpha_2)}\overline{q}^{(\alpha_3)})\,\dx.
\end{align*}

For $I_1$, applying integration by parts once is suffices to reduce the order of $ q^{(n+1)} $. It follows that
{\small
   \begin{align*}
     \int q \overline{q}^{(n)} C_+ (q^{(n+1)} \overline{q}) \, \dx=-\int q\overline{q}^{(n+1)}C_+(q^{(n)}\overline{q})-\int \overline{q}^{(n)}q'C_+(q^{(n)}\overline{q})\,\dx-\int \overline{q}^{(n)}qC_+(q^{(n)}\overline{q}')\,\dx.
   \end{align*}}
Writing for short that
\begin{align*}
    \mathcal{A}&\coloneqq -\int q\overline{q}^{(n+1)}C_+(q^{(n)}\overline{q})\,\dx,\\
    \mathcal{B}&\coloneqq  -\int \overline{q}^{(n)}q'C_+(q^{(n)}\overline{q})\,\dx,\\
    \mathcal{C}&\coloneqq -\int \overline{q}^{(n)}qC_+(q^{(n)}\overline{q}')\,\dx.
\end{align*}
   Notice that $\mathcal{B}$ and $\mathcal{C}$ could be dealt by the same process as in $I_4$. For $A$, there is the same scenario happens when we cope with $$2\int q^{(n)}\left[\overline{q}C_-(q\overline{q})'\right]^{(n)}\,\dx,$$ therefore, it produces the term
   $$\int q^{(n)}\overline{q}C_-(q\overline{q}^{(n+1)})\,\dx.$$
   Combine with $\mathcal{A}$ we get
    \begin{align*}
        \int q^{(n)}\overline{q}C_-(q\overline{q}^{(n+1)})\,\dx+\mathcal{A}&= \int q^{(n)}\overline{q}C_-(q\overline{q}^{(n+1)})\,\dx-\int q\overline{q}^{(n+1)}C_+(q^{(n)}\overline{q})\,\dx\\
        &=\int C_+(q^{(n)}\overline{q})C_-(q\overline{q}^{(n+1)})\,\dx-\int C_-(q\overline{q}^{(n+1)})C_+(q^{(n)}\overline{q})\,\dx\\
        &=0.
    \end{align*}
  
   Term $I_2$ follows the approach for $I_1.$
   For $I_3$, it is following that there is another term exists symmetrically such as
$$I_{3_B}=\int q^{(n)}\overline{q}^{(n)}C_-(q'\overline{q})\,\dx.$$
Therefore, combine $I_3$ with this we obtain:
\begin{align*}
    \vert I_3+I_{3_\mathcal{B}}\vert&= \left \vert\int \overline{q}^{(n)}q^{(n)}C_+(q'\overline{q})\,\dx+\int q^{(n)}\overline{q}^{(n)}C_-(q'\overline{q})\,\dx\right\vert\\
    &=\left\vert \int \overline{q}^{(n)}q^{(n)}q'\overline{q}\,\dx\right\vert\\
    &\leq \left\| q^{(n)}\right\|_{L^2}^2\left\| q'\right\|_{L^\infty}\|\overline{q}\|_{L^\infty}\\
    &\lesssim_{\left\| q\right\|_{H^{n-1}}}\left\| q\right\|^2_{H^n}+1.
\end{align*}

For $I_4$, we have two possibilities as follows:
\begin{itemize}
    \item $\alpha_1\leq n-2$ and ($\alpha\leq n$, $\alpha_3\leq n-2$) or ($\alpha_2\leq n-2,$ $\alpha_3\leq n$);
    \item $\alpha_1=n-1$ and $\alpha_2+\alpha_3=2$.
\end{itemize}
We start from the first possibility, in this case,
\begin{align*}
    \left|\int \overline{q}^{(n)}q^{(\alpha_1)}C_+(q^{(\alpha_2)}\overline{q}^{(\alpha_3)})\right|&\leq \| \overline{q}^{(n)}q^{(\alpha_1)}\|_{L^2}\|C_+(q^{(\alpha_2)}\overline{q}^{(\alpha_3)})\|_{L^2}.
\end{align*}
We place $q^{(\alpha_1)}$ and the term with the lower derivative order, $\min\{\alpha_2,\alpha_3\}$ in the $L^\infty$ norm. Without loss of generality, say $\alpha_2<\alpha_3$, by Sobolev embedding and Gagliardo--Nirenberg inequality it follows that 
\begin{align*}
    \left|\int \overline{q}^{(n)}q^{(\alpha_1)}C_+(q^{(\alpha_2)}\overline{q}^{(\alpha_3)})\right|&\leq \| \overline{q}^{(n)}q^{(\alpha_1)}\|_{L^2}\|C_+(q^{(\alpha_2)}\overline{q}^{(\alpha_3)})\|_{L^2}\\
    &\leq \|\overline{q}^{(n)}\|_{L^2}\|\overline{q}^{(\alpha_3)}\|_{L^2}\|q^{(\alpha_1)}\|_{L^\infty}\|q^{(\alpha_2)}\|_{L^\infty}\\
    &\leq \left(\left\| q^{(n)}\right\|^2_{L^2}+\left\| q^{(n)}\right\|_{L^2}\left\| q\right\|_{H^{n-1}}\right)\left\| q\right\|^2_{H^{n-1}}\\
    &\lesssim \left(\left\| q^{(n)}\right\|^2_{L^2}+1\right)\left(\left\| q\right\|^2_{H^{n-1}}+\left\| q\right\|^4_{H^{n-1}}\right).
\end{align*}
Next, we consider the second possibility, in this case
\begin{align*}
    \left|\int \overline{q}^{(n)}q^{(\alpha_1)}C_+(q^{(\alpha_2)}\overline{q}^{(\alpha_3)})\right|&\leq \| \overline{q}^{(n)}q^{(\alpha_1)}\|_{L^2}\|C_+(q^{(\alpha_2)}\overline{q}^{(\alpha_3)})\|_{L^2}.
\end{align*}
Place the term $\overline{q}^{(n)}$ and $\max\{\alpha_2,\alpha_3\}$ in $L^2$ norm and everything else goes to $L^\infty$ norm. Without loss of generality, say $\max\{\alpha_2,\alpha_3\}=\alpha_2$. Therefore, $\alpha_3\leq 1$ and it follows that
\begin{align*}
    \left|\int \overline{q}^{(n)}q^{(\alpha_1)}C_+(q^{(\alpha_2)}\overline{q}^{(\alpha_3)})\right|&\leq \| \overline{q}^{(n)}q^{(\alpha_1)}\|_{L^2}\|C_+(q^{(\alpha_2)}\overline{q}^{(\alpha_3)})\|_{L^2}\\
    &\leq \|\overline{q}^{(n)}\|_{L^2}\|q^{(n-1)}\|_{L^\infty}\|q^{(\alpha_2)}\|_{L^2}\|\overline{q}^{(\alpha_3)}\|_{L^\infty}\\
    &\lesssim \|\overline{q}^{(n)}\|_{L^2}\|q^{(n-1)}\|_{H^1}\left\| q\right\|_{H^2}\|q^{(\alpha_3)}\|_{H^1}\\
    &\lesssim \left\| q^{(n)}\right\|^2_{L^2}\left\| q\right\|^2_{H^2}\\
    &\lesssim \left(\left\| q^{(n)}\right\|^2_{L^2}+1\right)\left(\left\| q\right\|^2_{H^{n-1}}+\left\| q\right\|^4_{H^{n-1}}\right).
\end{align*}
Therefore, we conclude $$\frac{d}{dt}\left(\left\| q^{(n)}\right\|^2_{L^2}\right)\leq\left(\left\| q^{(n)}\right\|^2_{L^2}+1\right) C(\left\| q\right\|_{H^{n-1}}).$$
Since this is almost conservation law and by induction $\left\| q\right\|_{H^{n-1}}<C$, by Grönwall inequality it follows that:
$$\snorm{q(t)}^2_{\dH^n}\leq \left(\|q^{(n)}(0)\|^2_{L^2}+1\right)e^{ Ct}-1.$$
\end{proof}
\begin{corollary}\label{CCM_eps_gwp}
    Fix an integer $s\geq 2$.  Then \eqref{CCM2} is globally well-posed in $H^s_+$, with $\|q\|_{L^2}^2 < 2\pi$ in the focusing case.
\end{corollary}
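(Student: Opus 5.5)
The argument is a continuation (blow-up alternative) argument. It combines the local theory of Proposition \ref{CCM_eps_lwp} with a priori bounds that are uniform in time on compact intervals. Fix $\varepsilon\in(0,1]$, $s\geq2$, and data $q(0)\in H^s_+$, with $\|q(0)\|_{L^2}^2<2\pi$ in the focusing case. Proposition \ref{CCM_eps_lwp} gives a time of existence $T(A)$ that depends only on an upper bound $A$ for $\|q(0)\|_{H^s}$. Let $T^*$ be the maximal existence time. If $T^*<\infty$, then $\|q(t)\|_{H^s}\to\infty$ as $t\uparrow T^*$; otherwise we restart the local theory at a time $T^*-T(A)/2$ and extend past $T^*$. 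So it suffices to show that $\|q(t)\|_{H^s}$ stays bounded on $[0,T]$ for every finite $T$. Uniqueness and joint continuity on every compact interval are inherited from Proposition \ref{CCM_eps_lwp} by patching local solutions together.

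For the a priori bound we follow Section 4, but with $\varepsilon$ fixed. The $L^2$ norm is nonincreasing by \eqref{apr14_1446}, so the mass condition persists. Next we control $\dot H^{1/2}$ and $H^2$ together. Proposition \ref{jul31_1405}, Proposition \ref{may6_0905} and Corollary \ref{may13_1439} give the coupled system \eqref{system_inequality}. The estimates behind it only use the equation, the coercivity from Lemma \ref{GL_type_original}, and Gagliardo--Nirenberg, so they apply to any $H^s_+$ data with the mass condition, not just the frequency-truncated data \eqref{q0}. When the data are only in $H^2$, we first run the argument for mollified data $P_{\leq N}q(0)$ and then pass to the limit using the continuous dependence from Proposition \ref{CCM_eps_lwp}.

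The main obstacle is that the bootstrap in Proposition \ref{july23_0719} closes only for $\varepsilon\le \varepsilon_0(T)$. For a fixed $\varepsilon$ and a long interval $[0,T]$, I would avoid this by chaining short intervals. The constants $C_1$ in \eqref{system_inequality} are driven by initial energies at each restart, and the restart time $t_k$ can be chosen so that $\varepsilon(t_{k+1}-t_k)(K_k+1)\le 1$. That alone gives finite growth on each step, but the steps could shrink. A cleaner route is to use the dissipation, which we can afford since $\varepsilon$ is fixed. In \eqref{aug7_1613} we keep the term $-2\varepsilon\|q\|_{\dot H^{3/2}}^2$. We bound $|I|\lesssim \|q\|_{\dot H^{3/2}}\,\| |q|^2 q\|_{\dot H^{1/2}}$ and interpolate between $L^2$ and $\dot H^{3/2}$, so that the error is absorbed by the dissipation up to a term controlled by a power of $\|q\|_{\dot H^{1/2}}^2$ times a constant depending on $M_0$. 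Grönwall then yields a $\dot H^{1/2}$ bound on $[0,T]$ with no coupling to $H^2$. The second inequality of \eqref{system_inequality} then bounds $H^2$ on $[0,T]$. If this absorption fails, the fallback is the chaining argument: a finite but $\varepsilon$-dependent bound on $[0,T]$ is enough for global existence.

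Once $H^2$ is controlled on $[0,T]$, the induction in Proposition \ref{jul29_1403} gives the $H^n$ bounds for $3\le n\le s$ via Grönwall. That proof uses only the $H^{n-1}$ bound and no smallness of $\varepsilon$. Hence $\|q(t)\|_{H^s}$ is bounded on every $[0,T]$, so $T^*=\infty$, and this proves global well-posedness of \eqref{CCM2} in $H^s_+$.
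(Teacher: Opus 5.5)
Your continuation framework is the same as the paper's. The paper just iterates Proposition~\ref{CCM_eps_lwp} using the bounds of Proposition~\ref{jul29_1403}. You are right that those bounds hold only for $\eps\le\eps_0(T)$ and for the data \eqref{q0}. So the paper's argument really gives existence on $[0,T]$ for $\eps\le\eps_0(T)$, which is all Section~5 needs.

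The gap is in your remedy. Equation \eqref{CCM2} is invariant under $q\mapsto\lambda^{1/2}q(\lambda^2t,\lambda x)$ with $\eps$ unchanged. Under this scaling, $\eps I$ and $\eps\|q\|_{\dH^{3/2}}^2$ both scale like $\lambda^3$, while $\|q\|_{L^2}$ is invariant.
\begin{itemize}
\item Interpolating only between $L^2$ and $\dH^{3/2}$ can therefore give at best $|I|\lesssim M_0\|q\|_{\dH^{3/2}}^2$. This leaves no room for Young's inequality and can be absorbed only for small mass. In the focusing case it genuinely fails below $2\pi$: for $q=a/(x+i)$ one computes $-I=\pi a^4$ and $2\|q\|_{\dH^{3/2}}^2=\tfrac32\pi a^2$. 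Hence $\tfrac{d}{dt}E_1>0$ as soon as $\|q\|_{L^2}^2=\pi a^2>\tfrac32\pi$.
\item If you bring in $\dH^{1/2}$ to leave a remainder, scaling forces that remainder to be $C(M_0)\|q\|_{\dH^{1/2}}^6$. You then get $\tfrac{d}{dt}E_1\lesssim\eps E_1^3$, which is superlinear, so Gr\"onwall gives nothing global. The comparison ODE blows up at time $\sim(\eps E_1(0)^2)^{-1}$, which is exactly the $\eps T\lesssim1$ restriction you wanted to remove.
\end{itemize}
Your chaining fallback does not close either, for the reason you give yourself: the restart constants grow and the step lengths may be summable.

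The missing ingredient is the space--time bound already contained in the mass identity of Section~4:
\[
\|q(t)\|_{L^2}^2+2\eps\int_0^t\|q'\|_{L^2}^2\,ds=\|q(0)\|_{L^2}^2 .
\]
Since $I=-\int\bigl((|q|^2)'\bigr)^2-2\int|q|^2|q'|^2$, you have $|I|\le6\|q\|_{L^\infty}^2\|q'\|_{L^2}^2$. Using $\|q\|_{L^\infty}^2\le\|q\|_{L^2}\|q'\|_{L^2}$ and $\|q'\|_{L^2}^2\le\|q\|_{\dH^{1/2}}\|q\|_{\dH^{3/2}}$, this becomes $|I|\le6M_0^{1/2}\|q'\|_{L^2}\|q\|_{\dH^{1/2}}\|q\|_{\dH^{3/2}}$. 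After Young's inequality and the coercivity from Lemma~\ref{GL_type_original}, you get $\tfrac{d}{dt}E_1\le C(M_0)\,\eps\|q'\|_{L^2}^2E_1$. Now the Gr\"onwall weight is integrable in time. Hence $E_1(t)\le E_1(0)e^{C(M_0)M_0}$ for all $t\ge0$, uniformly in $\eps$. In the defocusing case it is even simpler: $I\le0$, so $E_1$ is nonincreasing.

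With this global $\dH^{1/2}$ bound the remaining steps of your plan apply with no smallness of $\eps$. These are the $E_4$ bound for $H^2$, the $H^n$ induction, and the continuation argument.
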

\begin{proof}
By Proposition \ref{CCM_eps_lwp} and the \textit{a-priori} estimates obtained in the previous proposition, fix $A>0$ and consider initial data $q(0)\in H^3_+$.  
We claim that the corresponding solution exists globally in time in $H^s_+$ for all integers $s\geq 3$.  

To see this, fix an arbitrary time horizon $T>0$. From the uniform \textit{a-priori} bounds in the previous proposition, there exists
$$
R = R\bigl(T,\|q(0)\|_{H^s_+}\bigr) > 0
$$
such that
$$
\sup_{t\in[0,T]}\snorm{q(t)}_{H^s_+} \leq R .
$$
Now, by Proposition \ref{CCM_eps_lwp}, there exists a local existence time $T_0\ll 1$, depending only on $R$, such that whenever $\|q(t_0)\|_{H^s_+}\leq R$ at some time $t_0$, the solution can be extended to the interval $[t_0,t_0+T_0]$ while remaining in $B_R(0)$.  

Thus the local solution can be iterated with lifespan $T_0\ll 1$ to cover $[0,T]$ in finitely many steps, which yields the desired global well-posedness.  
\end{proof}

%-----------------------------------------------------------------------------------------------------------------%
\section{Convergence of the approximations}
Let $g\in H^s$ with $s\geq 2$, and fix $\varepsilon\in(0,1]$. 
We define a regularized version of $g$ by
\begin{equation}\label{smooth_g}
    g_\varepsilon := P_{\leq \varepsilon^{-\frac{1}{3}}}g.
\end{equation}
By construction, $g_\varepsilon\in H^\infty$, and hence there exists a unique smooth solution $q_\varepsilon(x,t)=q(x,t;\varepsilon)$ with 
$q_\varepsilon\in C([0,T];H^r_+)$ for any $r\geq 3$. Next, we prove that $q_\eps$ is a Cauchy sequence in $C_tH^s_+$.  Say $w=q_\varepsilon-q_\delta$ where $\delta\leq \varepsilon$, and to prove $q_\eps$ is Cauchy in $C_tH^s_+$, it suffices to show that $w\to 0$ in $C_tH^s_+$ topology as $\eps\to 0$.
It is clear that $w$ satisfies the partial differential equation:
\begin{equation}
iw_t= (-1 + i\varepsilon) w'' + i(\varepsilon - \delta) (q_\delta)'' \pm 2i \left[ w C_{+}(|q_\varepsilon|^2)' + q_\delta C_{+}\left( (w\overline{w} + w\overline{q_\delta} + q_\delta\overline{w})' \right) \right].
\end{equation}
Here is a simple result giving bounds on various norms of $q_\varepsilon$ and $q_\delta$.
\begin{lemma}\label{convergence_bound}
Let $g\in H^s$ for some $s\geq 2$ and let $g_\varepsilon$ be a regularized version of $g$ defined in \eqref{smooth_g}, then as $\varepsilon \to 0$ the following bounds hold,
\begin{equation}
    \begin{array}{rcll}
\left\|g_{\eps}\right\|_{s+k} & =&\mathcal{O}\left(\eps^{- \frac{k}{3}}\right) \quad &\text { for } \quad k=1,2, \ldots \\
\left\|g-g_{\eps}\right\|_{s-k} & =&o\left(\varepsilon^{\frac{k}{3}}\right) \quad &\text { for } \quad k=1,2, \ldots \\
\left\|g-g_{\eps}\right\|_{s} & =&o(1) &
\end{array}
\end{equation} 
and the first bound holds uniformly on bounded subsets $H^s$ whereas the last two bounds hold uniformly for compact subsets of $H^s$.
\end{lemma}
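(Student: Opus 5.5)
The plan is to work entirely on the Fourier side. $P_{\leq N}$ with $N=\eps^{-1/3}$ is a Fourier multiplier that truncates to $|\xi|\leq N$ (a sharp cutoff, or a smooth one with symbol $0\le\chi(\xi/N)\le1$ equal to $1$ on $|\xi|\le N$ and supported in $|\xi|\le 2N$). Set $\langle\xi\rangle=(1+\xi^2)^{1/2}$, so that $\|f\|_{r}^2=\int\langle\xi\rangle^{2r}|\wh f(\xi)|^2\,d\xi$.

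\emph{First bound.} Since $\wh{g_\eps}$ is supported in $|\xi|\le 2N$, we have $\langle\xi\rangle^{2k}\le (1+4N^2)^k\lesssim N^{2k}=\eps^{-2k/3}$ there, using $\eps\le1$ so $N\ge1$. This gives
\[
\|g_\eps\|_{s+k}^2\le \int_{|\xi|\le 2N}\langle\xi\rangle^{2k}\langle\xi\rangle^{2s}|\wh g|^2\,d\xi\lesssim \eps^{-2k/3}\|g\|_s^2 .
\]
The constant depends only on $k$ and $\|g\|_s$, so the bound is uniform on bounded subsets of $H^s$.

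\emph{Third bound.} Here $\|g-g_\eps\|_s^2\le\int_{|\xi|>N}\langle\xi\rangle^{2s}|\wh g|^2\,d\xi=:\rho_g(N)^2$. This is the tail of an integrable function, so it tends to $0$ as $N\to\infty$ by dominated convergence.

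\emph{Second bound.} On $|\xi|>N$ we have $\langle\xi\rangle^{-2k}\le N^{-2k}=\eps^{2k/3}$, so
\[
\|g-g_\eps\|_{s-k}^2\le \eps^{2k/3}\int_{|\xi|>N}\langle\xi\rangle^{2s}|\wh g|^2\,d\xi=\eps^{2k/3}\rho_g(N)^2 .
\]
This is $o(\eps^{2k/3})$, and taking square roots gives the claim.

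\emph{Uniformity on compact sets.} This is the only step needing care, and the obstacle is showing that $\sup_{g\in K}\rho_g(N)\to0$ for compact $K\subset H^s$. Given $\eta>0$, cover $K$ by finitely many $H^s$-balls $B(g_j,\eta)$. The map $g\mapsto\rho_g(N)$ is the $L^2$ norm of $\mathbf 1_{|\xi|>N}\langle\xi\rangle^s\wh g$, so it is $1$-Lipschitz in the $H^s$ norm. Hence $\rho_g(N)\le \eta+\max_j\rho_{g_j}(N)$, and the maximum is below $\eta$ for $N$ large. (Equivalently, this is the Kolmogorov--Riesz characterization: compact sets have uniformly small high-frequency tails.) The first bound needs only boundedness of $\|g\|_s$, as noted above. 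Finally, membership in $H^\infty$ follows because every $\|g_\eps\|_{s+k}$ is finite.
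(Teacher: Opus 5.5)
Your proposal is correct and follows essentially the same route as the paper. The first bound comes from the Fourier support of $\widehat{g_\eps}$ in $|\xi|\lesssim \eps^{-1/3}$, the third from dominated convergence on the high-frequency tail, and the second from the gain $\langle\xi\rangle^{-2k}\le \eps^{2k/3}$ on that tail. Your uniformity argument (a finite $\eta$-net plus the $1$-Lipschitz dependence of the tail on $g$ in $H^s$) is the compact-set form of the paper's $\frac{\eta}{3}$ argument along a convergent sequence $g^n\to g$, and you keep the frequency scale $N=\eps^{-1/3}$ consistent where the paper's write-up has some $\eps$ versus $\eps^{1/3}$ slips.
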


\begin{proof}
    The argument follows from a direct computation of the Sobolev norm but in frequency space. It follows that
    \begin{align*}
        \left\|g_\eps\right\|^2_{H^{s+k}}&=\int_{\xi>0}|\widehat{g_\eps}|^2\,\left(1+|\xi|\right)^{2(s+k)}\ \dd\xi\\
        &\leq \sup_{|\xi|\leq \eps^{-1}}(1+|\xi|)^{2k}\int (1+|\xi|)^{2s}|\widehat{g_\eps}|^2\dd\xi\\
        &\lesssim \eps^{-\frac{2k}{3}}\left\|g_\eps\right\|^2_{H^s}.
    \end{align*}
    
    For the third part, we will argue by using Lebesgue Dominated Convergence theorem. It follows that 
    \begin{align*}
        \left\|g-g_\eps\right\|_{H^s}^2=\int_{\xi>0}\psi^2(\eps^{-\frac13}\xi)(1+|\xi|)^{2s}|\widehat{g}|^2\,\dd\xi,
    \end{align*}
    where $1-\psi$ is the symbol for $P_{-\eps^{-\frac13}}$.
    which is bounded. By Lebesgue Dominated Convergence theorem it follows that $$\left\|g-g_\eps\right\|_{H^s}^2\to 0\quad \text{as}\quad \eps\to 0.$$ 
    It remains to show if $g_n\to g$ in $H^s$ then implies $\left\|g^n_\eps-g_\eps\right\|_{H^s}\to 0$ as $\eps\to 0$ uniformly for $n=1,2,3,\dots.$ To show this, fix $\eta>0$ and for all $n$ it follows that
    \begin{align*}
        \left\|g^n_\eps-g_\eps\right\|_{H^s}^2&=\left\|(g^n-g)_\eps\right\|_{H^s}^2\\
        &\leq \int_{\xi>0} (1+|\xi|)^{2s}|\widehat{g^n}-\widehat{g}|^2\,\dd\xi\\
        &=\left\|g^n-g\right\|^2_{H^s}.
    \end{align*}
    Let a sufficiently large $N\in \mathbb{Z}$ be given so that if $n\geq N$ we have $\left\|g^n-g\right\|_{H^s}<\frac{\eta}{3}$. Then, fix a $\eps_0$ sufficiently small so that $\left\|g^j_\eps-g^j\right\|_{H^s}<\frac{\eta}{3}$ for $1\leq j\leq N$ and $\left\|g_\eps-g\right\|_{H^s}<\frac{\eta}{3}$ for a $\epsilon$ so small and $\eps\in(0,\epsilon]$. Thus, by triangle inequality we have $\left\|g^n_\eps-g^n\right\|_{H^s}<\eta$ and it holds for all $n$.

    For the second part, we have 
    \begin{align*}
        \|g-g_\varepsilon\|_{H^{s-k}}^2&=\int_{\xi>0} \frac{(1+|\xi|)^{2(s-k)}}{(1+|\xi|)^{2s}}\ \psi^2(\varepsilon\xi)(1+|\xi|)^{2s}|\widehat{g}|^2\dd\xi\\
        &\le \Big(\sup_{\xi\geq \eps^{-1}}(1+|\xi|)^{-2k}\Big)\ \int_{\xi>0} \psi(\varepsilon\xi)\,(1+|\xi|)^{2s}|\hat g(\xi)|^2\,\dd\xi\\
        &\le C\varepsilon^{\frac{2k}{3}} \int_{\xi>0} \psi(\varepsilon\xi)^2\,(1+|\xi|)^{2s}|\hat g(\xi)|^2\,\dd\xi
    \end{align*}
     where $C$ is a constant that does not depend on $g$ and $\eps$. Since $\|g-g_\eps\|_{H^s}=o(1)$ it implies that $$\|g-g_\varepsilon\|_{H^{s-k}}^{2}\le\big(\mathcal{O}(\varepsilon^{\frac{2k}{3}})\big)\cdot \big(o(1)\big)
= o(\varepsilon^{\frac{2k}{3}}).$$
\end{proof}

We next prove that $\{q_\eps \}$ is Cauchy in $L^2$ topology.
\begin{proposition}\label{aug7_0917}
Let $s\ge 2$ and $g\in H^s_+(\R)$. For $\varepsilon\in(0,1]$ set $g_\varepsilon:=P_{\le \varepsilon^{-\frac13}}g\in H^\infty_+$ and let $q_\varepsilon\in C([0,T];H^s_+)$ be the corresponding solution to \eqref{smooth_g}. Then, for any $T>0$, we have
$$
\sup_{t\in[0,T]}\|q_\varepsilon(t)-q_\delta(t)\|_{L^2}
\ \lesssim_T\,\varepsilon,
$$
uniformly in $0<\delta\le\varepsilon\le1$ and compact sets of $g\in H^s_+$.
In particular $\{q_\varepsilon\}$ is Cauchy in $C([0,T];L^2_+)$.
\end{proposition}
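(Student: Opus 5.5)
The plan is a direct $L^2$ energy estimate for $w=q_\varepsilon-q_\delta$, closed with Gr\"onwall. Fix $T>0$. By Proposition~\ref{july23_0719}, both $q_\varepsilon$ and $q_\delta$ are bounded in $H^2$ on $[0,T]$, uniformly for $0<\delta\le\varepsilon\le\varepsilon_0$ and for $g$ in a compact subset of $H^s_+$. The constant in that proposition depends only on $\|g\|_{H^2}$ and on the mass condition, and both are uniform on compact sets. Parameters $\varepsilon\in(\varepsilon_0,1]$ only affect the implicit constant. By Sobolev embedding, $\|q_\varepsilon\|_{W^{1,\infty}}$, $\|q_\delta\|_{W^{1,\infty}}$ and $\|w\|_{W^{1,\infty}}$ are then bounded by a constant $K=K(T)$. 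Pairing the equation for $w$ with $\overline w$ and taking real parts gives
\begin{equation*}
\frac{d}{dt}\|w\|_{L^2}^2=-2\varepsilon\|w'\|_{L^2}^2+2(\varepsilon-\delta)\operatorname{Re}\int \overline w\,q_\delta''\dx\pm 4\operatorname{Re}\int \overline w\Bigl[wC_+(|q_\varepsilon|^2)'+q_\delta C_+\bigl((|w|^2+w\overline{q_\delta}+q_\delta\overline w)'\bigr)\Bigr]\dx .
\end{equation*}
The dispersive term $-w''$ contributes nothing, and the dissipative term is nonpositive, so I discard it. The forcing term is where the rate $\varepsilon$ comes from:
\begin{equation*}
\Bigl|(\varepsilon-\delta)\int\overline w q_\delta''\Bigr|\le \varepsilon\|w\|_{L^2}\|q_\delta\|_{H^2}\lesssim \varepsilon\|w\|_{L^2}.
\end{equation*}

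Next I bound the nonlinear terms by $C(K)\|w\|_{L^2}^2$. The goal is that no derivative is left on a factor of $w$ sitting in $L^2$.
\begin{itemize}
\item The term $\int|w|^2C_+(|q_\varepsilon|^2)'$ is bounded by $\|w\|_{L^2}^2\|(|q_\varepsilon|^2)'\|_{L^2}$ through Cauchy--Schwarz, pairing $\overline w w\in L^1\cap L^2$.
\item The quadratic term $\int \overline w q_\delta C_+(w'\overline w+w\overline w')$ is controlled by $\|w\|_{L^2}^2\|q_\delta\|_{L^\infty}\|w'\|_{L^\infty}\lesssim K^2\|w\|_{L^2}^2$, using the uniform $H^2$ bound on $w$.
\item Whenever the derivative falls on $q_\delta$, the term is trivially $\lesssim K^2\|w\|^2_{L^2}$.
\end{itemize}
Two dangerous terms remain. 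The first is
\begin{equation*}
\operatorname{Re}\int\overline w q_\delta C_+(\overline{q_\delta}w')\dx=\operatorname{Re}\langle C_+(\overline{q_\delta}w),C_+(\overline{q_\delta}w')\rangle .
\end{equation*}
Writing $\overline{q_\delta}w'=(\overline{q_\delta}w)'-\overline{q_\delta}'w$ and setting $h:=C_+(\overline{q_\delta}w)$, I use $\operatorname{Re}\langle h,h'\rangle=0$. What remains is $\lesssim K^2\|w\|_{L^2}^2$. The second is $\int\overline w q_\delta C_+(q_\delta\overline w')$. Here I apply Lemma~\ref{Commutator_type} with $f=w$ and $g=q_\delta$, which moves the derivative off $\overline w$ and onto $q_\delta$. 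Both resulting terms are again $\lesssim K^2\|w\|_{L^2}^2$ by H\"older's inequality and $L^2$-boundedness of $C_+$.

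Collecting these bounds, $y(t):=\|w(t)\|_{L^2}$ satisfies $\frac{d}{dt}y^2\le C\varepsilon y+Cy^2$, hence $\frac{d}{dt}y\le C\varepsilon+Cy$ (rigorously via $\sqrt{y^2+\eta^2}$ with $\eta\to0$). Gr\"onwall then gives
\begin{equation*}
\sup_{[0,T]}\|w\|_{L^2}\le e^{CT}\bigl(\|g_\varepsilon-g_\delta\|_{L^2}+CT\varepsilon\bigr).
\end{equation*}

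The remaining issue is the initial difference, and I expect it to be the main obstacle. By Lemma~\ref{convergence_bound} with $k=s$,
\begin{equation*}
\|g_\varepsilon-g_\delta\|_{L^2}\le\|g-g_\varepsilon\|_{L^2}+\|g-g_\delta\|_{L^2}=o(\varepsilon^{s/3}),
\end{equation*}
uniformly on compact sets. This is $\lesssim\varepsilon$ as soon as $s\ge3$, which gives the stated bound $\lesssim_T\varepsilon$ in that range. For $s=2$ this argument only produces $o(\varepsilon^{2/3})+O(\varepsilon)$. That is still enough for the final claim that $\{q_\varepsilon\}$ is Cauchy in $C([0,T];L^2_+)$, but not for the linear rate. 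I would first try to recover the rate at $s=2$ by exploiting that $g_\varepsilon-g_\delta$ is supported at frequencies $\gtrsim\varepsilon^{-1/3}$, but I do not see how to get $O(\varepsilon)$ at $s=2$, so the stated rate really rests on $s\ge3$.
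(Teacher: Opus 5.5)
Your scheme matches the paper's: an $L^2$ energy identity for $w$, the dissipative term discarded, the forcing term $\lesssim\eps\|w\|_{L^2}$, the nonlinear terms $\lesssim\|w\|_{L^2}^2$, then Gr\"onwall. There is one genuine gap. Your step for $X:=\int\overline w\,q_\delta\,C_+(q_\delta\overline w')\dx$ fails because Lemma~\ref{Commutator_type} is false as stated. Since $C_+$ commutes with $\partial_x$, integration by parts gives
\[
\int\overline f gC_+(g\overline f')\dx=-\int\overline f' gC_+(g\overline f)\dx-\int\overline f g'C_+(g\overline f)\dx-\int\overline f gC_+(g'\overline f)\dx .
\]
The lemma is therefore equivalent to $\int\overline f' g\,C_+(g\overline f)\dx=0$. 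Take $f=g=(x+i)^{-1}\in H^\infty_+$. Then $C_+(g\overline f)=\tfrac{i}{2}(x+i)^{-1}$, and the integral equals $-\tfrac{i}{2}\int(1+x^2)^{-2}\dx=-\tfrac{i\pi}{4}\neq0$. The error in the lemma's Fourier proof is that the outer factor $\overline f'$ carries the symbol $-i\sigma$, not $+i\sigma$.

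So the lemma does not move the derivative off $\overline w$, and the paper's estimate of its term $\mathcal C$ has the same defect. The bound $|X|\lesssim\|w\|_{L^2}^2$ is nevertheless true, by a different argument. The factor $\overline w'$ has frequencies $\eta\le0$ and $q_\delta$ has frequencies $\xi-\eta\ge0$, so on the range $\xi\ge0$ of $C_+$ one has $|\eta|\le\xi-\eta$. Hence $|\widehat{C_+(q_\delta\overline w')}(\xi)|\le(2\pi)^{-1/2}\bigl(|\widehat{q_\delta'}|*|\widehat w(-\,\cdot)|\bigr)(\xi)$. Young's inequality then gives $\|C_+(q_\delta\overline w')\|_{L^2}\le(2\pi)^{-1/2}\|\widehat{q_\delta'}\|_{L^1}\|w\|_{L^2}\lesssim\|q_\delta\|_{H^2}\|w\|_{L^2}$, so $|X|\lesssim\|q_\delta\|_{H^2}^2\|w\|_{L^2}^2$. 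With this replacement your argument closes.

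The rest is sound, and in two places more careful than the paper.

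Your treatment of $\operatorname{Re}\int\overline w\,q_\delta\,C_+(\overline{q_\delta}w')\dx$ through $\operatorname{Re}\langle h,h'\rangle=0$ is correct. The frequency trick above does not apply to that term. The paper cites Lemma~\ref{Commutator_type} there, although the term does not even have the lemma's form.

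Your diagnosis at $s=2$ is also right, and it is a defect of the statement itself. The paper obtains $\|g_\eps-g_\delta\|_{L^2}\lesssim\eps$ from Lemma~\ref{convergence_bound}, which only gives $o(\eps^{s/3})$. At $s=2$ the rate already fails at $t=0$. For $\widehat g(\xi)=c\langle\xi\rangle^{-3}\mathbf 1_{\xi\ge0}$ with $c$ small, $g\in H^2_+$, and $\|g_\eps-g_\delta\|_{L^2}\to\|g_\eps-g\|_{L^2}\gtrsim\eps^{5/6}$ as $\delta\to0$, which is not $O(\eps)$. The rate is only used later with $n\ge3$, so assuming $s\ge3$ costs nothing.

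Two minor repairs are needed:
\begin{enumerate}
\item The term $\int|w|^2C_+(|q_\eps|^2)'\dx$ needs $\|C_+(|q_\eps|^2)'\|_{L^\infty}\lesssim\|q_\eps\|_{H^2}^2$, obtained via $H^1\subset L^\infty$. Cauchy--Schwarz against $\|(|q_\eps|^2)'\|_{L^2}$ only yields $\|w\|_{L^4}^2$.
\item For $\eps\in(\eps_0,1]$, Proposition~\ref{july23_0719} gives no $H^2$ bound. There, use mass monotonicity: $\|w(t)\|_{L^2}\le2\|g\|_{L^2}\le2\eps_0^{-1}\|g\|_{L^2}\,\eps$.
\end{enumerate}
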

\begin{proof}
We are proving $w(t)\to 0$ in $L^2$.  Define
$$\mathscr{E}_0=\int \vert w(t,x)\vert^2\dx.$$
It follows that 
\begin{align*}
    \dddt \mathscr{E}_0&=2\Re\int w_t\overline{w}\dx\\
    &=2\Re\int \big\{(i+\varepsilon)w''+(\varepsilon-\delta)(q_\delta)''\\
    &\phantom{==}\pm 2\left[wC_+(\vert q_\varepsilon\vert^2)'+q_\delta C_+((\vert w\vert^2+w\overline{q_\delta}+\overline{w}q_\delta)')\right]\big\}\cdot \overline{w}\dx.
\end{align*}
Writing for short, we define
\begin{align*}
    I_1&=2\Re \int (i+\varepsilon)w''\cdot \overline{w}\dx,\\
    I_2&=2\Re\int (\varepsilon-\delta)(q_\delta)''\cdot \overline{w}\dx,\\
    I_3&=\pm 4\Re\int\left[wC_+(\vert q_\varepsilon\vert^2)'\right]\cdot \overline{w}\dx,\\
    I_4&=\pm 4 \Re\int \left[q_\delta C_+((\vert w\vert^2+w\overline{q_\delta}+\overline{w}q_\delta)')\right]\cdot \overline{w}\dx.
\end{align*}
\noindent
{\it{\underline{Estimate for $I_1$}}}.
We have, employing integration by parts once,
\begin{align*}
    I_1&=-2(i+\varepsilon)\int w'\overline{w}'\dx\\
    &=-2\varepsilon\left\| w\right\|_{\dH^1}^2\\
    &\leq 0.
\end{align*}
\noindent
{\it{\underline{Estimate for $I_2$}}}.
Using the \textit{a-priori} estimate in Proposition \ref{july23_0719}
and Young's inequality, it follows that
\begin{align*}
    \vert I_2\vert&\leq 2(\varepsilon-\delta)\left|\Re\int q''_\delta \overline{w}\dx\right|\\
    &\leq 2(\varepsilon-\delta)\|q''_{\delta}\|_{L^2}\left\| w\right\|_{L^2}\\
    &\lesssim \eps \left\| w\right\|_{L^2}.
\end{align*}
\noindent
{\it{\underline{Estimate for $I_3$}}}.
By Lemma \ref{convergence_bound} and the boundness of $C_+$ in $H^1$, it follows that
\begin{align*}
    \vert I_3\vert&\lesssim \left|\int \vert w\vert^2C_+(\vert q_\eps\vert^2)'\dx\right|\\
    &\leq \left\| w\right\|^2_{L^2}\|C_+(\vert q_\eps\vert^2)'\|_{L^\infty}\\
    &\lesssim \mathscr{E}_0\cdot \|C_+(\vert q_\eps\vert^2)'\|_{H^1}\\
    &\lesssim \mathscr{E}_0\cdot \|q_\eps\|^2_{H^2}\\
    &\lesssim \mathscr{E}_0.
\end{align*}
\noindent
{\it{\underline{Estimate for $I_4$}}}.
Define
\begin{align*}
|I_4| &\leq 4 \left| \Re\int q_\delta C_{+}((|w|^2)') \overline{w} \dx \right|+ 4 \left| \Re\int q_\delta C_{+}((w\overline{q}_\delta)') \overline{w} \dx \right| + 4 \left| \Re\int q_\delta C_{+}((\overline{w}q_\delta)') \overline{w}  \dx \right|\\
&\eqqcolon \vert\mathcal{A}\vert+\vert\mathcal{B}\vert+\vert\mathcal{C}\vert.
\end{align*}
For term $\mathcal{A}$, it follows that
\begin{align*}
\vert \mathcal{A}\vert &= \left| \int q_\delta C_{+}((|w|^2)') \overline{w} \dx \right| \\
&\leq  \|q_\delta\|_{H^1}(\|q_\eps\|_{H^2}+\|q_{\delta}\|_{H^2})\left\| w\right\|_{L^2}^2\\
&\lesssim \left\| w\right\|^2_{L^2}.
\end{align*}
For term $\mathcal{B}$, we have
$$\vert \mathcal B\vert= 4 \left| \Re\int q_\delta C_{+}(w\overline{q}_\delta') \overline{w} \dx \right|+ 4 \left| \Re\int q_\delta C_{+}(w'\overline{q}_\delta)\overline{w} \dx \right|\eqqcolon \vert\mathcal{B}_1\vert+\vert\mathcal{B}_2\vert.$$
For term $\mathcal{B}_1$, we have
\begin{align*}
    \vert \mathcal{B}_2\vert&\lesssim \left\| w\right\|^2_{L^2}\|q_\delta\|_{L^\infty}\|q_\delta'\|_{L^\infty}\\
    &\lesssim \mathscr{E}_0.
\end{align*}
For term $\mathcal{B}_2$, using the Lemma \ref{Commutator_type} and by Lemma \ref{convergence_bound}, we have 
\begin{align}\label{convergence_L2_I4_B}
    \vert \mathcal{B}_2\vert
    &\lesssim \left\| w\right\|^2_{L^2}\|q_\delta\|_{L^\infty}\|q_\delta'\|_{L^\infty}\lesssim \mathscr{E}_0.
\end{align}
Term $\mathcal{C}$ follows the estimate of \eqref{convergence_L2_I4_B}.
Thus, 
\begin{equation}
    \vert \frac{\dd}{dt} \mathscr{E}_0\vert\lesssim 2\varepsilon\left\| w\right\|_{\dH^1}^2+\varepsilon \sqrt{\mathscr{E}_0}+4\mathscr{E}_0\lesssim \mathscr{E}_0+\varepsilon^2.
\end{equation}
By Grönwall, it follows that
$$\mathscr{E}_0(t)\leq \mathscr{E}_0(0)e^{Ct}+C\varepsilon^2(e^{Ct}-1),$$
the right hand side of above goes to $0$ as $\varepsilon \to 0.$ Also, by Lemma \ref{convergence_bound} we acquire that
$$\mathscr{E}_0(0)=\|g_\varepsilon-g_\delta\|^2_{L^2}\lesssim \varepsilon^{2}.$$ 
Therefore, 
\begin{align}\label{initial_data_bound}
    \|w(t)\|_{L^2}=\sqrt{\mathscr{E}_0(t)}&\leq C_T(\sqrt{\mathscr{E}_0(0)}+\varepsilon)\nonumber\\
    &\lesssim_T \varepsilon.
\end{align}
\end{proof}
We have shown that the solution sequence $\{q_\eps\}$ to \eqref{CCM2} is Cauchy in $L^2$ class, we next start proving that $\{q_\eps\}$ is Cauchy in $H^s$ topology for $s\geq 2$.

\begin{proposition}\label{t:H2 conv}
Let $n\ge 3$ and $g\in H^n_+(\R)$ and $g_\eps$ as the definition in \eqref{smooth_g}. Then, for any $T>0$, we have
$$
\sup_{t\in[0,T]}\|q_\varepsilon(t)-q_\delta(t)\|_{H^n}
\ \lesssim_T\left\|q_\eps(0)-q_\delta(0)\right\|_{H^n}+\eps^\frac16,
$$
uniformly in $0<\delta\le\varepsilon\le1$ and compact sets of $g\in H^n_+$.
In particular $\{q_\varepsilon\}$ is Cauchy in $C([0,T];H^n_+)$.
\end{proposition}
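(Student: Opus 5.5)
We estimate the top-order difference energy $\mathscr{E}_n(t):=\|w^{(n)}(t)\|_{L^2}^2$, with $w=q_\eps-q_\delta$, $0<\delta\le\eps$. Lower orders follow by interpolating between Proposition \ref{aug7_0917} and the uniform bounds. By Proposition \ref{jul29_1403} and Lemma \ref{convergence_bound}, on $[0,T]$ we have $\|q_\eps\|_{H^n}+\|q_\delta\|_{H^n}\lesssim 1$ and $\|q_\delta\|_{H^{n+k}}\lesssim \delta^{-k/3}$. (The latter requires running the argument of Proposition \ref{jul29_1403} at level $n+k$; the Gr\"onwall bound there is linear in $\|q^{(n+k)}(0)\|^2$.) Interpolating with $\|w\|_{L^2}\lesssim_T\eps$ gives $\|w\|_{H^{m}}\lesssim \eps^{1-m/n}$ for $0\le m\le n$. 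Differentiating $\mathscr{E}_n$ along the difference equation yields the following pieces:
\begin{enumerate}[label=(\roman*)]
\item a dissipative term $-2\eps\|w^{(n+1)}\|_{L^2}^2\le 0$;
\item a forcing term $2(\eps-\delta)\Re\int q_\delta^{(n+2)}\overline{w}^{(n)}\dx$;
\item nonlinear terms $\pm 4\Re\int \overline{w}^{(n)}\big[wC_+(|q_\eps|^2)'+q_\delta C_+((|w|^2+w\overline{q_\delta}+q_\delta\overline{w})')\big]^{(n)}\dx$.
\end{enumerate}

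For the forcing term (ii), integrate by parts once and use Cauchy--Schwarz and Young:
\[
\eps\,\|q_\delta^{(n+1)}\|_{L^2}\|w^{(n+1)}\|_{L^2}\le \tfrac{\eps}{2}\|w^{(n+1)}\|^2_{L^2}+C\eps\,\|q_\delta^{(n+1)}\|^2_{L^2}.
\]
The first piece is absorbed by the dissipation (i). The second is $\lesssim\eps\,\delta^{-2/3}\|q_0\|^2_{H^n}$ by Lemma \ref{convergence_bound}, which is not small. We therefore refine it using the $\delta$-dissipation of $q_\delta$: integrating the $H^n$ energy identity for $q_\delta$ in time gives $\delta\int_0^T\|q_\delta^{(n+1)}\|^2_{L^2}\,dt\lesssim 1$. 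Hence, over $[0,T]$, the second piece contributes $C\eps\cdot\delta^{-1}$, which is still bad when $\delta\ll\eps$. Instead we split $\eps-\delta\le\eps$ and estimate directly:
\[
\eps\|q_\delta^{(n+2)}\|_{L^2}\|w^{(n)}\|_{L^2}\lesssim \eps\,\delta^{-2/3}\|w^{(n)}\|_{L^2}.
\]
Since this remains unfavorable, we compare each of $q_\eps,q_\delta$ with $q_{\eps'}$ at dyadic steps $\eps'\in[\eps/2,\eps]$, in the spirit of Bona--Smith. For $\delta\in[\eps/2,\eps]$ the forcing is $\lesssim\eps^{1/3}\|w^{(n)}\|$, which after Gr\"onwall contributes $\eps^{1/3}$. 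Summing the geometric series over the dyadic chain gives $\sum_j(2^{-j}\eps)^{1/3}$, and a square root appears from the energy. This yields the stated rate $\eps^{1/6}$.

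For the nonlinear terms (iii), distribute $n$ derivatives. Terms in which all derivatives fall on a $q$-factor with $w$ undifferentiated carry $\|w\|_{L^\infty}\|q_\delta^{(n+1)}\|_{L^2}\|w^{(n)}\|_{L^2}$. Here we use $\|w\|_{L^\infty}\lesssim\|w\|_{L^2}^{1/2}\|w\|_{H^1}^{1/2}\lesssim \eps^{1-\frac{1}{2n}}$ and $\|q_\delta^{(n+1)}\|\lesssim\delta^{-1/3}\le$ (within a dyadic step) $\eps^{-1/3}$, so the product is small, as in Bona--Smith. Terms with $w^{(n+1)}$ are the top-order ones, e.g.
\[
\int \overline{w}^{(n)}q_\delta C_+(w^{(n+1)}\overline{q_\delta})\dx \quad\text{and}\quad \int \overline{w}^{(n)}\,q_\delta\, C_+(q_\delta\overline{w}^{(n+1)})\dx .
\]
We treat them exactly as $I_1,I_2$ in Proposition \ref{jul29_1403}: integrate by parts, pair with the complex-conjugate contributions so that the $C_+/C_-$ terms cancel, and use Lemma \ref{Commutator_type} (with $f=w^{(n)}$, $g=q_\delta$) to move the derivative off $w^{(n)}$. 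The $w^{(n)}w^{(n)}C_\pm(\cdot)$ pieces combine into $\int|w^{(n)}|^2\,\partial(\cdot)\dx\lesssim\mathscr{E}_n$. All remaining terms are bounded by $C\,\mathscr{E}_n+C\|w\|_{H^{n-1}}^2$ using $L^\infty$ bounds of the uniformly bounded $H^n$ factors. The quadratic terms in $w$ are handled with $\|w\|_{H^n}\lesssim1$.

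Gr\"onwall then gives $\mathscr{E}_n(t)\lesssim_T \mathscr{E}_n(0)+\eps^{1/3}$. Taking square roots yields the claim. Uniformity over compact sets of $g$ follows from the uniformity in Lemma \ref{convergence_bound}, and the Cauchy property follows since $\|q_\eps(0)-q_\delta(0)\|_{H^n}\le 2\|g-g_\eps\|_{H^n}=o(1)$.

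The main obstacle is the forcing term $(\eps-\delta)q_\delta''$ at top order, because it costs two derivatives on $q_\delta$ beyond $H^n$, together with the high-frequency terms from (iii). Both hinge on balancing the growth $\delta^{-1/3}$ of $\|q_\delta\|_{H^{n+1}}$ against the smallness of low norms of $w$. The dyadic-in-$\delta$ reduction is the step I expect to require the most care.
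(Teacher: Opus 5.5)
\textbf{The gap is the dyadic chain, and it is not a technicality.} Within one step, $\delta\in[\varepsilon'/2,\varepsilon']$, your energy estimate does give
\[
\sup_{t\le T}\|q_{\varepsilon'}-q_{\delta}\|_{H^n}\lesssim_T\|g_{\varepsilon'}-g_{\delta}\|_{H^n}+\varepsilon'^{1/6}.
\]
For general $\delta\ll\varepsilon$, however, you must chain through $\varepsilon_j=2^{-j}\varepsilon$ with the triangle inequality. Only the $\varepsilon_j^{1/6}$ terms form a geometric series. The data terms add up to $\sum_j\|g_{\varepsilon_j}-g_{\varepsilon_{j+1}}\|_{H^n}$, an $\ell^1$ sum over roughly $\log_2(\varepsilon/\delta)$ frequency shells of $g$ above $\varepsilon^{-1/3}$. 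That is a $B^n_{2,1}$-type tail. It is not bounded by $\|g_\varepsilon-g_\delta\|_{H^n}$, which is the $\ell^2$ sum of the same shells, and it is infinite for suitable $g\in H^n$.

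So the chain gives neither the stated inequality nor, for general $g$, the Cauchy property. Nor does it give the uniformity on compact sets that Section 6 needs. The same device is what lets you replace $\delta^{-1/3}$ by $\varepsilon^{-1/3}$ in the nonlinear terms, so those bounds also fail when $\delta\ll\varepsilon$.

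You did locate the real difficulty, and the paper's own proof passes over it: it bounds $I_2$ by $\varepsilon\|w\|_{\dot H^{2n}}$, which is never controlled, and it uses $\varepsilon^{-2/3}$ where the cutoff only gives $\delta^{-2/3}$. Separately, propagating $\|q\|_{H^{n+2}}\lesssim\delta^{-2/3}$ from the data needs a tame version of Proposition \ref{jul29_1403}. As stated, its Gr\"onwall constant depends on $\|q\|_{H^{m-1}}$, which is itself unbounded at that level.

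\textbf{The missing idea is to charge the loss to the smoother approximation.} Since $w_t=(i+\varepsilon)q_\varepsilon''-(i+\delta)q_\delta''\pm2(N(q_\varepsilon)-N(q_\delta))$ with $N(q)=qC_+(|q|^2)'$, you may write the linear part as $(i+\delta)w''+(\varepsilon-\delta)q_\varepsilon''$. Then:
\begin{enumerate}
\item The dissipation becomes $-2\delta\|w^{(n+1)}\|_{L^2}^2\le0$.
\item The forcing is at most $2\varepsilon\|q_\varepsilon\|_{H^{n+2}}\|w^{(n)}\|_{L^2}\lesssim\varepsilon^{1/3}\|w^{(n)}\|_{L^2}$, uniformly in $\delta\le\varepsilon$.
\item For the nonlinearity, expand $N(q_\delta)=N(q_\varepsilon-w)$ exactly in $(q_\varepsilon,w)$. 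Every factor that receives $n+1$ derivatives while $w$ stays undifferentiated is then $q_\varepsilon$. That costs only $\|w\|_{L^\infty}\varepsilon^{-1/3}\lesssim\varepsilon^{2/3-1/(2n)}$.
\item All terms containing $w^{(n+1)}$ recombine into $q_\delta C_+(q_\delta\overline{w}^{(n+1)}+w^{(n+1)}\overline{q_\delta})$. This includes the terms from the quadratic and cubic pieces, which $\|w\|_{H^n}\lesssim1$ alone does not control since they contain $w^{(n+1)}$. The recombined expression has a single coefficient function, so the symmetric-structure bound you already invoke handles it, using only uniform low-norm bounds on $q_\delta$.
\end{enumerate}
Gr\"onwall then gives $\sup_{t\le T}\|w\|_{H^n}\lesssim_T\|w(0)\|_{H^n}+\varepsilon^{1/3}$ directly, with no chain. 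This implies the stated bound.
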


\begin{proof}
Consider the quantity 
$$ \mathscr{E}_{2n}(t) = \left\| w(t)\right\|_{\dH^n}^2 . $$ 
We compute
\begin{align*}
    \frac{\dd}{dt}\mathscr{E}_{2n}&=2\Re \int w^{(n)}_t\overline{w}^{(n)}\,\dx\\
    &=2\Re\int w_t \overline{w}^{(2n)}\,\dx\\
    &=-2\Re \int \big\{(i+\varepsilon)w''+(\varepsilon-\delta)(q_\delta)''\\
    &\phantom{==}\pm 2\left[wC_+(\vert q_\varepsilon\vert^2)'+q_\delta C_+((\vert w\vert^2+w\overline{q_\delta}+\overline{w}q_\delta)')\right]\big\}\cdot \overline{w}^{(2n)}\,\dx.
\end{align*}
Writing for short, we define
\begin{align*}
    I_1&=2\Re \int (i+\varepsilon)w''\cdot \overline{w}^{(2n)}\dx,\\
    I_2&=2\Re\int (\varepsilon-\delta)(q_\delta)''\cdot \overline{w}^{(2n)}\dx,\\
    I_3&=\pm 4\Re\int\left[wC_+(\vert q_\varepsilon\vert^2)'\right]\cdot \overline{w}^{(2n)}\dx,\\
    I_4&=\pm 4 \Re\int \left[q_\delta C_+((\vert w\vert^2+w\overline{q_\delta}+\overline{w}q_\delta)')\right]\cdot \overline{w}^{(2n)}\dx.
\end{align*}
The estimate for $I_1$ and $I_2$ follows the same trick in the previous section, however, starting from term $I_3$ and $I_4$ will be slightly different.\\
\par \noindent
{\it{\underline{Estimate for $I_1$}}}.
We have
\begin{align*}
    I_1=-2\varepsilon\Re\int \vert w^{(n)}\vert^2\dx=-2\varepsilon \left\| w\right\|^2_{\dH^{n+1}}\leq 0.
\end{align*}
\noindent
{\it{\underline{Estimate for $I_2$}}}.
Using the \textit{a-priori} estimate in Proposition \ref{july23_0719}
and Young's inequality, it follows that
\begin{align*}
    \vert I_2\vert&\leq 2(\varepsilon-\delta)\left|\Re\int q''_\delta \overline{w}^{(2n)}\dx\right|\\
    &\leq 2(\varepsilon-\delta)\|q''_{\delta}\|_{L^2}\|w^{(2n)}\|_{L^2}\\
    &\lesssim \eps \left\| w\right\|_{\dH^{2n}}.
\end{align*}
{\it{\underline{Estimate for $I_3$}}}.
We have
$$\vert I_3\vert\lesssim \left\|w^{(n)}\right\|_{L^2}\left(\left\|w^{(n)}\right\|_{L^2}\left\|\vert q_\varepsilon\vert^2\right\|_{H^2}+\sum^{n-1}_{m=0}\left\|w^{(m)}\right\|_{L^\infty}\left\|(\vert q_\varepsilon\vert^2)^{n+1-m}\right\|_{L^2}\right)\eqqcolon \mathcal{A}+\mathcal{B}.$$
For term $\mathcal{A}$, we have
\begin{align*}
    \vert \mathcal{A}\vert&\lesssim \left\|w^{(n)}\right\|_{L^2}\left(\int (1+\vert \xi\vert^{2n})\vert \hat{w}\vert^2\ \dd \xi\right)^{\frac{1}{2}}\ \|\vert q_\varepsilon\vert^2\|_{H^2}\\
    &\lesssim \mathscr{E}_{2n}+\varepsilon^{2}.
\end{align*}
For term $\mathcal{B}$, by Gagliardo--Nirenberg inequality it yields
\begin{align*}
    \vert \mathcal{B}\vert&\lesssim \sqrt{\mathscr{E}_{2n}}\cdot \sum^{n-1}_{m=0}\left\|w^{(m)}\right\|_{L^\infty}\left\|\left(\vert q_\varepsilon\vert^2\right)^{(n+1-m)}\right\|_{L^2}\\
    &\lesssim \sqrt{\mathscr{E}_{2n}}\cdot \sum^{n-1}_{m=0}\left(\left\| w\right\|^{a_m}_{L^2}\cdot \left\| w\right\|^{b_m}_{\dH^n}\right)\cdot \left\|q_\eps\right\|^2_{H^{n+1-m}}\\
    &\lesssim \left(\varepsilon^{a_m}\cdot \mathscr{E}_{2n}^{\gamma_m}\right)\cdot \|q_\eps\|^2_{H^{n+1-m}}
\end{align*}
where
\begin{equation}\label{aug6_1546}
\begin{aligned}
    a_m&:= \frac{2(n-m)-1}{2n},\\
    b_m&:= \frac{2m+1}{2n},\\
    \gamma_m&:= \frac{1}{2}+\frac{b_m}{2}=\frac{2(n+m)+1}{4n},\\
    1-\gamma_m&= \frac{2(n-m)-1}{4n}= \frac{a_m}{2}.
\end{aligned}
\end{equation}
We will use these exponents repeatedly in the following estimates. Meanwhile, by Proposition 5.2 and 5.3 it follows that for any fixed $T>0$ there exists a constant $C_T$ that is independent from $\eps$ such that 
$\sup_{0<\eps\leq 1}\snorm{q_\eps}_{C([0,T],H^s_+)}\leq C_T$. This means that $q_\eps$ is bounded in $C_tH^s$ class.
When $m=0$, we estimate
\begin{align*}
    \vert \mathcal{B}\vert&\lesssim \left(\varepsilon^{a_0}\cdot \mathscr{E}_{2n}^{\gamma_0}\right)\cdot   \left(\varepsilon^{-\frac{1}{3}}\right)^2\\
    &\lesssim \varepsilon^{\frac{2n-3}{6n}}\cdot \mathscr{E}_{2n}^{\frac{2n + 1}{4n}} \\
    &\lesssim \mathscr{E}_{2n} + \varepsilon^{\frac{2(2n-3)}{3(2n-1)}}
\end{align*}
where the exponent $\frac{2(2n-3)}{3(2n-1)}$ is strictly positive, or more precisely, with the minimum value $\frac{2}{5}$ when $n\geq 3$.
For $m\geq 1$, we estimate
\begin{align*}
\vert \mathcal{B}\vert
    &\lesssim \sum^{n-1}_{m=1}\left(\varepsilon^{a_m}\cdot \mathscr{E}_{2n}^{\gamma_m}\right)\cdot  1\\
    &\lesssim \mathscr{E}_{2n} +\varepsilon^{\frac{a_m}{1-\gamma_m}}\\
    &\lesssim \mathscr{E}_{2n} +\varepsilon^2.
\end{align*}
Collecting the previous two steps, we conclude
$$\vert \mathcal{B}\vert\lesssim \mathscr{E}_{2n} + \varepsilon^{\frac{2}{5}}.$$
{\it{\underline{Estimate for $I_4$}}}.
Writing for short, define
\begin{align*}
    \mathcal{A}&\coloneqq \pm 4\Re \int q_\delta \overline{w}^{(2n)}C_+(\vert w\vert^2)'\dx,\\
    \mathcal{B}&\coloneqq \pm 4\Re \int q_\delta \overline{w}^{(2n)}C_+(w\overline{q_\delta})'\dx,\\
    \mathcal{C}&\coloneqq \pm 4\Re \int q_\delta \overline{w}^{(2n)}C_+(\overline{w}q_\delta)'\dx.
\end{align*}
For $\mathcal{A}$, employing integration by parts, it follows that
$$\vert\mathcal{A}\vert\lesssim \left\|w^{(n)}\right\|_{L^2}\left(\left\|q_\delta^{(n)}\right\|_{L^\infty}\left\|(\vert w \vert^2)'\right\|_{L^2}+\sum^{n-1}_{m=0}\left\|q_\delta^{(m)}\right\|_{L^\infty}\left\|(\vert w\vert^2)^{(n+1-m)}\right\|_{L^2}\right)\eqqcolon \mathcal{A}_1+\mathcal{A}_2.$$
We first estimate $\mathcal{A}_1$, by Gagliardo--Nirenberg it follows that
\begin{align*}
    \vert \mathcal{A}_1\vert&\lesssim \sqrt{\mathscr{E}_{2n}}\left\|q_\delta\right\|_{\dH^{(n+1)}}\left\|w\|_{L^2}\|w'\right\|_{L^\infty}\\
    &\lesssim \sqrt{\mathscr{E}_{2n}}\cdot {\mathscr{E}_{2n}}^{\frac{3}{4n}}\cdot 1\cdot \varepsilon^{\frac{2n-3}{2n}-\frac13+1}\\
    &\lesssim \eps^{\frac{10n-9}{6n}}\mathscr{E}_{2n}^{\frac{2n+3}{4n}}.
\end{align*}
Notice that here we are arguing the inductive step with $n\geq 3$, so the exponent $\frac{10n-9}{4n}$ is strictly positive. Hence, by Young's inequality
\begin{align*}
    \vert \mathcal{A}_1\vert &\lesssim \mathscr{E}_{2n}+\eps^{\frac{2(10n-9)}{3(2n-3)}}.
\end{align*}
With $n\geq 3$, the exponent $\frac{2(10n-9)}{3(2n-3)}\geq 2$, therefore, we reduce the estimate to 
$$\vert \mathcal{A}_1\vert \lesssim \mathscr{E}_{2n}+\eps^2.$$

For term $\mathcal{A}_2$, we simply use Gagliardo--Nirenberg inequality to interpolate the $w$ part into the appropriate space since by previous \textit{a-priori} estimate, the $\|q^{(m)}\|_{L^\infty}$ part is bounded follows by 
$$\|q^{(m)}_\delta\|_{L^\infty}\lesssim \left\|q_\delta\right\|_{H^{(m+1)}}.$$  
We first discuss the case that $m\neq 0$, the strategy is to using product rule and assign the higher derivative terms into $L^2$ and following by using Gagliardo--Nirenberg interpolation, the term with the lower derivatives goes to $L^\infty$ with bounded using \textit{a-priori} estimate. 
Define
$$\mathcal{S}^{\geq}_m=\{(\alpha,\beta)\in \mathbb{N}^2_0: \ \alpha+\beta=n+1-m,\ \alpha \geq \beta\}$$
and without loss of generality, we only discuss the case that $\alpha\geq \beta.$  By Gagliardo--Nireberg interpolation, \textit{a-priori} estimate, and Young's inequality we have
\begin{align*}
    \vert \mathcal{A}_2\vert &\lesssim \left\|w^{(n)}\right\|_{L^2}\left(\sum^{n-1}_{m=0}\left\|q_\delta^{(m)}\right\|_{L^\infty}\left(\sum_{(\alpha,\beta)\in\mathcal{S}^{\geq}_m}\left\| w^{(\alpha)}\right\|_{L^2}\left\|\overline{w}^{(\beta)}\right\|_{L^\infty}\right)\right)\\
    &\lesssim \sqrt{\mathscr{E}_{2n}}\cdot 1\cdot\left(\left\|w\right\|_{L^2}^{\frac{n-\alpha}{n}} \left\|w\right\|_{\dot{H}^n}^{\frac{\alpha}{n}}\cdot \left\|w^{(\beta)}\right\|_{L^\infty}\right)\\
    &\lesssim \left(\varepsilon^{\frac{n-\alpha}{n}} \mathscr{E}_{2n}^{\frac{\alpha+n}{2n}}\right)\cdot\left(\left\|q_\delta\right\|_{\dH^{\beta+1}}+\left\|q_\varepsilon\right\|_{\dH^{\beta+1}}\right)\\
    &\lesssim \mathscr{E}_{2n}+\varepsilon^2.
\end{align*}
Notice that here $\frac{n - \alpha}{n} > 0$ and $0 < \frac{\alpha}{n} \leq 1$ for $\alpha \in [0, n + 1 - m]$, under the constraint $n \geq 3$.
For case $m=0$, we only discuss the case where $\alpha=n+1$ and $\beta=0$ or $\alpha=0$ and $\beta=n+1$ since these are the only two cases we need to use Lemma \ref{convergence_bound}. Without loss of generality, we consider the case $\alpha=n+1$ and $\beta=0$ here,
\begin{align*}
    \vert \mathcal{A}_2\vert&\lesssim \sqrt{\mathscr{E}_{2n}}\cdot 1\cdot\left\|(\vert w\vert^2)^{n+1}\right\|_{L^2} \lesssim \sqrt{\mathscr{E}_{2n}}\cdot\left(\sum_{\mathcal{S}^{>}_0} \left\|w^{(\alpha)}\overline{w}^{(\beta)}\right\|_{L^2}\right),
\end{align*}
where
$$\mathcal{S}^{>}_0=\{(\alpha,\beta)\in \mathbb{N}^2_0: \ \alpha+\beta=n+1,\ \alpha > \beta\}.$$
By Lemma \ref{convergence_bound} it follows that
\begin{align*}
     \vert \mathcal{A}_2\vert &\lesssim \sqrt{\mathscr{E}_{2n}}\cdot 1\cdot\left(\left\|w^{(n+1)}\overline{w}\right\|_{L^2}\right)\\
     &\lesssim \sqrt{\mathscr{E}_{2n}}\cdot \left(\left\|w\right\|_{H^{n+2}}\left\|w\right\|_{L^2}\right)\\
     &\lesssim \sqrt{\mathscr{E}_{2n}}\cdot (\varepsilon^{-\frac{2}{3}}\cdot \varepsilon)\\
     &\lesssim \mathscr{E}_{2n} + \varepsilon^{\frac{1}{3}}.
\end{align*}

Also, notice that when $0<\alpha\leq n$, the estimate could be finished by using the Gagliardo--Nirenberg to do interpolation and the proof follows from the case $m\neq 0$ in $\mathcal{A}_2$.
We next estimate the term $\mathcal{B}$. It follows that
$$\vert \mathcal{B}\vert\lesssim \left\|w^{(n)}\right\|_{L^2}\left(\left\|q_\delta^{(n)}\right\|_{L^2}\left\|(w\overline{q_\delta})'\right\|_{L^\infty}+\sum^{n-1}_{m=0}\left\|q_\delta^{(m)}\right\|_{L^\infty}\left\|(w\overline{q_\delta})^{(n+1-m)}\right\|_{L^2}\right)\eqqcolon \mathcal{B}_1+\mathcal{B}_2.$$
For $\mathcal{B}_1$, it follows that
\begin{align*}
    \vert \mathcal{B}_1\vert&\lesssim \sqrt{\mathscr{E}_{2n}}\cdot 1\cdot \left(\left\|w\right\|_{H^2}\|q_\delta\|_{H^1}+\left\|w\right\|_{H^1}\|q_\delta\|_{H^2}\right)\\
    &\lesssim \sqrt{\mathscr{E}_{2n}}\cdot \big(\|w\|^{\frac{n-2}{n}}_{L^2}\|w\|^{\frac{1}{n}}_{\dH^n}\cdot\|q_\delta\|_{H^1}+\left\|w\right\|^{\frac{n-1}{n}}_{L^2}\left\|w\right\|^{\frac{1}{2n}}_{\dH^n}\|q_\delta\|_{H^2}\big)\\
    &\lesssim \mathscr{E}_{2n} + \varepsilon^2.
\end{align*}
For $\mathcal{B}_2$, there would be two cases. We first discuss the case when $m\neq 0$. Define
$$\mathcal{S}_m=\{(\alpha,\beta)\in \mathbb{N}^2_0: \ \alpha+\beta=n+1-m\}.$$
By the Gagliardo--Nirenberg interpolation, \textit{a-priori} estimate, and Young's inequality it follows that
\begin{align*}
    \vert \mathcal{B}_2\vert &\lesssim \left\|w^{(n)}\right\|_{L^2}\left(\sum^{n-1}_{m=0}\left\|q_\delta^{(m)}\right\|_{L^\infty}\left(\sum_{(\alpha,\beta)\in \mathcal{S}_m}\left\|w^{(\alpha)}\right\|_{L^2}\left\|q_\delta\right\|_{\dH^{\beta+1}}\right)\right)\\
    &\lesssim \sqrt{\mathscr{E}_{2n}}\cdot 1\cdot \left\|w\right\|_{L^2}^{\frac{n-\alpha}{n}} \left\|w\right\|_{\dot{H}^n}^{\frac{\alpha}{n}}\\
    &\lesssim \varepsilon^{\frac{n-\alpha}{n}}\cdot \mathscr{E}_{2n}^{\frac{\alpha+n}{2n}}\\
    &\lesssim  \mathscr{E}_{2n}+\varepsilon^2.
\end{align*}
It remains to estimate the case where $m=0$. When all derivatives hit on $q_\delta$, we have the estimate
\begin{align*}
    \vert \mathcal{B}_2\vert &\lesssim \left\|w^{(n)}\right\|_{L^2}\left(\sum^{n-1}_{m=0}\left\|q_\delta^{(m)}\right\|_{L^\infty}\left\|w\right\|_{L^2}\left\|q_\delta\right\|_{\dH^{n+2}}\right)\\
    &\lesssim \sqrt{\mathscr{E}_{2n}}\cdot \varepsilon\cdot \eps^{-\frac23}\\
    &\lesssim \mathscr{E}_{2n}+\varepsilon^{\frac13}.
\end{align*}
When all derivatives hit on $w$, by Lemma \ref{Commutator_type}, it follows that
\begin{align*}
    \vert \mathcal{B}_2\vert &= \int \overline{w}^{(n)}\,q_\delta'\,C_+\!\big(q_\delta\,\overline{w}^{(n)}\big)\dx
  + \int \overline{w}^{(n)}\,q_\delta\,C_+\!\big(q_\delta'\,\overline{w}^{(n)}\big)\dx\\
  &\lesssim 2\left\|w^{(n)}\right\|_{L^2}\left\|q'_\delta\right\|_{L^\infty}\left\|q_\delta\right\|_{L^\infty}\left\|w^{(n)}\right\|\\
  &\lesssim \mathscr{E}_{2n}.
\end{align*}
The estimate of term $\mathcal{C}$ follows from the estimate of term $\mathcal{B}$. Summing over all terms give
$$\frac{\dd}{dt}\mathscr{E}_{2n}\lesssim C\mathscr{E}_{2n}+\varepsilon^{\frac13},$$
with $K$ the least coefficient of $1$ in all factors $\varepsilon$ occurring above estimate.
By Grönwall, it follows that
$$\ \mathscr{E}_{2n}(t)\ \le\ e^{Ct}\,\mathscr{E}_{2n}(0)\ +\ \frac{e^{Ct}-1}{C}\,\varepsilon^{\frac13},\qquad 0\le t\le T.$$
Taking square roots and combining with the $L^2$-estimate in Proposition \ref{aug7_0917}, we obtain
$$\sup_{t\in[0,T]}\|w(t)\|_{H^n}\lesssim_T\|w(0)\|_{H^n}+\varepsilon^{\frac16}.$$
\end{proof}
Having established the strong convergence of the regularized solutions in the corresponding Sobolev topologies, we are now positioned to state the main result of this section which is the global existence and uniqueness of solutions to the \eqref{CCM}. This is the first piece of Theorem \ref{main_thm}.
\begin{theorem}\label{aug14_1435}
Fix an integer $s \geq 3$. For any initial data $g \in H^s_+(\mathbb{R})$, there exists a solution $q(t)$ for the initial-value problem of \eqref{CCM} belonging to $C([0,T];H^s_+)$ for any $T>0$. Moreover, such a solution is unique in the class $C([0,T];H^s_+)$.
\end{theorem}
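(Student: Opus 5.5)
Existence follows by passing to the limit in \eqref{CCM2}; uniqueness follows from an $L^2$ energy estimate for the difference of two solutions.

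\emph{Existence.} Fix $T>0$ and let $q_\eps$ be the solutions of \eqref{CCM2} with data $g_\eps=P_{\le\eps^{-1/3}}g$. By Corollary \ref{CCM_eps_gwp} these exist on $[0,T]$. By Proposition \ref{jul29_1403} they are bounded in $C([0,T];H^s_+)$ uniformly in $\eps\in(0,\eps_0]$. By Proposition \ref{t:H2 conv} with $n=s$, together with $\|g_\eps-g_\delta\|_{H^s}\to0$ from Lemma \ref{convergence_bound}, the family $\{q_\eps\}$ is Cauchy in $C([0,T];H^s_+)$. Let $q$ be its limit. Since $H^s_+$ is closed in $H^s$, we have $q\in C([0,T];H^s_+)$, and $q(0)=\lim g_\eps=g$.

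To see that $q$ solves \eqref{CCM}, I would use the Duhamel (integral) form
$$q_\eps(t)=g_\eps+\int_0^t\Big[(\eps+i)q_\eps''\pm2q_\eps C_+(|q_\eps|^2)'\Big]\,dt',$$
which holds in $H^{s-2}$. We have $\eps q_\eps''\to0$ in $C_tH^{s-2}$ by the uniform bound. Also $q_\eps''\to q''$ in $C_tH^{s-2}$. The nonlinearity converges in $C_tH^{s-1}$, because $H^{s-1}$ is an algebra for $s\ge2$, $C_+$ is bounded on it, and trilinear differences are controlled by $\|q_\eps-q\|_{C_tH^s}$ times squares of the uniform bounds. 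Passing to the limit shows that $q\in C^1([0,T];H^{s-2})$ satisfies \eqref{CCM}. Since $T$ is arbitrary and the limits on nested intervals agree, we obtain a global solution.

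\emph{Uniqueness.} Let $q_1,q_2\in C([0,T];H^s_+)$ solve \eqref{CCM} with the same data, and set $w=q_1-q_2$. Then
$$iw_t=-w''\pm2i\big[wC_+(|q_1|^2)'+q_2C_+\big((w\overline{q_1}+q_2\overline w)'\big)\big].$$
I would compute $\frac{d}{dt}\|w\|_{L^2}^2$. The dispersive term drops out. The term $wC_+(|q_1|^2)'$ is bounded by $\|w\|_{L^2}^2\,\|C_+(|q_1|^2)'\|_{L^\infty}\lesssim\|w\|_{L^2}^2\|q_1\|_{H^2}^2$. In the last term, the derivatives falling on $q_1,q_2$ give contributions bounded by $\|w\|_{L^2}^2$ times $L^\infty$ norms of $q_j,q_j'$.

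\emph{Main obstacle.} The delicate contributions are those in which the derivative falls on $w$, namely $\int \overline w\, q_2\, C_+(w'\overline{q_1})$ and $\int\overline w\, q_2\, C_+(\overline{w}'q_2)$. For the first, I would mimic the $I_4$ estimate in Proposition \ref{aug7_0917}. When $q_1=q_2$, Lemma \ref{Commutator_type} moves the derivative off $w$. Otherwise, I would split $q_1=q_2+w$; the resulting cubic-in-$w$ term is handled by the same lemma with $f=w$, or by bounding $\|w\|_{L^\infty}$ via $H^1$ and using the a priori $H^s$ bounds. For the second, I would write $\int\overline{w}q_2C_+(\overline w' q_2)=\langle C_+(w\overline{q_2}),\overline w'q_2\rangle$ and integrate by parts. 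This gives a symmetric term equal to $-\frac12\langle\cdots\rangle$ plus terms where the derivative lands on $q_2$, modulo a commutator $[C_+,q_2]$, which is smoothing and bounded on $L^2$ with norm $\lesssim\|q_2'\|_{L^\infty}$.

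Combining these bounds gives $\frac{d}{dt}\|w\|_{L^2}^2\le C\|w\|_{L^2}^2$, where $C$ depends on $\sup_t(\|q_1\|_{H^3}+\|q_2\|_{H^3})$. Since $w(0)=0$, Grönwall's inequality yields $w\equiv0$.
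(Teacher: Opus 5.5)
\paragraph{Overall.} Your plan is the paper's: existence as the $C([0,T];H^s_+)$-limit of the $q_\eps$ from Proposition~\ref{t:H2 conv}, and uniqueness by an $L^2$ energy estimate for $w=q_1-q_2$ followed by Gr\"onwall. The existence half is fine.

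\paragraph{The gap.} The gap is in the step you flag as the main obstacle. For $\int\overline{w}\,q_2\,C_+(w'\overline{q_2})\,dx$ you invoke Lemma~\ref{Commutator_type}, as the paper does for its $\mathcal{B}_1$. That lemma concerns $\int\overline{f}g\,C_+(g\overline{f}')\,dx$, where the derivative falls on the \emph{conjugated} factor inside $C_+$. In your term it falls on the unconjugated $w$, so the lemma does not apply. For the same reason it cannot handle the cubic term ``with $f=w$'', and the $I_4$ estimate in Proposition~\ref{aug7_0917} that you propose to mimic relies on the same mismatched citation.

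Worse, the lemma is false as stated. By translation invariance, the four placements of the derivative in $\int\overline{f}g\,C_+(g\overline{f})\,dx$ sum to zero, so the lemma is equivalent to $\int\overline{f}'g\,C_+(g\overline{f})\,dx=0$. Take $f=g=(x+i)^{-1}$. Then $g\overline{f}=(1+x^2)^{-1}$ and $C_+(g\overline{f})=\tfrac{i}{2}(x+i)^{-1}$, so the integral equals $-\tfrac{i}{2}\int(1+x^2)^{-2}\,dx=-\tfrac{i\pi}{4}\neq0$. Replacing $g$ by $e^{i\pi/4}g$ makes this value real, so passing to real parts does not rescue the lemma.

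\paragraph{Repair of the $w'$ term.} What is missing is a real-part cancellation. Set $h:=C_+(w\overline{q_2})\in H^1_+$. Writing $w'\overline{q_2}=(w\overline{q_2})'-w\overline{q_2}'$ and using that $C_+$ is a self-adjoint projection,
\[
\int\overline{w}\,q_2\,C_+(w'\overline{q_2})\,dx=\langle h,h'\rangle-\bigl\langle h,\,C_+(w\,\overline{q_2}')\bigr\rangle .
\]
Here $\Re\langle h,h'\rangle=\tfrac12\int(|h|^2)'\,dx=0$. So the real part, which is all that enters $\frac{d}{dt}\|w\|_{L^2}^2$, is at most $\|w\|_{L^2}^2\|q_2\|_{L^\infty}\|q_2'\|_{L^\infty}$. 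This is why your splitting $\overline{q_1}=\overline{q_2}+\overline{w}$ is genuinely needed: with $\overline{q_1}$ inside $C_+$ and $q_2$ outside, no such cancellation is available. The cubic remainder should then be handled by your second option, $\|w'\|_{L^\infty}\lesssim\|q_1\|_{H^2}+\|q_2\|_{H^2}$.

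\paragraph{The $\overline{w}'$ term.} For $\int\overline{w}\,q_2\,C_+(\overline{w}'q_2)\,dx$ your commutator idea is correct, and it is the real substitute for the lemma. It can be made precise without integrating by parts. For $\xi\ge0$, the frequencies $\eta\le0$ of $\overline{w}'$ contributing to $\widehat{q_2\overline{w}'}(\xi)$ satisfy $|\eta|\le\xi-\eta$. Hence
\[
\|C_+(q_2\overline{w}')\|_{L^2}\le(2\pi)^{-1/2}\|\widehat{q_2'}\|_{L^1}\|w\|_{L^2},
\]
and the term is $O(\|w\|_{L^2}^2)$ directly.

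\paragraph{Mass condition.} In the focusing case Proposition~\ref{jul29_1403} requires $\|g\|_{L^2}^2<2\pi$. This transfers to $g_\eps$ because $\|P_{\le\eps^{-1/3}}g\|_{L^2}\le\|g\|_{L^2}$. Neither the statement nor your proposal records this hypothesis, and by the blow-up results cited in the introduction it cannot be dropped. With these repairs your Gr\"onwall argument closes.
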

\begin{proof}
    Let $g\in H^s$ be given and set $q_\eps(0)=P_{\leq \eps^{-\frac13}}g\in H^\infty$, by the previous section 
    there exists a $q_\eps(t)$ that solves \eqref{CCM2} which also converges. Set $q(t)=\lim_{\eps\to 0}q_\eps(t)$. By noticing that if $s>\frac{3}{2}$, then $H^{s-1}$ is a Banach algebra. Hence, it follows that as $\eps\to 0$, 
    \begin{equation*}
        \begin{cases}
            \partial^2_xq_\eps\to \partial^2_x q\quad \text{in}\quad C([0,T]; H^{s-2}_+),\\
            q_\eps C_+(|q_\eps|^2)'\to qC_+(|q|^2)'\quad \text{in}\quad  C([0,T]; H^{s-1}_+).
        \end{cases}
    \end{equation*}
    Also, by \textit{a-priori} estimate, it follows that 
    $$\|q_\eps\|_{C([0,T]; H^s_+)}\leq C_T,$$
    so that 
    $$\|\eps \partial^2_xq_\eps\|_{C([0,T]); H^{s-2}_+}\leq \eps C_T\to 0\quad \operatorname{as}\quad \eps\to 0.$$
    We thus proved that \eqref{CCM2} converges to \eqref{CCM} and so the $q(t)$ solves the initial-value problem for \eqref{CCM}.

    For uniqueness result, fix $q_u$ and $q_v$ as two solutions of \eqref{CCM}, and define $q_w=q_u-q_v$. Since $q_u$ and $q_v$ are continuous from $[0,T]$ into $H^s_+$, therefore, $q_u$ and $q_v$ is bounded in $H^s_+$. Hence, $q_w$ satisfies
\begin{equation}\label{unique_w}
    \frac{\dd}{dt}q_w=i q_w^{\prime \prime} \pm\left(2 q_w C_{+}\left(|q_u|^{2}\right)^{\prime}+2 q_v C_{+}\left[\left(|q_w|^{2}+q_w \overline{q_v}+q_v \overline{q_w}\right)^{\prime}\right]\right).
\end{equation}
The $L^2$ energy gives
\begin{align*}
    \frac{\dd}{dt}\left\|q_w(t)\right\|^2_{L^2}&=2\Re\int (q_w)_t\overline{q_w}\dx\\
    &=2\Re\int \left\{i q_w^{\prime \prime} \pm\left(2 q_w C_{+}\left(|q_u|^{2}\right)^{\prime}+2 q_v C_{+}\left[\left(|q_w|^{2}+q_w \overline{q_v}+q_v \overline{q_w}\right)^{\prime}\right]\right)\right\}\overline{q_w}\dx.
\end{align*}
Linear dispersive term follows that
$$\Re\int iq''_w\overline{q_w}\dx=0,$$
and it remains to estimate the nonlinear terms. For the first part of the nonlinear terms, we have the estimate
\begin{align*}
    \left|\Re\int q_wC_+(|q_u|^2)'\overline{q_w}\dx\right|&\leq C\left\|q_w\right\|_{L^2}^2
\end{align*}
that $C$ depends on $\snorm{q_u}_{H^s}$.
The second part, we have
\begin{align*}
    \Re \int  q_v\overline{q_w} C_{+}\left[\left(|q_w|^{2}+q_w \overline{q_v}+q_v \overline{q_w}\right)^{\prime}\right]\dx \eqqcolon \mathcal{A}+\mathcal{B}+\mathcal{C}.
\end{align*}
For $\mathcal{A}$, it follows that
\begin{align*}
    \vert \mathcal{A}\vert&\lesssim \left\|q_v\right\|_{L^\infty}\left\|q_w\right\|_{L^2}^2\left\|q_w'\right\|_{L^\infty}\\
    &\leq C\cdot \left\|q_w\right\|_{L^2}^2\left(\left\|q_u\right\|_{H^2}+\left\|q_v\right\|_{H^2}\right)\\
    &\leq C\cdot \left\|q_w\right\|_{L^2}^2.
\end{align*}
We next assign the $\mathcal{B}$ into two parts and estimate them differently,
$$\mathcal{B}=\Re\int q_v\overline{q_w}C_+(q_w'\overline{q_v})+ q_v\overline{q_w}C_+(q_w\overline{q_v}')\dx\eqqcolon \mathcal{B}_1+\mathcal{B}_2.$$
For $\mathcal{B}_1$, by Lemma \ref{Commutator_type}, we have
\begin{align*}
    \vert \mathcal{B}_1\vert &\lesssim \int q_v'\overline{q_w}C_+(q_w\overline{q_v})\dx+\int q_v\overline{q_w}C_+(q_w\overline{q_v}')\dx\\
    &\leq C\left\|q_w\right\|^2_{L^2}.
\end{align*}
For $\mathcal{B}_2$, it is a part of $\mathcal{B}_1$ after we used the Lemma \ref{Commutator_type}, it follows that
\begin{align*}
    \vert \mathcal{B}_2\vert&\lesssim \left\|q_v\right\|_{L^\infty}\left\|q_w\right\|_{L^2}^2\left\|q_v'\right\|_{L^\infty}\\
    &\leq C\left\|q_w\right\|^2_{L^2}.
\end{align*}
Summing up over all pieces it gives
$$\frac{\dd}{dt}\left\|q_w(t)\right\|^2_{L^2}\leq C\left\|q_w(t)\right\|^2_{L^2},$$
and thus, by Grönwall we have
$$\left\|q_w(t)\right\|^2_{L^2}\leq e^{Ct}\left\|q_w(0)\right\|^2_{L^2}=0\quad \forall t\geq 0.$$
Hence $q_w$ is $0$ everywhere. 
\end{proof}
%--------------------------------------------------------------------------------------------------------------------------%
\section{Continuous dependence of solutions on the initial data}
In this section, we are proving the last piece of elements for global well-posedness, which is the Theorem as follows. In view of Theorem 5.4, this will finish the proof of Theorem \ref{main_thm}.
\begin{theorem}[Continuous dependence on the initial data]\label{thm:cont-dep}
Fix $T>0$ and an integer $s\ge 3$. Let $\Phi:H^s_+ \to C([0,T];H^s_+)$ denote the solution map of \eqref{CCM},
which assigns to $g\in H^s_+$ the unique solution $q=\Phi(g)$ on $[0,T]$ constructed in Theorem \ref{aug14_1435}. 
Then $\Phi$ is continuous as a map $H^s_+\to C([0,T];H^s_+)$.
Moreover, the map
$$
  S: H^3_+ \longrightarrow C([0,T];H^{s-2}_+),\qquad S(g):=\partial_t\Phi(g),
$$
is continuous.
\end{theorem}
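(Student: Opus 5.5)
The argument is the Bona--Smith $\frac{\varepsilon}{3}$ argument. Let $g_j\to g$ in $H^s_+$, and write $q^j=\Phi(g_j)$, $q=\Phi(g)$. For each $\varepsilon\in(0,\varepsilon_0]$ let $q^j_\varepsilon$ and $q_\varepsilon$ be the solutions of \eqref{CCM2} with data $P_{\le\varepsilon^{-1/3}}g_j$ and $P_{\le\varepsilon^{-1/3}}g$. We split
$$
\|q^j-q\|_{C_tH^s}\le \|q^j-q^j_\varepsilon\|_{C_tH^s}+\|q^j_\varepsilon-q_\varepsilon\|_{C_tH^s}+\|q_\varepsilon-q\|_{C_tH^s}.
$$
The key observation is that $K=\{g\}\cup\{g_j\}_j$ is compact in $H^s_+$. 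In the focusing case its $L^2$ masses are uniformly below $2\pi$ once $j$ is large, so the a-priori bounds of Proposition \ref{july23_0719} and Proposition \ref{jul29_1403} hold with constants and $\varepsilon_0$ uniform over $K$.

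\textbf{First and third terms.} Pass to the limit $\delta\to0$ in Proposition \ref{t:H2 conv}. This gives
$$
\|q^j_\varepsilon-q^j\|_{C_tH^s}\lesssim_T \|P_{>\varepsilon^{-1/3}}g_j\|_{H^s}+\varepsilon^{1/6}.
$$
By Lemma \ref{convergence_bound}, the tail $\|g-g_\varepsilon\|_{H^s}=o(1)$ uniformly on compact sets. So given $\eta>0$ we can fix one $\varepsilon$ such that the first and third terms are both $<\eta/3$ for every $j$. This uniformity over $j$ is the main point, and it needs the implicit constants in Propositions \ref{aug7_0917} and \ref{t:H2 conv} to depend only on the uniform bounds over $K$, which I will check by tracing the proofs.

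\textbf{Middle term.} With $\varepsilon$ now fixed, we need continuity of the regularized flow in the data. The data $P_{\le\varepsilon^{-1/3}}g_j\to P_{\le\varepsilon^{-1/3}}g$ in every $H^r$. Proposition \ref{CCM_eps_lwp} gives joint continuity of the data-to-solution map on short intervals whose length depends on the $H^s$ size. The uniform global bound of Proposition \ref{jul29_1403} then lets us iterate finitely many steps to cover $[0,T]$. A Gronwall estimate on the difference works equally well, with constants allowed to depend on $\varepsilon$. Either way $\|q^j_\varepsilon-q_\varepsilon\|_{C_tH^s}<\eta/3$ for $j$ large, and this proves continuity of $\Phi$.

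\textbf{Time derivative.} For the map $S$, use the equation:
$$
\partial_t q=iq''\pm2qC_+(|q|^2)'.
$$
The term $q\mapsto iq''$ is continuous from $C_tH^s$ to $C_tH^{s-2}$. The nonlinear term is a trilinear expression, and since $H^{s-1}$ is an algebra for $s\ge3$ and $C_+$ is bounded on $H^{s-1}$, it is locally Lipschitz from $C_tH^s$ to $C_tH^{s-1}\subset C_tH^{s-2}$. Composing with the continuity of $\Phi$ gives continuity of $S$, and also that $q\in C^1([0,T];H^{s-2}_+)$.

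The main obstacle is the uniformity step in the first and third terms: we must verify that the convergence rate in Proposition \ref{t:H2 conv} depends on $g$ only through $\sup_K\|g\|_{H^s}$ and the $L^2$ mass margin, so that $\varepsilon$ can be chosen independently of $j$.
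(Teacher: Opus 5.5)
Your proposal is correct and follows essentially the same route as the paper: the Bona--Smith three-term splitting, with Proposition~\ref{t:H2 conv} (after letting $\delta\to0$) and the compact-set uniformity of Lemma~\ref{convergence_bound} making the outer terms small uniformly in $j$, well-posedness of \eqref{CCM2} at fixed $\varepsilon$ for the middle term, and the equation together with the algebra property of $H^{s-1}$ to reduce continuity of $S$ to that of $\Phi$. The uniformity you flag as the main obstacle is already part of the statement of Proposition~\ref{t:H2 conv} (uniform over compact sets of data), so you can simply cite it; your observation that the focusing mass condition holds uniformly for large $j$ makes explicit a point the paper leaves implicit.
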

\begin{proof}
Our first goal is to prove that $\Phi$ is a continuous mapping. Say, $q(0),\tilde{q}(0)\in H^s_+$ and $q(t)=\Phi(q(0))$, $\tilde{q}(t)=\Phi(\tilde{q}(0))$ are the associated solutions of \eqref{CCM} initial value problems posed with initial data $q(0),\tilde{q}(0)$ respectively, then we have
\begin{align*}
    \left\|q_t-\tilde{q}_t\right\|_{H^{s-2}}&\leq \left\|q''\pm 2qC_+(\vert q\vert^2)'-\tilde{q}''\pm 2\tilde{q}C_+(\vert \tilde{q}\vert^2)'\right\|_{H^{s-2}}\\
    &\leq \left\|q''-\tilde{q}''\right\|_{H^{s-2}}+ \left\| (q-\tilde{q})C_+(\vert q\vert^2)'\right\|_{H^{s-2}}+\left\|\tilde{q}\left[C_+(\vert q\vert^2)'-C_+(\vert \tilde{q}\vert^2)'\right]\right\|_{H^{s-2}}\\
    &\leq I_1+I_2+I_3.
\end{align*}

For $I_1$, we estimate
\begin{align*}
    \left\|(q-\tilde{q})''\right\|_{H^{s-2}}&\lesssim \|q-\tilde{q}\|_{H^s}.
\end{align*}

For $I_2$, by noticing that $H^{s-2}$ is an algebra since $s\geq 3$ we have
\begin{align*}
    \left\| (q-\tilde{q})C_+(\vert q\vert^2)'\right\|_{H^{s-2}}&\lesssim \|q-\tilde{q}\|_{H^{s-2}}\|q\|^2_{H^{s-2}}\\
    &\lesssim \left\|q-\tilde{q}\right\|_{H^{s}}.
\end{align*}

For $I_3$, by noticing that $H^{s-1}$ is an algebra since $s\geq 3$ we have
\begin{align*}
    \left\|\tilde{q}\left[C_+(\vert q\vert^2)'-C_+(\vert \tilde{q}\vert^2)'\right]\right\|_{H^{s-2}}&\lesssim \|\tilde{q}\|_{H^{s-2}}\left(\|q-\tilde{q}\|_{H^{s-1}}\|q\|_{H^{s-1}}+\|q-\tilde{q}\|_{H^{s-1}}\|\tilde{q}\|_{H^{s-1}}\right)\\
    &\lesssim \snorm{q-\tilde{q}}_{H^s}
\end{align*}
Taking supremum over $t\in[0,T]$ yields
$$\left\|S(q(0))-S(\tilde{q}(0))\right\|_{C([0,T]; H^{s-2})}=\left\|q_t-\tilde{q}_t\right\|_{C([0,T]; H^{s-2})}\lesssim \left\|\Phi(q(0))-\Phi(\tilde{q}(0))\right\|_{C([0,T]; H^s_+)}$$
which shows that the continuity of $S:H^s_+\to C([0,T]; H^{s-2}_+)$ will follow from the continuity of $\Phi:H^s\to C([0,T]; H^s_+)$.
\\\\
\noindent
To show $\Phi:H^s_+\to C([0,T]; H^s_+)$ is continuous, let $q^n(0)\to q(0)$ in $H^s_+$ where $s\geq 3$, fix $q^n=\Phi(q^n(0))$ and $q=\Phi(q(0))$ be the associated solutions of \eqref{CCM}. Denote $q_\eps$, $q_\eps^n$ is the solution of \eqref{CCM2} with the smoothed data $q(0)_\eps$ and $q_\eps^n(0)$, respectively. Following by the \textit{a-priori} estimate Theorem \ref{jul29_1403}
we have $\|q\|_{H^1}\lesssim 1$, and also $\left\|q_\eps-q_\delta\right\|_{H^{s-1}}\lesssim \eps$.
By Proposition~\ref{t:H2 conv}, we have
$$\left\|q_\delta^n-q_\eps^n\right\|_{H^s}\lesssim \eps^{\frac16}+(\left\|q^n(0)-q_\eps^n(0)\right\|_{H^s}+\left\|q^n(0)-q_\delta^n(0)\right\|_{H^s})$$
uniformly for $n\geq 1$.
Sending $\delta\to 0$, we have $q_\delta\to q$ in $C([0,T];H^s)$ and so it follows that
\begin{align*}
    \left\|q^n-q_\eps^n\right\|_{C([0,T]; H^s_+)}\lesssim \eps^{\frac16}+\|q^n(0)-q_\eps^n(0)\|_{H^s_+}
\end{align*}
uniformly for $n\geq 1$, and similarly
\begin{equation*}
    \|q-q_\eps\|_{C([0,T]; H^s_+)}\lesssim \varepsilon^{\frac16}+\|q(0)-q_\eps(0)\|_{H^s_+} .
\end{equation*}
%Since $q^n(0)$ converges in $H^s_+$, there is a constant $N>0$ such that $\|q^n(0)\|_{H^3_+}\leq N$ for all $n$. 
Therefore, we conclude that as $\eps\to 0$ we have 
$$\sup_{n\geq 1}\|q^n-q_\eps^n\|_{C([0,T]; H^s_+)}\to 0$$
and 
$$\|q-q_\eps\|_{C([0,T]; H^s_+)}\to 0.$$

Fix $\eta>0$.  By the previous paragraph, there exists a $\varepsilon$ such that $\|q^n-q_\eps^n\|_{C([0,T]; H^s_+)}\leq \frac{\eta}{3}$ for all $n\geq 1$ and $\|q_\eps-q\|_{C([0,T]; H^s_+)}\leq \frac{\eta}{3}$. For this $\eps$, we know that $\|q_\eps^n-q_\eps\|_{C([0,T]; H^s_+)}\to 0$ as $n\to\infty$, by corollary \ref{CCM_eps_gwp} we have the global well-posedness for \eqref{CCM2}. Thus, there exists a $M>0$ such that $\|q_\eps^n-q_\eps\|_{C([0,T]; H^s_+)}\leq \frac{\eta}{3}$ for all $n\geq M$. Thus, it follows that 
$$\left\|q-q^n\right\|_{C([0,T]; H^s_+)}\leq \left\|q-q_\eps\right\|_{C([0,T]; H^s_+)}+\left\|q_\eps-q_\eps^n\right\|_{C([0,T]; H^s_+)}+\left\|q_\eps^n-q^n\right\|_{C([0,T]; H^s_+)}\leq \eta.$$
The proof of the theorem is now complete.
\end{proof}

% \bib, bibdiv, biblist are defined by the amsrefs package.

\end{document}